\subjclass[2020]{46L30, 46L40}
\newcommand{\bh}{\mathcal{B}(\mathcal{H})}
\newcommand{\bk}{\mathcal{B}(\mathcal{K})}
\newcommand{\cx}{C(X)}
\newcommand{\ox}{\mathcal{O}(X)}
\newcommand{\be}{\begin{equation}}
	\newcommand{\ee}{\end{equation}}
\newcommand{\bea}{\begin{eqnarray}}
	\newcommand{\eea}{\end{eqnarray}}
\newcommand{\bean}{\begin{eqnarray*}}
	\newcommand{\eean}{\end{eqnarray*}}
\newcommand{\brray}{\begin{array}}
	\newcommand{\erray}{\end{array}}
\newtheorem{dfn}{Definition}[section]
\newtheorem{thm}[dfn]{Theorem}
\newtheorem{lmma}[dfn]{Lemma}
\newtheorem{ppsn}[dfn]{Proposition}
\newtheorem{crlre}[dfn]{Corollary}
\newtheorem{xmpl}[dfn]{Example}
\newtheorem{rmrk}[dfn]{Remark}
\theoremstyle{definition}\newtheorem*{notation*}{Notation}
\newcommand{\cst}{C^*}
\newcommand{\sqd}{D^{1/2}}
\newcommand{\bdfn}{\begin{dfn}\rm}
	\newcommand{\bthm}{\begin{thm}}
		\newcommand{\blmma}{\begin{lmma}}
			\newcommand{\bppsn}{\begin{ppsn}}
				\newcommand{\bcrlre}{\begin{crlre}}
					\newcommand{\bxmpl}{\begin{xmpl}}
						\newcommand{\brmrk}{\begin{rmrk}\rm}
							\newcommand{\edfn}{\end{dfn}}
						\newcommand{\ethm}{\end{thm}}
					\newcommand{\elmma}{\end{lmma}}
				\newcommand{\eppsn}{\end{ppsn}}
			\newcommand{\ecrlre}{\end{crlre}}
		\newcommand{\exmpl}{\end{xmpl}}
	\newcommand{\ermrk}{\end{rmrk}}
\newcommand{\bbc}{\mathbb{C}}
\newcommand{\scre}{\mathscr{E}}
\newcommand{\cla}{\mathcal{A}}
\newcommand{\clb}{\mathcal{B}}
\newcommand{\clh}{\mathcal{H}}
\newcommand{\cli}{\mathcal{I}}
\newcommand{\clj}{\mathcal{J}}
\newcommand{\clk}{\mathcal{K}}
\newcommand{\cll}{\mathcal{L}}
\newcommand{\clm}{\mathcal{M}}
\newcommand{\cln}{\mathcal{N}}
\newcommand{\clo}{\mathcal{O}}
\let\@wraptoccontribs\wraptoccontribs
\title[CP instruments]{Understanding Quantum Instruments Through the Analysis of $C^*$-Convexity and Their Marginals}
	\author{B. V. Rajarama Bhat}
\address{Indian Statistical Institute, Stat Math Unit, R V College Post, Bengaluru, 560059, India.}
\email{bvrajaramabhat@gmail.com, bhat@isibang.ac.in}
\author{Arghya Chongdar}
\address{Indian Statistical Institute, Stat Math Unit, R V College Post, Bengaluru, 560059, India.}
\email{chongdararghya@gmail.com}
\author{Sruthymurali}
\address{Department of Mathematical Sciences,
Kannur University, Mangattuparamba Campus, Kannur, Kerala, 670567, India.}
\email{sruthy92smk@gmail.com,  sruthymurali@kannuruniv.ac.in}
\date{}
\begin{document}
 \begin{abstract}
	Quantum instruments are mathematical devices introduced to describe the conditional state change during a quantum process. They are completely positive map valued measures on measurable spaces.  We may also view them  as non-commutative analogues of joint probability measures.   We analyze the $C^*$-convexity structure of spaces of quantum instruments.  A complete description of the $C^*$-extreme instruments in finite dimensions has been established. Further,  the implications of $C^*$-extremity  between quantum instruments and their marginals has been explored.
\end{abstract}
	\keywords{quantum instruments, completely positive maps, POVMs, quantum convexity}

	\subjclass[2020]{47A20, 46L53,  81P16, 81P47}
	\maketitle

	\section{Introduction}\label{Introduction}

 Davies and Lewis ~\cite{davies} introduced a  mathematical framework for quantum measurements.
 Their  key idea was to mathematically define an {\em ``instrument''}  to capture the complexities of quantum measurement processes. According to them it is a concept which generalises that of an observable and that of an operation. It is  particularly  relevant for situations involving continuous observables and repeated measurements. It could be interpreted in different ways \cite{Srinivas}.  A decisive advance in this theory was achieved by Ozawa~\cite{ozawa}, who says that the state changes determined by measuring processes naturally correspond to completely positive instruments and vice versa. In simple terms, quantum instruments or CP instruments are nothing but completely positive (CP) map valued measures. So they can be thought of as generalizations of both positive operator valued measures (POVMs) and CP maps.  From mathematical point of view, Ozawa  developed a dilation theory for CP instruments which became a cornerstone for subsequent progress. In particular, Holevo~\cite{holevo} established a Radon--Nikodym type theorem for CP instruments, offering deeper structural insights. This line of investigation was extended in~\cite{Chiribella_and_Ariano}. More recently, the marginal problems for CP instruments have attracted significant attention, as studied in~\cite{Ariano} and by Juha-Pekka Pellonp\"a\"a  et al  \cite{pellonapaa1, pellonapaa2, pellonpaapieces, Heinosaari_Teiko_and_Pellonpa}. 
 Though we use a somewhat different set-up and do not use the technical framework of direct integrals of Hilbert spaces, many of the results here are motivated by these articles.

 It is easy to see that the space of CP instruments on a measure space taking values in a fixed algebra form a convex set. In one way or the other most of these papers analyze this convexity.   The main goal of this article is to study the role of quantum convexity or $C^*$-convexity (See Definition \ref{cstarconvexity}) in understanding quantum instruments.  
 
	Here is another way of looking at the concept of quantum instruments. Building on the Riesz representation theorem on compact Hausdorff spaces, quantum states can be seen as non-commutative analogues of classical probability measures, while completely positive maps on $\cst$-algebras serve as generalized states. In this analogy, positive operator-valued measures (POVMs)—the quantum counterparts of classical measurements—are regarded as non-commutative generalizations of positive measures.
Following this line of reasoning, the CP instruments introduced by Ozawa can be viewed as a quantization of classical joint probability distributions, where the marginals consist of a CP map on a $\cst$-algebra and a POVM on a measurable space. However, one may also consider alternative quantizations of classical joint measures, where both marginals are either CP maps on $\cst$-algebras or POVMs on measurable spaces. From the perspective of quantum measurement theory, the concept of CP instruments with one marginal being a POVM (which could be called as semi-classical) and another being a CP map (purely quantum) has physical relevance. Mathematically, this structure is interesting as we get to see different features in two marginals.
		
Classically, a joint probability measure on a product space is an extreme point in the convex set of all joint measures if and only if it is the product of its marginals and each marginal is itself extreme. In other words, the extremality of the joint distribution is equivalent to the extremality of the marginals together with the product structure. In particular, the extremality of even one marginal suffices to uniquely determine the entire joint distribution. Thus, in the classical setting, the convex structure of the marginals plays a fundamental role in characterizing the joint distribution.

From a mathematical standpoint, it becomes natural to investigate the structure of CP instruments through their marginals, particularly through their convexity theory.	
In the setting of completely positive (CP) instruments, the classical characterization of extremality holds only in one direction: if both marginals of a CP instrument are extreme, then the instrument itself is extreme. This result was established by Haapasalo, Heinosaari, and Pellonpää (see Theorem 4.1 in \cite{pellonpaapieces}). Furthermore, they established that the extremality of even one of the marginals is sufficient to uniquely determine the instrument (Theorem 4.1, \cite{pellonpaapieces}). However, a product representation of an instrument in terms of its extreme marginals generally does not hold. These findings provide a partial analogue of the classical case in the quantum setting, while simultaneously revealing key structural differences arising from the inherent noncommutativity of quantum observables.
 However, the converse does not hold—that is, a CP instrument may be extreme even if its marginals are not extreme. A concrete example illustrating this phenomenon is discussed in Example \ref{exmp: extreme instrument with non-extreme marginals}.

These observations naturally motivate a re-investigation of such results through the lens of C$^*$-convexity. In this work, we have characterized the set of $C^*$-extreme points within the space of unital completely positive (CP) instruments. Furthermore, we have obtained a partial counterpart of the classical result: if the marginals of a CP instrument are $C^*$-extreme, then the instrument itself is decomposable (see Definition \ref{Decomposableinstruments})—that is, it coincides with the product of its marginals. In particular, the instrument is uniquely determined by its $C^*$-extreme marginals. However, the converse does not hold in general: the $C^*$-extremality of an instrument does not necessarily imply the $C^*$-extremality of its marginals, as demonstrated by Example \ref{exmp : $\cst$-extreme instrument wth non extreme CP marginal}.

The paper is organized as follows. In Section \ref{Basic properties of instruments}, we begin with the definition of instruments on measurable spaces and recall several foundational results, including the bi-dilation theorem and the Radon–Nikodym-type theorem.  We introduce the notion of pure instruments (Definition \ref{Pure instrument}) and characterize them via minimal bi-dilations. A brief discussion is included on sub-minimal dilations and their relation to the bi-dilation associated with a given instrument. We also revisit the concept of decomposable instruments (Definition \ref{Decomposableinstruments}) and provide a characterization in terms of minimal bi-dilation (Theorem \ref{decomposibleequivalent-1}).

In Section \ref{Extremality and $\cst$-Convexity of Instruments}, we establish a Choi-type characterization of extreme instruments in the finite dimensional setting (Corollary \ref{Choi-Kraus characterization of extreme instruments}). We also provide a characterization of extreme instruments using a natural partial order (Theorem \ref{thm: extreme point condition by Bhat}). Our main result—Theorem \ref{characterization of $\cst$-extreme instruments}—gives a structural characterization of $C^*$-extreme unital CP instruments on finite-dimensional Hilbert spaces. 

Next in Section \ref{Instrument and Its marginals through different notions of convexity}, we examine the relationship between an instrument and its marginals by analyzing classical and $C^*$-convexity. We recall the known facts for classical convexity and then derive some results for $C^*$-convexity.  We show that for both classically extreme and $C^*$-extreme instruments with commutative range, the POVM marginal is necessarily spectral. But in general, the two situations
are quite distinct. 
Our final result (Theorem \ref{Final}) tells us that
 the $C^*$-extremality of the marginals implies the $C^*$-extremality of the instrument and something more. 

    For a brief overview of completely positive maps and positive operator-valued measures, we refer the reader to \cite{arvesonsubalgebra, pisiernormal, daviesquantum} and the references therein. Throughout this paper, we use the abbreviations UCP (unital completely positive maps) and POVM (positive operator-valued measures). All our Hilbert spaces are complex with inner products anti-linear in the first variable. For any Hilbert space $\mathcal{H}$, $\bh $ denotes the algebra of all bounded operators on $\mathcal{H}$ and $I_{\clh }$ denotes the identity operator on $\clh .$  

	\section{Basic properties of instruments}\label{Basic properties of instruments}
	
	\subsection{Quantum instruments}
	Here, we review the definition of CP instruments within the framework of general $C^*$-algebras and note some of  their fundamental properties. See \cite{davies}, \cite{holevo}, \cite{ozawa}, \cite{pellonapaa1}, \cite{pellonapaa2}, and \cite{Srinivas} for general literature on the topic of quantum instruments.

    In the following unless otherwise stated, $X$ is a non-empty set and $\mathcal{O}(X)$ denotes a $\sigma$-algebra of subsets of $X.$ The pair $(X,\mathcal{O}(X))$ is called a {\em measurable space.} Whenever $X$ is a topological space, we will consider $\mathcal{O}(X)$ to be the Borel $\sigma$-algebra of $X.$ The topological spaces that are being considered are all Hausdorff. Suppose $\cla $ is a $C^*$-algebra and $\clh $ is a Hilbert space. Then the  set of all completely positive (CP) maps from $\cla$ to the algebra $\bh $ of all bounded operators on $\clh$ is denoted by $CP(\cla,\bh)$.
	To date, the existing theory has focused on CP instruments in von Neumann algebra setting, where the value space consists of normal CP maps. Here, perhaps for the first time, we are considering CP instruments in the broader context of general $C^*$-algebras. A suggestion that for physical reasons we should be looking at finitely additive measures, which amounts going beyond normal maps, was made already in \cite{Srinivas}.  Let us begin with some definitions.

	\begin{dfn}[Quantum instruments]\label{maindefinition}
		Let $(X,\ox)$ be a measurable space. Suppose $\mathcal{A}$ is a unital $C^*$-algebra and $\mathcal{H}$ is a Hilbert space. A  \emph{CP instrument} or \emph{quantum instrument} on $(X,\clo(X))$ with values in $CP(\cla,\bh)$ is a map $\cli: \ox\to CP(\cla,\bh)$ satisfying the following,  
		\begin{enumerate}
			\item $\cli(A)\in CP(\cla,\bh)~,\forall~A\in~\mathcal{O}(X)$;
			\item  For every $h,k\in\mathcal{H}$ and any $a\in\mathcal{A}$, the map $\cli_{a,h,k}:\mathcal{O}(X)\to\mathbb{C}$ defined  by
			\begin{equation}
				\cli_{a,h,k}(A)=\langle h,\cli(A)(a)k\rangle~~\text{for all } A\in\ox,\label{eq:notation for cli_a,h,k}
			\end{equation}
			is a complex measure.
		\end{enumerate}
		
		Moreover, a quantum instrument $\cli$ is said to be 
		\begin{itemize}
   			\item \emph{normalized} or \emph{unital} if $\cli(X)(1_\cla)=1_\mathcal{H}$ i.e. if the completely positive map $\cli(X)$ is \emph{unital}.
			\item \emph{normal} if $\cla$ is a von Neumann algebra and $\cli(A)$ is a  normal CP map for all $A\in\ox.$
			\item \emph{spectral} if $\cli(A)$ is a $*$-homomorphism for all $A\in \ox$ and $\cli $ is normalized.
		\end{itemize}
	\end{dfn}
	
	It follows from the definition of CP instruments that, for any increasing (or
	decreasing)  sequence $\{A_n\}$ of measurable  subsets converging to
	$A$ i.e. $A_n\subseteq A_{n+1}$ and $\cup_nA_n=A$ (or $A_n\supseteq
	A_{n+1}$ and  $\cap A_n=A$), $\cli(A_n)(a)\to \cli(A)(a)$ in weak operator
	topology (WOT) in $\bh,$ for all $a\in \mathcal{A}$. A bounded net, $\{\phi_i: \cla \to \bh\}_{i\in\cli}$ 
	of completely positive maps converges to a completely positive map, $\phi: \cla \to \bh$ in the \emph{bounded weak} (BW) topology if $\phi_i(a) \to \phi(a)~\text{in WOT},~\forall~a~\in \cla.$ Therefore, we infer that in the countable additivity of CP instruments:
	\begin{align*}
		\cli \left(\bigcup_{n=1}^{\infty} B_n\right) =\sum_{n=1}^\infty\cli (B_n),~~B_n\in
		\ox, B_n\cap B_m=\emptyset ~\mbox{for} ~ n\neq m,
	\end{align*} the convergence
	of the series of CP maps holds in BW topology.
	
    The following example shows that CP maps and POVMs can be regarded as special types of instruments.
    \begin{xmpl} \begin{enumerate} \item Let $\mu:\ox\to\bh$ be a POVM. Consider $\cla=\bbc.$ Define a map $\cli_{\mu}:\ox\to CP(\cla,\bh)$ by, $$\cli_{\mu}(A)(a)=a\mu(A),  ~\forall A\in\ox,~a\in\bbc.$$ This defines an instrument, showing that every POVM can be considered as a CP instrument.
			\item Similarly given a CP map $\phi:\cla\to\bh,$ consider the trivial measurable space $\ox=\{\emptyset,X\}$ for some non-empty set $X.$ Define the map $\cli_\phi:\ox\to CP(\cla,\bh)$ by $$\cli (\emptyset  )(a)=0, ~~ \cli_\phi(X)(a)=\phi(a),\forall~a\in\cla.$$ This shows that every CP map can be regarded as a CP instrument in  a natural way.
		\end{enumerate}

	\end{xmpl}
	\begin{rmrk}\label{complex measures associated to an instrument}
		For any instrument $\cli $, by $\cli _{a,h,k}$ we would mean the complex measure
		defined in \eqref{eq:notation for cli_a,h,k}. It is clear that a CP instrument
		$\cli $ is determined by its associated family of complex measures $\{ \cli
		_{a,h,k}: a\in\cla,h,k\in \clh \}.$
	\end{rmrk}

	\begin{rmrk}[Bivariate realization]\label{bivariate-realization}
		Observe that a CP instrument $\cli$ on $(X,\ox)$ with values in $CP(\cla,\bh)$ can be thought of as a bivariate map $\tilde{\cli}: \ox\times \cla \to \bh$ given by,
		\begin{equation*}
			\tilde{\cli}(A,a)=\cli(A)(a),~\forall a \in \cla, ~A \in \ox,
		\end{equation*}
		where, \begin{itemize}
			\item For each $A\in\ox,$ the map, $\tilde{\cli}(A,\cdot):\cla\to\bh$ defined by $\cli(A,a)=\cli(A)(a)$ gives a completely positive map;
			\item Fixing any positive element $a\in\cla,$ the map, $\tilde{\cli}(\cdot,a):\ox\to\bh$ given by $\cli(A,a)=\cli(A)(a)$ defines a positive operator valued measure on $X.$
		\end{itemize} 
		
		Conversely, if there exists a bivariate map $\tilde{\cli}:\ox\times\cla\to \bh$ such that: 
		\begin{enumerate}
			\item for each $A\in\ox,$ the map, $\tilde{\cli}(A,\cdot):\cla\to\bh$ is a completely positive map,
			\item for every $a\in\cla_+,(\cla_+,$ is the set of all positive elements in $\cla)$ the map, $\tilde{\cli}(\cdot,a):\ox\to\bh$ defines a positive operator valued measure on $X,$
		\end{enumerate}
		then we can construct a map $\cli:\ox\to CP(\cla,\bh)$ such that $\cli(A)(a)=\tilde{\cli}(A,a)$ for all $A\in\ox$ and $a\in\cla$ respectively.
		The fact that $\cli$ is a CP instrument follows directly from the definition of $\tilde{\cli}.$
	\end{rmrk}
	Based on Remark \ref{bivariate-realization}, CP instruments can be described unambiguously either in terms of the Definition \ref{maindefinition} or equivalently as a bivariate map, as outlined in Remark \ref{bivariate-realization}.
	\begin{rmrk}
		Bivariate realization of CP instruments implies that corresponding to every CP instrument $\cli:\clo(X)\to CP(\cla,\bh),$ we have a POVM, $\cli(\cdot,1_\cla): \clo(X)\to \bh$ and a CP map, $\cli(X,\cdot):\cla\to \bh$. Furthermore, if the quantum instrument is unital then $\cli(.,1_\cla)$ is a normalized POVM, and $\cli(X,.),$ is an UCP map. These are called respectively as {\em associated\/}  POVM and CP map of the CP instrument $\cli.$ 
	\end{rmrk}

	\begin{notation*}
		We use the symbols $\mu_\cli,\phi_\cli$ to denote the associated POVM and CP map of a quantum instrument $\cli.$ We also call them POVM marginal and CP marginal of the instrument $\cli,$ and collectively as \emph{marginals}.  We may write $\cli(A,a)$ to mean $\cli(A)(a).$  
		Let $Ins_{\clh}(X,\cla)$ denote the
		collection of all CP instruments on $\ox$ with values in $CP(\cla,\bh)$ and let
		$I_{\clh}(X,\cla)$ denote the collection of all normalized elements in
		$Ins_{\clh}(X,\cla).$
	\end{notation*}
	
	Analogous to classical joint measures, in the non-commutative setting, the entire instrument vanishes if and only if either of its marginals vanishes. The following result is elementary but will be used multiple times.
	
	\begin{ppsn}\label{zero set of marginals and the whole are same}
		Let, $\cli:\ox\to CP(\cla,\bh)$ ba a CP instrument. Then,  the following are equivalent:  (i) $\cli=0;$
			(ii)  $\mu_{\cli}=0;$  
			(iii) $\phi_{\cli}=0.$
		
	\end{ppsn}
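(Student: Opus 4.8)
The plan is to prove the cycle $(i)\Rightarrow(ii)\Rightarrow(i)$ together with $(iii)\Rightarrow(ii)$; since $(i)\Rightarrow(iii)$ is immediate from $\phi_{\cli}(a)=\cli(X)(a)$, this closes all the equivalences. Likewise $(i)\Rightarrow(ii)$ needs no comment, being immediate from $\mu_{\cli}(A)=\cli(A)(1_\cla)$. So the content is in the two arrows that recover $\cli$ (or $\mu_{\cli}$) from the vanishing of a marginal.

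For $(ii)\Rightarrow(i)$ I would fix $A\in\ox$ and use the elementary fact that a positive linear map annihilating the unit is identically zero. Indeed $\cli(A)\in CP(\cla,\bh)$ is in particular positive, hence monotone, so for $a\in\cla_+$ the inequality $0\le a\le\|a\|\,1_\cla$ gives $0\le\cli(A)(a)\le\|a\|\,\cli(A)(1_\cla)=\|a\|\,\mu_{\cli}(A)=0$; writing a general element of $\cla$ as a linear combination of four positive elements then yields $\cli(A)=0$, and since $A$ is arbitrary, $\cli=0$.

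For $(iii)\Rightarrow(ii)$ I would observe that $\phi_{\cli}=0$ forces $\mu_{\cli}(X)=\cli(X)(1_\cla)=\phi_{\cli}(1_\cla)=0$, and then invoke monotonicity (finite additivity plus positivity) of the POVM $\mu_{\cli}$: for every $A\in\ox$, $0\le\mu_{\cli}(A)\le\mu_{\cli}(A)+\mu_{\cli}(X\setminus A)=\mu_{\cli}(X)=0$, so $\mu_{\cli}=0$, and $(ii)\Rightarrow(i)$ finishes.

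I do not expect a genuine obstacle here: the statement is a repackaging of two standard lemmas (``a positive map killing the unit vanishes'' and ``a positive operator measure of zero total mass vanishes''). The one point worth flagging is that the argument uses positivity alone — not complete positivity, not normality of the $\cli(A)$, and not any continuity in the set variable — so it remains valid in the general unital $C^*$-algebra framework of Definition \ref{maindefinition}. As an alternative one can get $(iii)\Rightarrow(i)$ directly from the bivariate picture of Remark \ref{bivariate-realization}: for each $a\in\cla_+$ the assignment $A\mapsto\cli(A)(a)=\tilde{\cli}(A,a)$ is a positive operator-valued measure whose total mass $\tilde{\cli}(X,a)=\phi_{\cli}(a)$ is $0$, hence it vanishes identically, and then one extends in the variable $a$ by linearity.
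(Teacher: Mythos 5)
Your proof is correct and follows essentially the same route as the paper's: both reduce everything to the elementary facts that a positive map annihilating $1_\cla$ vanishes and that a POVM with zero total mass vanishes. The only cosmetic difference is that you apply the unit-killing argument to each $\cli(A)$ directly, while the paper passes through $\cli(X)(1_\cla)=0$ first; the substance is identical.
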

	\begin{proof}
		If the instrument $\cli = 0$, then by definition, its marginals satisfy $\phi_\cli = 0$ and $\mu_\cli = 0.$
		Conversely, if $\mu_{\cli}=0$ i.e. $\cli(A,1_{\cla})=0,$ for any $A\in\ox,$ in particular, $\cli(X,1_{\cla})=0,$ which implies that $\cli(A)=0,$ for all $A\in\ox.$
		Similarly, we can verify that if the CP marginal $\phi_{\cli}=0$ then the entire instrument $\cli=0.$
	\end{proof}

	\subsection{Bi-dilation theorem}
	The classical dilation theorems of Naimark \cite{neumark} and Stinespring \cite{stinespring} are foundational results in the theory of operator algebras, establishing that POVMs and completely positive (CP) maps can be dilated to spectral measures and $*$-homomorphisms, respectively. Ozawa  (\cite{ozawa}, Proposition 4.1)  proved a dilation theorem for CP instruments, demonstrating that such instruments can be dilated to spectral instruments (as characterized in \ref{characterization of spectral}). Ozawa called POVMs as semiobservables and projection valued measures as observables. Although his treatment focused on normal CP instruments on von Neumann algebras, the underlying arguments generalize seamlessly to quantum instruments on arbitrary $\cst$-algebras. Here, we present this dilation theorem in the general $\cst$-algebra setting along with a sketch of  proof.

	\begin{thm}\label{thm : Bi-dilation theorem} (Bi-dilation Theorem)
		Let $\cli: \ox \to CP(\cla,\bh)$ be an instrument. Then there exists a quadruple $(\clk,\pi,E,V),$ where $\clk$ is a Hilbert space, $\pi: \cla \to \bk$ is a unital $*$-homomorphism, $E: \ox \to \bk$ is a spectral measure, and $V\in\clb(\clh,\clk)$ such that, \begin{equation}\label{naimarkdilation}
			\cli(A)(a)=V^*\pi(a)E(A)V\quad\text{and}\quad E(A)\pi(a)=\pi(a)E(A),
			\forall a \in \cla, A \in \ox
		\end{equation}
		and satisfies the minimality condition: $[\pi(\cla)E(\ox)V\clh]=\clk.$ Such a quadruple is unique up to unitary equivalence.
	\end{thm}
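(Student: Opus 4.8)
The plan is to run a single joint GNS--Stinespring construction on the $*$-algebra generated by $\cla$ together with the indicator functions on $X$, so that the Stinespring dilation of the completely positive data and the Naimark dilation of the spectral data are produced simultaneously and the commutation relation $E(A)\pi(a)=\pi(a)E(A)$ falls out automatically from the commutativity of the two tensor factors.

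First I would let $S(\ox)$ be the unital commutative $*$-algebra of simple $\ox$-measurable scalar functions on $X$ and form the algebraic tensor product $*$-algebra $\clb_0:=\cla\odot S(\ox)$. The one combinatorial device used throughout is that every element of $\clb_0$ has a normal form $\sum_{l=1}^{m}b_l\otimes\chi_{P_l}$ for a single finite $\ox$-partition $\{P_1,\dots,P_m\}$ of $X$, obtained by refining the finitely many measurable sets appearing in any given expression. Define $\Phi_0:\clb_0\to\bh$ on elementary tensors by $\Phi_0(a\otimes\chi_A)=\cli(A)(a)$ and extend linearly; this is well defined thanks to finite additivity of $\cli$ (a consequence of the countable additivity noted just after Definition \ref{maindefinition}) and linearity of each $\cli(A)$. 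Writing $x_i=\sum_l b_{il}\otimes\chi_{P_l}$ in a common normal form and using $\chi_{P_l}\chi_{P_m}=\delta_{lm}\chi_{P_l}$, one gets $\Phi_0(x_i^*x_j)=\sum_l\cli(P_l)(b_{il}^*b_{jl})$; since each $\cli(P_l)$ is completely positive, each matrix $[\cli(P_l)(b_{il}^*b_{jl})]_{ij}$ is positive, hence so is $[\Phi_0(x_i^*x_j)]_{ij}$ in $M_n(\bh)$. This matrix positivity is exactly the input the GNS construction requires.

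Next I would build the quadruple. Equip $\clb_0\odot\clh$ with the sesquilinear form $\langle x\otimes h,\ y\otimes k\rangle:=\langle h,\Phi_0(x^*y)k\rangle$, positive semidefinite by the previous step; quotient by its null space and complete to get $\clk$, and set $Vh:=[1_{\clb_0}\otimes h]$, which is bounded with $\|Vh\|^2=\langle h,\cli(X)(1_\cla)h\rangle$ (an isometry precisely when $\cli$ is normalized). Define $\rho(b)[y\otimes h]:=[by\otimes h]$. Since $\clb_0$ is merely a $*$-algebra, boundedness of $\rho$ has to be checked directly: for $a\in\cla$ write $\|a\|^2 1_\cla-a^*a=c^*c$ in $\cla$, so that $\|a\|^2 1_{\clb_0}-(a\otimes1)^*(a\otimes1)=(c\otimes1)^*(c\otimes1)$ is sent by $\Phi_0$ to a positive operator, giving $\|\rho(a\otimes1)\|\le\|a\|$; and since $1\otimes\chi_A=(1\otimes\chi_A)^*(1\otimes\chi_A)$ and $1_{\clb_0}-1\otimes\chi_A=(1\otimes\chi_{X\setminus A})^*(1\otimes\chi_{X\setminus A})$, the operator $E(A):=\rho(1\otimes\chi_A)$ is a projection. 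Setting $\pi(a):=\rho(a\otimes1)$, the algebra relations in $\clb_0$ immediately give that $\pi$ is a unital $*$-homomorphism, that $E(X)=I_\clk$ and $E(A\cap B)=E(A)E(B)$ (from $\chi_A\chi_B=\chi_{A\cap B}$), and that $E(A)\pi(a)=\pi(a)E(A)$ (from $(1\otimes\chi_A)(a\otimes1)=(a\otimes1)(1\otimes\chi_A)$). Strong countable additivity of $E$ follows because each matrix coefficient $A\mapsto\langle[y\otimes h],E(A)[z\otimes k]\rangle$ is, by the normal form, a finite sum of complex measures associated with $\cli$, so $E$ is weakly countably additive on a dense set of uniformly bounded projections, hence strongly so. The dilation identity is then the computation $\langle Vk,\ V^*\pi(a)E(A)Vh\rangle=\langle[1_{\clb_0}\otimes k],\ [(a\otimes\chi_A)\otimes h]\rangle=\langle k,\cli(A)(a)h\rangle$, i.e. $\cli(A)(a)=V^*\pi(a)E(A)V$, and minimality holds because $[(\sum_l b_l\otimes\chi_{P_l})\otimes h]=\sum_l\pi(b_l)E(P_l)Vh$, so that $\pi(\cla)E(\ox)V\clh$ is dense in $\clk$. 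For uniqueness, one checks that for any minimal quadruple the inner product $\langle\pi(a_1)E(A_1)Vh_1,\ \pi(a_2)E(A_2)Vh_2\rangle$ equals $\langle h_1,\cli(A_1\cap A_2)(a_1^*a_2)h_2\rangle$ (using the dilation formula together with $E(A_1)E(A_2)=E(A_1\cap A_2)$ and the commutation relation), which depends only on $\cli$; hence for a second minimal quadruple $(\clk',\pi',E',V')$ the assignment $\pi(a)E(A)Vh\mapsto\pi'(a)E'(A)V'h$ extends to a well-defined unitary $\clk\to\clk'$ intertwining the two dilations. I expect the genuinely delicate points to be the bookkeeping with the common-refinement normal form — which underpins both the well-definedness and the matrix positivity of $\Phi_0$, and is where finite additivity of $\cli$ enters — and the hands-on boundedness argument for $\rho$, made necessary by the absence of a $C^*$-norm on $\cla\odot S(\ox)$ and hence of the usual automatic continuity of $*$-homomorphisms.
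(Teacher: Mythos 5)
Your proposal is correct and follows essentially the same route as the paper: both run a single GNS-type construction whose positivity rests on the common-refinement argument reducing everything to the complete positivity of the individual $\cli(P_l)$, and both obtain $\pi$, $E$, $V$, the commutation relation, minimality, and uniqueness in the standard way. The only cosmetic differences are that you package the data as the tensor $*$-algebra $\cla\odot S(\ox)$ and prove boundedness of the left-regular representation directly via the factorization $\|a\|^2 1_\cla-a^*a=c^*c$, whereas the paper works with a positive-definite kernel on triples $(A,a,h)$ and gets boundedness of $\pi$ by defining it first on unitaries.
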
   We call the quadruple $(\clk,\pi,E,V),$ the {\em minimal bi-dilation quadruple\/}  for the instrument $\cli.$ Since $\pi$ is a unital $*$-homomorphism and $E$ is spectral, the equation \ref{naimarkdilation} implies that $V$ is an isometry if and only if $\cli$ is a \emph{normalized} instrument. 
	
	The proof of this theorem primarily involves nothing more than the standard GNS construction with some bookkeeping. However, for clarity, we provide a brief outline of the proof.
	
	\begin{proof}
		Consider the product space, $\clm=\ox\times \cla\times\clh$ and define the map $K:\clm\times\clm\to\bbc$ by $K((A,a,h),(B,b,k))=\langle h,\cli(A\cap B)(a^*b)k \rangle.$ For arbitrary $(A_1,a_1,h_1),\cdots,(A_n,a_n,h_n)\in\clm$ and $c_1,\cdots,c_n\in\bbc,$
		\begin{equation*}
			\begin{split}
				\sum_{i,j} \overline{c_i}c_jK((A_i,a_i,h_i),(A_j,a_j,h_j))&=\sum_{i,j}\overline{c_i}c_j \langle h_i,\cli(A_i\cap A_j)(a_i^*a_j)h_j\rangle\\
				&=\langle  (\oplus_{i=1}^n c_ih_i),  [\cli(A_i\cap A_j)(a_i^*a_j)] (\oplus_{j=1}^n c_jh_j)  \rangle
			\end{split}
		\end{equation*}
		where the last inner product is in the direct sum of n-copies of the Hilbert space $\clh.$ Using standard measure theoretic arguments, note that each element from the collection $\{A_1,\cdots,A_n\}$ can be written as a disjoint union of a new collection of disjoint measurable subsets $\{B_1,\cdots,B_k\}.$ Consequently, 
		\begin{equation*}
			\sum_{i,j}\overline{c_i}c_j \langle h_i,\cli(A_i\cap A_j)(a_i^*a_j)h_j\rangle=\sum_{l}\sum_{i,j\in \{r:B_l \subseteq A_r\}}\overline{c_i}c_j \langle h_i,\cli(B_l)(a_i^*a_j)h_j\rangle.
		\end{equation*}
		Due to the complete positivity of the CP instrument $\cli$, the second summand in the above equation is positive. Hence $K$ is a positive definite kernel. By GNS construction we have a Hilbert space $\clk$ with a map $\lambda:\clm\to\clk$ such that, 
		\begin{equation*}
			\langle \lambda(A,a,h),\lambda(B,b,k)\rangle=\langle h, \cli(A\cap B)(a^*b)k\rangle, ~for~ (A,a,h),(B,b,k)\in\clm,
		\end{equation*}
		and the set $\{\lambda(A,a,h):(A,a,h)\in\clm\}$ forms a total set in $\clk.$ For each unitary $u\in\cla,$ we define $\pi(u)$ by $\pi(u)(\lambda(A,a,h))=\lambda(A,ua,h).$ Then it's easy to verify that  $\pi(u)$ is a unitary operator on $\clk.$ By extending $\pi$ linearly via the functional calculus, we obtain a unital $*$-homomorphism from $\cla$ to $\bk.$
		Similarly, define the map, $E:\ox\to\bk$ by $E(B)(\lambda(A,a,h))=\lambda(A\cap B,a,h),$ for each $B\in\ox.$ One can verify that $E$ is a spectral measure on the measurable space $(X,\ox).$ It follows directly from the definitions of the maps $\pi,E$ that $\pi(a)E(A)=E(A)\pi(a)$ for each $a\in\cla$ and $A\in\ox.$ Finally, define a linear map $V:\clh\to\clk$ by $Vh=\lambda(X,1_{\cla},h).$ Routine verification shows that $V$ is bounded, in fact $\|V\|=\|\cli(X)(1_{\cla})\|^{\frac{1}{2}},$ and the desired dilation is given by $\cli(A)(a)=V^*E(A)\pi(a)V.$ The uniqueness follows by standard methods.
	\end{proof}
	
	As a consequence of Theorem \ref{thm : Bi-dilation theorem}, every CP instrument on a finite set, taking values in CP maps on matrix algebras, admits a Choi–Kraus representation analogous to that of CP maps on matrix algebras. Further, we can characterize minimal {\em Choi-Kraus} decompositions in terms of linear independence. In the following, for notational simplicity,  for any singleton $\{i\}$, we denote $\cli(\{i\}, \cdot )$ by $\cli (i, \cdot ).$

	\begin{ppsn}\label{Choi-Kraus}
		Let $\cli:\ox\to CP(M_d, M_k)$ be a CP instrument where $X=\{1,\cdots,n\}$ is   a finite set.   Then there exists a collection of matrices $\{ V^i_{j}\}_{j=1}^{l_i} \subset M_{k \times d}$ for some $l_i\in \mathbb{N}$, for each $i \in \{1, \ldots, n\}$ such that,
    \begin{equation}\label{choi-Kraus}\cli(i,a)=\sum _j{V^i_{j}}^*aV^i_{j},~\forall~a \in M_d.\end{equation}

Moreover,  $\{V^i_j\}_{j=1}^{l_i}$ can be chosen to be linearly independent for each $i\in\{1,\cdots,n\}$.
	\end{ppsn}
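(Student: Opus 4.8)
The plan is to obtain the Choi--Kraus form as a corollary of the Bi-dilation Theorem (Theorem~\ref{thm : Bi-dilation theorem}) together with the elementary representation theory of a full matrix algebra. First I would apply Theorem~\ref{thm : Bi-dilation theorem} to $\cli$, producing the minimal bi-dilation quadruple $(\clk,\pi,E,V)$, where $\pi:M_d\to\bk$ is a unital $*$-homomorphism, $E:\ox\to\bk$ is a spectral measure commuting with $\pi(M_d)$, $V\in\clb(\bbc^k,\clk)$, and $\cli(A)(a)=V^*\pi(a)E(A)V$. The first point to record is that $\clk$ is finite dimensional: by minimality $\clk=[\pi(M_d)\,E(\ox)\,V\bbc^k]$, and since $X$ is finite one has $E(A)=\sum_{i\in A}E(\{i\})$, so the closed linear span above is in fact the linear span of the finitely many vectors $\pi(a_r)\,E(\{i\})\,Vh_s$, where $\{a_r\}$ is a basis of $M_d$, $i$ runs over $X$, and $\{h_s\}$ is a basis of $\bbc^k$.

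Next I would invoke the structure of unital $*$-representations of $M_d$ on a finite dimensional space: up to a unitary identification, $\clk\cong\bbc^d\otimes\bbc^m$ with $\pi(a)=a\otimes I_m$ for $m=(\dim\clk)/d$. Since each $E(\{i\})$ lies in the commutant $\pi(M_d)'=I_d\otimes M_m$, we may write $E(\{i\})=I_d\otimes P_i$; and because $E$ is a spectral measure on the finite set $X$, the operators $P_i$ are mutually orthogonal projections in $M_m$ with $\sum_{i=1}^n P_i=I_m$ (using $E(X)=I_{\clk}$). Substituting yields $\cli(i,a)=V^*\pi(a)E(\{i\})V=V^*(a\otimes P_i)V$. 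Choosing an orthonormal basis $\{e^i_j\}_{j=1}^{r_i}$ of $\ran P_i$ and setting $V^i_j:=(I_d\otimes\langle e^i_j|)\,V\in M_{k\times d}$, the identity $a\otimes P_i=\sum_j (I_d\otimes|e^i_j\rangle)\,a\,(I_d\otimes\langle e^i_j|)$ gives $\cli(i,a)=\sum_j (V^i_j)^*\,a\,V^i_j$, which is exactly \eqref{choi-Kraus}.

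For the last assertion I would argue index by index: since the collections $\{V^i_j\}_j$ attached to distinct $i$ do not interact, it suffices to make each of them linearly independent. The cleanest route is to observe that $\cli(i,\cdot)=\cli(\{i\})$ is a completely positive map from $M_d$ to $M_k$ (Definition~\ref{maindefinition}(1)) and to invoke the standard minimality theory of Choi--Kraus decompositions of CP maps on matrix algebras --- namely that the minimal number of Kraus operators equals the rank of the associated Choi matrix and is attained by a linearly independent family --- so that $\{V^i_j\}_{j=1}^{r_i}$ may be replaced by a linearly independent family $\{V^i_j\}_{j=1}^{l_i}$ with $l_i\le r_i$ giving the same map. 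Alternatively, linear independence can be extracted intrinsically from the minimality of the bi-dilation: unravelling $[\pi(M_d)E(\ox)V\bbc^k]=\clk$ in the tensor picture and using that $M_d$ acts irreducibly on $\bbc^d$, one finds that $\mathrm{span}\{V^i_j\}_j$ is forced to have dimension $r_i$, so the $V^i_j$ were already independent. There is no serious obstacle here; the only steps requiring a little care are the finite dimensionality of $\clk$ (so that the representation theory of $M_d$ applies and $\pi,E$ can be simultaneously brought into tensor form) and the observation that the pruning to linearly independent operators is performed separately for each $i$ and hence without conflict.
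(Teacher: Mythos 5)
Your proof is correct and follows exactly the route the paper indicates (the paper states the proposition as a consequence of Theorem~\ref{thm : Bi-dilation theorem} without writing out the details, and your argument---finite dimensionality of $\clk$, the identification $\pi(a)=a\otimes I_m$, $E(\{i\})=I_d\otimes P_i$, and the extraction of Kraus operators from an orthonormal basis of $\mathrm{ran}\,P_i$---is the intended fleshing-out). Your closing observation is also apt: since the statement only concerns each $\cli(\{i\})$ separately as a CP map from $M_d$ to $M_k$, both the existence and the linear-independence refinement already follow from Choi's theorem applied index by index, and your verification that minimality of the bi-dilation forces the $V^i_j$ to be independent is a correct bonus.
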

	\begin{rmrk}
We call the representation in \ref{choi-Kraus} as the Choi-Kraus decomposition. We refer  the  collection of matrices $\{V^i_j\}_{j=1}^{l_i}$ as the {\em Choi-Kraus} operators associated to the instrument $\cli.$ If they are linearly independent, the decomposition is said to be minimal and then  for any other decomposition $$\cli(i,a)=\sum_k {W^i}^*_{k}aW^i_{k},~~\forall ~a\in M_d,$$
		there exist a family of isometries $(\mu^i_{k,j})$ such that, $$W^i_{k}=\sum_j \mu^i_{k,j}V^i_{j}.$$
	\end{rmrk}

	Another important implication of the dilation theorem is that spectral instruments admit a canonical factorization as the composition of a spectral measure and a unital $*$-homomorphism.

	\begin{thm}\label{characterization of spectral}
		Let $\cli:\ox\to CP(\cla,\bh)$ be a UCP instrument. Then TFAE:
		\begin{enumerate}[label=(\roman*)]
			\item \label{usual representation}$\cli$ is a spectral instrument,
			\item \label{product representation}$\cli(A,a)=\phi_\cli(a)\mu_\cli(A)=\mu_\cli(A)\phi_\cli(a)$ for all $a\in\cla,$~ $A\in\ox,$ where $\phi_\cli:\cla\to\bh$ is a unital $*$-homomorphism and $\mu_\cli:\ox\to\bh$ is a spectral measure.
		\end{enumerate}
        \end{thm}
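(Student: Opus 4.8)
The plan is to establish the two implications separately: $(ii)\Rightarrow(i)$ is a short direct verification, while $(i)\Rightarrow(ii)$ rests on the Bi-dilation Theorem (Theorem \ref{thm : Bi-dilation theorem}).

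For $(ii)\Rightarrow(i)$, I would assume the product formula with $\phi_\cli$ a unital $*$-homomorphism and $\mu_\cli$ a spectral measure, fix $A\in\ox$, and check that $\cli(A)$ is multiplicative and $*$-preserving. Using $\mu_\cli(A)^2=\mu_\cli(A)=\mu_\cli(A)^*$ together with $\mu_\cli(A)\phi_\cli(a)=\phi_\cli(a)\mu_\cli(A)$, one gets $\cli(A)(a)\cli(A)(b)=\phi_\cli(a)\mu_\cli(A)^2\phi_\cli(b)=\phi_\cli(ab)\mu_\cli(A)=\cli(A)(ab)$ and $\cli(A)(a)^*=\mu_\cli(A)\phi_\cli(a^*)=\cli(A)(a^*)$; normalization follows from $\cli(X)(1_\cla)=\phi_\cli(1_\cla)\mu_\cli(X)=I_{\clh}$. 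Hence $\cli$ is spectral.

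For $(i)\Rightarrow(ii)$, let $(\clk,\pi,E,V)$ be the minimal bi-dilation quadruple of $\cli$; since $\cli$ is normalized, $V$ is an isometry, so $VV^*$ is a projection in $\bk$. The heart of the argument is to show that $V$ is in fact \emph{unitary}. Fix $a\in\cla$, $A\in\ox$ and put $W:=\pi(a)E(A)V\in\clb(\clh,\clk)$. Since $E(A)$ is a projection commuting with $\pi(a)$ and $\pi(a)^*$, \eqref{naimarkdilation} gives $\cli(A)(a^*a)=V^*\pi(a^*a)E(A)V=V^*(\pi(a)E(A))^*(\pi(a)E(A))V=W^*W$. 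On the other hand $\cli(A)(a)=V^*W$, so, as $\cli(A)$ is a $*$-homomorphism, $\cli(A)(a^*a)=\cli(A)(a)^*\cli(A)(a)=W^*VV^*W$. Comparing the two expressions yields $W^*(I_{\clk}-VV^*)W=0$, and positivity of $I_{\clk}-VV^*$ forces $(I_{\clk}-VV^*)W=0$, i.e. $(I_{\clk}-VV^*)\pi(a)E(A)Vh=0$ for every $h\in\clh$. Letting $a$ range over $\cla$ and $A$ over $\ox$, these vectors span a dense subspace of $\clk$ by the minimality condition $[\pi(\cla)E(\ox)V\clh]=\clk$; therefore $VV^*=I_{\clk}$ and $V$ is unitary.

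Once $V$ is known to be unitary, I would transport the dilation back to $\clh$: set $\phi_\cli(a)=V^*\pi(a)V$ and $\mu_\cli(A)=V^*E(A)V$, which are exactly the CP and POVM marginals of $\cli$ (evaluate \eqref{naimarkdilation} at $A=X$ and at $a=1_\cla$). Unitarity of $V$ makes $\phi_\cli$ a unital $*$-homomorphism and makes $\mu_\cli$ projection-valued, hence a spectral measure, since $\mu_\cli(A)\mu_\cli(B)=V^*E(A)E(B)V=V^*E(A\cap B)V$. Finally $\cli(A)(a)=V^*\pi(a)E(A)V=(V^*\pi(a)V)(V^*E(A)V)=\phi_\cli(a)\mu_\cli(A)$, and the reversed product $\mu_\cli(A)\phi_\cli(a)$ follows from $\pi(a)E(A)=E(A)\pi(a)$; this is $(ii)$. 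The two algebraic verifications are routine; the one genuinely substantive step is the unitarity of $V$, where the multiplicativity of each $\cli(A)$ is converted into the operator identity $W^*(I_{\clk}-VV^*)W=0$ and the minimality of the bi-dilation is precisely what upgrades "$I_{\clk}-VV^*$ annihilates a total set" to "$V$ is surjective." I do not anticipate any obstacle beyond organizing this computation cleanly.
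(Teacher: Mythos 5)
Your proof is correct. The direction $(ii)\Rightarrow(i)$ is the same routine verification the paper dismisses as obvious. For $(i)\Rightarrow(ii)$ both you and the paper work from the minimal bi-dilation $(\clk,\pi,E,V)$, but the pivotal computation differs. You exploit multiplicativity of each $\cli(A)$ on elements $a^*a$ to get $W^*(I_{\clk}-VV^*)W=0$ for $W=\pi(a)E(A)V$, and then invoke the minimality condition $[\pi(\cla)E(\ox)V\clh]=\clk$ to conclude that $V$ is \emph{unitary}; the factorization $\cli(A,a)=\phi_\cli(a)\mu_\cli(A)$ then drops out by inserting $VV^*=I_\clk$. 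The paper instead only uses that each $\mu_\cli(A)=\cli(A)(1_\cla)$ is a projection to show directly that $\bigl[V\mu_\cli(A)-E(A)V\bigr]^*\bigl[V\mu_\cli(A)-E(A)V\bigr]=\mu_\cli(A)^2-\mu_\cli(A)=0$, i.e.\ the intertwining relation $V\mu_\cli(A)=E(A)V$, from which the product formula follows; notably this computation does not use minimality at all. Your route yields the stronger structural fact that the minimal bi-dilation of a spectral instrument is trivial (the dilation space collapses to $\clh$), at the cost of needing minimality; the paper's route is marginally more economical and applies to any bi-dilation, minimal or not. Both arguments are valid and of comparable length.
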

		\begin{proof}
			The proof,$\ref{product representation}\implies \ref{usual representation}$ is obvious.
			For the converse, let $(\clk,\pi,E,V)$ be the minimal bi-dilation tuple of $\cli.$ As $\cli$ is unital, it follows that $V$ is an isometry. Since, $\cli$ is spectral, both the associated POVM, $\mu_{\cli}$ and the UCP map, $\phi_{\cli}$ are spectral, with $\mu_{\cli}$ being a spectral measure and $\phi_{\cli}$ a $*$-homomorphism respectively. Then for any $A\in\ox$ we have,  
			\begin{equation*}
				\begin{split}
					[V\mu_{\cli}(A)-E(A)V]^*\cdot[V\mu_{\cli}(A)-E(A)V]
					&= [\mu_{\cli}(A)V^*-V^*E(A)]\cdot[V\mu_{\cli}(A)-E(A)V] \\
					& =\mu_{\cli}(A)^2-\mu_{\cli}(A)^2-\mu_{\cli}(A)^2+\mu_{\cli}(A)\\
					&    = \mu_{\cli}(A)^2-\mu_{\cli}(A)\\
                   & =0.
				\end{split}
			\end{equation*}
			So we obtain $V\mu_{\cli}(A)=E(A)V.$ As a result, for all $a\in\cla$ and $A\in\ox,$ we have $$\cli(A,a)=V^*\pi(a)E(A)V=V^*\pi(a)V\mu_{\cli}(A)=\phi_{\cli}(a)\mu_{\cli}(A).$$\end{proof}
	\begin{rmrk}
		We may sometimes use \ref{product representation} of Theorem \ref{characterization of spectral} as the definition of a spectral instrument for convenience. Hereafter, we use the notation $\pi E$ to denote a spectral instrument on its respective domain and range:
        $$\pi E(A, a)=\pi(a)E(A), ~A\in \ox, a\in \cla .$$
\end{rmrk}
	\begin{dfn}[Irreducible instruments]
		A spectral instrument $\pi E:\ox\to CP(\cla,\bh)$ is said to be {\em irreducible}  if $\pi E$ has no proper invariant subspace. This is equivalent to requiring  $\{\pi E\}':= \{\pi (a)E(A): a\in \mathcal{A}, A\in \ox \}' =\mathbb{C}I$.
		
	\end{dfn}
	\begin{rmrk}\label{commutnt of spectral instruments are intersection of commutants of its marginals}
		For any spectral instrument $\pi E:\ox\to CP(\cla,\bh),$ one can check that $$\{\pi(\cla)E(\ox)\}'=\{\pi(\cla)\}'\cap \{E(\ox)\}'.$$
	\end{rmrk}
	\begin{ppsn}
		Let $\pi E:\ox \to CP(\cla,\bh)$ be a spectral instrument. Then TFAE,
		\begin{enumerate}[label=(\roman*)]
			\item $\pi E$ is irreducible;
			\item $\pi:\cla \to \bh$ is an irreducible representation of the $C^*$-algebra $\cla.$
		\end{enumerate}
		Furthermore,  in such a case, the associated spectral measure $E:\ox \to \bh$ is trivial, i.e., $E(\ox) \subseteq \{0, I_{\clh}\}$.
		
	\end{ppsn}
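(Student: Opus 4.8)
The plan is to reduce the equivalence to the commutant identity recorded in Remark~\ref{commutnt of spectral instruments are intersection of commutants of its marginals}, namely $\{\pi(\cla)E(\ox)\}'=\{\pi(\cla)\}'\cap\{E(\ox)\}'$, and then to extract a single auxiliary fact. Put $\clm:=\{\pi(\cla)\}'$. Since $\pi E$ is spectral, $\pi(a)E(A)=E(A)\pi(a)$ for all $a\in\cla$, $A\in\ox$, which is precisely the assertion that every spectral projection $E(A)$ lies in $\clm$. The one genuinely substantive point is the following observation: \emph{if the projections of a spectral measure $E:\ox\to\bh$ all lie in a von Neumann algebra $\clm$ and $\clm\cap\{E(\ox)\}'=\mathbb{C}I$, then $E$ is trivial and $\clm=\mathbb{C}I$.} I would prove this by letting $\cln$ be the von Neumann algebra generated by $\{E(A):A\in\ox\}$: it is abelian because the $E(A)$ commute pairwise, it is contained in $\clm$, and $\cln\subseteq\cln'=\{E(\ox)\}'$ (using $S'=S'''$ for the self-adjoint set $S=\{E(A):A\in\ox\}$); hence $\cln\subseteq\clm\cap\{E(\ox)\}'=\mathbb{C}I$, so each projection $E(A)\in\{0,I_\clh\}$. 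Once $E$ is trivial, $\{E(\ox)\}'=\bh$, and therefore $\clm=\clm\cap\{E(\ox)\}'=\mathbb{C}I$.

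Granting this, both implications and the final assertion fall out immediately. For (i)$\Rightarrow$(ii): irreducibility of $\pi E$ means $\{\pi(\cla)\}'\cap\{E(\ox)\}'=\mathbb{C}I$ by the Remark, so applying the observation with this $\clm$ yields $\{\pi(\cla)\}'=\mathbb{C}I$, i.e.\ $\pi$ is an irreducible representation, together with the triviality of $E$. For (ii)$\Rightarrow$(i): if $\pi$ is irreducible then each projection $E(A)\in\{\pi(\cla)\}'=\mathbb{C}I$, so $E$ is trivial, whence $\{\pi(\cla)\}'\cap\{E(\ox)\}'=\mathbb{C}I\cap\bh=\mathbb{C}I$ and the Remark gives $\{\pi E\}'=\mathbb{C}I$. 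In either direction the triviality of $E$ has already been produced, so the ``furthermore'' part requires no separate argument.

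I do not anticipate a real obstacle here; the only thing to watch is the commutant bookkeeping in the auxiliary observation — specifically that the commutant of the generated von Neumann algebra $\cln$ coincides with $\{E(\ox)\}'$, and that $E(A)\in\{\pi(\cla)\}'$ is exactly the defining commutation relation of a spectral instrument. Both are standard, and once they are in place the remainder is purely formal.
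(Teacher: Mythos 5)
Your proposal is correct and follows essentially the same route as the paper: both rest on the commutant identity $\{\pi(\cla)E(\ox)\}'=\{\pi(\cla)\}'\cap\{E(\ox)\}'$ together with the observation that the projections $E(A)$ themselves lie in this commutant, which forces $E$ to be trivial and then collapses $\{\pi(\cla)\}'$ to $\mathbb{C}I$. Your detour through the generated von Neumann algebra $\cln$ and the double commutant is sound but slightly heavier than needed — the paper simply notes that $E(\ox)\subseteq\{\pi E\}'=\mathbb{C}I$ and that $\mathbb{C}I$ contains only trivial projections.
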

	\begin{proof}
		It follows from Remark \ref{commutnt of spectral instruments are intersection of commutants of its marginals}, that (ii) implies (i).
		To prove (i) $\implies$ (ii), we need to show that ${\pi(\cla)}' = \bbc I_\clh$. We will show that $E(\ox) \subseteq \{0, I_{\clh}\}$. This combined with Remark \ref{commutnt of spectral instruments are intersection of commutants of its marginals}, yields the desired result. Assume that there exists a proper projection $P\in E(\ox).$ Since $E$ is a spectral measure, every element in $E(\ox)$ is a projection, which are mutually orthogonal and they commute with unital $*$-homomorphism $\pi,$ i.e., $E(\ox)\subseteq E(\ox)'\cap\pi(\cla)' =\{\pi(\cla)E(\ox)\}'.$ However, by hypothesis, we have  $\{\pi(\cla)E(\ox)\}'=\bbc I_\clh ,$ which has only trivial projections.  As a consequence  $E(\ox) \subseteq \{0, I_{\clh}\}.$  
	\end{proof}
	\begin{dfn}
		A CP instrument $\cli$ is {\em concentrated}  on a measurable subset $S$ if $\cli(A)=\cli(A\cap S)$ for all $A\in\ox$.
	\end{dfn}

	\begin{ppsn}\label{zero sets of cli and piE are same}
		Let $\cli:\ox\to CP(\cla,\bh)$ be a CP instrument with the  minimal bi-dilation tuple $(\clk,\pi,E,V).$ Then for any $A\in\ox,~ \cli(A)=0$ if and only if $(\pi E)(A)=0.$ In particular, $\cli $ is concentrated on $S\in \ox$ if and only if $\pi E $ is concentrated on $S.$
	\end{ppsn}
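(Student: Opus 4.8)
\emph{Proof proposal.} The plan is to push everything through the minimal bi-dilation $(\clk,\pi,E,V)$ of $\cli$, using the identity $\cli(A)(a)=V^*\pi(a)E(A)V$, the commutation $E(A)\pi(a)=\pi(a)E(A)$, the fact that $E$ is a spectral measure (so $E(A)E(B)=E(A\cap B)$ and each $E(A)$ is a projection), and the minimality $[\pi(\cla)E(\ox)V\clh]=\clk$. First note that $(\pi E)(A)=0$ means precisely $\pi(a)E(A)=0$ for all $a\in\cla$, which (take $a=1_\cla$) is the same as $E(A)=0$; so one implication is immediate: if $(\pi E)(A)=0$ then $\cli(A)(a)=V^*\pi(a)E(A)V=0$ for every $a$, hence $\cli(A)=0$.

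For the converse, assume $\cli(A)=0$. I would first record the elementary monotonicity: for $c\in\cla_+$ and $A'\subseteq A$, countable additivity and positivity of the values of $\cli$ give $0\le\cli(A')(c)\le\cli(A)(c)$, so in particular $\cli(A\cap B)(b^*b)=0$ for every $b\in\cla$ and every $B\in\ox$. Then, for $b\in\cla$, $B\in\ox$, $h\in\clh$, I would compute
\[
\|E(A)\pi(b)E(B)Vh\|^2=\|\pi(b)E(A\cap B)Vh\|^2=\langle h,\cli(A\cap B)(b^*b)h\rangle ,
\]
where the first equality uses $E(A)\pi(b)=\pi(b)E(A)$ and $E(A)E(B)=E(A\cap B)$, and the second uses $\pi(b)^*=\pi(b^*)$, multiplicativity of $\pi$, the identity $E(A\cap B)\pi(b^*b)E(A\cap B)=\pi(b^*b)E(A\cap B)$ (a projection commuting with $\pi$), and finally $V^*\pi(b^*b)E(A\cap B)V=\cli(A\cap B)(b^*b)$. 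By the monotonicity remark the right-hand side vanishes, so $E(A)$ annihilates every vector of the total set $\{\pi(b)E(B)Vh\}$, whence $E(A)=0$ and therefore $(\pi E)(A)=0$.

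For the ``in particular'' clause I would simply observe that $A=(A\cap S)\sqcup(A\setminus S)$, so additivity gives $\cli(A)=\cli(A\cap S)+\cli(A\setminus S)$; hence $\cli$ is concentrated on $S$ iff $\cli(A\setminus S)=0$ for all $A\in\ox$, and identically $\pi E$ is concentrated on $S$ iff $(\pi E)(A\setminus S)=0$ for all $A\in\ox$. The first part of the proposition, applied with $A$ replaced by $A\setminus S$, converts one condition into the other.

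The only step that requires any thought is the middle paragraph: choosing the quantity $\|E(A)\pi(b)E(B)Vh\|^2$ to estimate and recognizing that it collapses exactly to $\langle h,\cli(A\cap B)(b^*b)h\rangle$. Once that is in place, the rest is routine bookkeeping with the dilation relations and the totality of $\pi(\cla)E(\ox)V\clh$ in $\clk$.
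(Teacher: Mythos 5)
Your proposal is correct and follows essentially the same route as the paper: the key computation $\|E(A)\pi(b)E(B)Vh\|^2=\langle h,\cli(A\cap B)(b^*b)h\rangle\le\langle h,\cli(A)(b^*b)h\rangle=0$ together with totality of $\pi(\cla)E(\ox)V\clh$ is exactly the paper's argument (phrased there via $\langle\xi,E(A)\xi\rangle$ rather than $\|E(A)\xi\|^2$, which is the same thing for a projection). The reduction of the ``concentrated'' clause to the first part via $A=(A\cap S)\sqcup(A\setminus S)$ is likewise what the paper intends.
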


	\begin{proof}
		The second assertion follows from the first one. So, it is enough to prove the first one.
		Let $\cli(A)=0.$ Then for any $a\in\cla,~B\in\ox$ and $h\in\clh,$ we get,
		\begin{equation*}
			\begin{split}
				\langle \pi(a)E(B)Vh, (\pi E)(A)(1_{\cla}) \pi(a)E(B)Vh)\rangle &=\langle h,V^*\pi(a^*)E(A\cap B)\pi(a)E(B)Vh)\rangle\\
				&=\langle h,\cli(A\cap B)(a^*a)h \rangle\\&\leq\langle h,\cli(A)(a^*a) h \rangle=0.
			\end{split}
		\end{equation*}
		Since $\{\pi(a)E(B)Vh:a\in\cla,B\in \ox ,,h\in\clh\}$ is a total set in $\clk,$ by the minimality condition, we conclude that $(\pi E)(A)(1_\cla)=0.$ Hence, we have  $\pi E(A)(a)=0, ~\forall a\in \cla .$ The converse is obvious. 
	\end{proof}
	\subsection{Radon-Nikodym type theorem}
	In classical measure theory, the existence and uniqueness of a Radon-Nikodym derivative of a ($\sigma$-finite) positive measure absolutely continuous with respect to another ($\sigma$-finite) positive measure, is a well-established result. In the case of POVMS (Theorem 2.8, \cite{manishcmp}) and CP maps (Theorem 1.4.2, \cite{arvesonsubalgebra}) the notion of Radon-Nikodym derivative has been introduced using the partial order defined by domination.
	An analogous  Radon-Nikodym type of theorem exists for quantum instruments as well. For two quantum instruments $\cli, \clj$, we  say $\clj $ is {\em dominated\/} by $\cli $ (denoted by $\clj\leq \cli$)
	if $\cli-\clj$ is a CP instrument. 
	\begin{thm}[Radon-Nikodym type theorem]\label{radonnikodymthm}
		Let $\cli:\ox\to CP(\cla,\bh)$ be an instrument with the minimal bi-dilation quadruple $(\clk,\pi,E,V)$. Then for any instrument $\clj : \ox \to CP(\cla,\bh)$, $\clj \leq \cli$ if and only if there exists a positive contraction 
		$D \in \{\pi(a)E(A):a\in\cla,A\in\ox\}'$ such that, \begin{equation}\label{RN}\clj(A,a)=V^*D \pi(a)E(A)V, ~\forall A\in\ox,~a\in\cla.\end{equation}
	\end{thm}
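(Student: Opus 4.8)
The plan is to establish both implications inside the minimal bi-dilation quadruple $(\clk,\pi,E,V)$ of $\cli$ furnished by Theorem \ref{thm : Bi-dilation theorem}.

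For sufficiency, suppose a positive contraction $D$ lies in the commutant $\{\pi(a)E(A):a\in\cla,A\in\ox\}'$ and $\clj$ is defined by \eqref{RN}. Since $\pi(1_\cla)E(A)=E(A)$ and $\pi(a)E(X)=\pi(a)$, this commutant is a von Neumann algebra containing every $E(A)$ and every $\pi(a)$, hence also $D^{1/2}$ and $(I-D)^{1/2}$; therefore these square roots commute with all $\pi(a)$ and $E(A)$. Rewriting then gives $\clj(A)(a)=(D^{1/2}V)^*\pi(a)E(A)(D^{1/2}V)$ and $(\cli-\clj)(A)(a)=((I-D)^{1/2}V)^*\pi(a)E(A)((I-D)^{1/2}V)$, so $\clj$ and $\cli-\clj$ are compressions of the spectral instrument $\pi E$ by bounded operators $\clh\to\clk$. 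I would then observe that any compression $S^*\pi(\cdot)E(\cdot)S$ of a spectral instrument is automatically a CP instrument: for fixed $A$ it equals $a\mapsto(E(A)S)^*\pi(a)(E(A)S)$, which is CP, and for fixed $a\ge 0$ it equals $A\mapsto(\pi(a)^{1/2}S)^*E(A)(\pi(a)^{1/2}S)$, which is a POVM, with countable additivity inherited from $E$; by the bivariate realization of Remark \ref{bivariate-realization} it is a CP instrument. Hence both $\clj$ and $\cli-\clj$ are CP instruments, i.e.\ $\clj\le\cli$.

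For necessity, assume $\cli-\clj$ is a CP instrument. I would recall from the proof of Theorem \ref{thm : Bi-dilation theorem} the map $\lambda(A,a,h)=\pi(a)E(A)Vh$, whose range spans a dense subspace of $\clk$ with $\langle\lambda(A,a,h),\lambda(B,b,k)\rangle=\langle h,\cli(A\cap B)(a^*b)k\rangle$, and define on this dense span the sesquilinear form
\[
q\Big(\sum_i\lambda(A_i,a_i,h_i),\ \sum_j\lambda(B_j,b_j,k_j)\Big)=\sum_{i,j}\langle h_i,\clj(A_i\cap B_j)(a_i^*b_j)k_j\rangle .
\]
The hard part is to show that $q$ is well defined and bounded on $\clk$, and this is precisely where the hypothesis $\clj\le\cli$ enters. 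Using the same disjointification of the sets $A_i,B_j$ as in the bi-dilation proof, positive definiteness of the kernel attached to the CP instrument $\clj$ gives $q(\xi,\xi)\ge 0$, while positive definiteness of the kernel attached to the CP instrument $\cli-\clj$ gives $q(\xi,\xi)\le\langle\xi,\xi\rangle_{\clk}$. Cauchy--Schwarz for the positive form $q$ then forces $q(\xi,\eta)=0$ whenever $\xi=0$ in $\clk$, so $q$ descends to a bounded positive sesquilinear form on $\clk$ with $0\le q\le\|\cdot\|^2$, and is represented by a unique positive contraction $D\in\bk$ via $q(\xi,\eta)=\langle\xi,D\eta\rangle$.

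It then remains to identify $D$ and recover the formula. From $\pi(u^*)\lambda(A,a,h)=\lambda(A,u^*a,h)$ and $E(C)\lambda(A,a,h)=\lambda(A\cap C,a,h)$, together with the defining formula for $q$, a routine computation on the total set $\{\lambda(A,a,h)\}$ gives $\langle\xi,D\pi(u)\eta\rangle=\langle\xi,\pi(u)D\eta\rangle$ for every unitary $u\in\cla$ and $\langle\xi,DE(C)\eta\rangle=\langle\xi,E(C)D\eta\rangle$ for every $C\in\ox$; since the unitaries span $\cla$, this yields $D\in\{\pi(a)E(A):a\in\cla,A\in\ox\}'$. Finally, evaluating $q$ at $\lambda(X,1_\cla,h)$ and $\lambda(A,a,k)$ gives $\langle h,V^*D\pi(a)E(A)Vk\rangle=\langle h,\clj(A)(a)k\rangle$ for all $A\in\ox$, $a\in\cla$, $h,k\in\clh$, which is \eqref{RN}. (Uniqueness of $D$ is automatic, since $\{\lambda(A,a,h)\}$ is total in $\clk$.)
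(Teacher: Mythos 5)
The paper states Theorem \ref{radonnikodymthm} without proof, so there is nothing to compare line by line; your argument is correct and is exactly the standard Arveson-style proof the authors are implicitly invoking (define the form $q$ from $\clj$ on the dense span $\{\pi(a)E(A)Vh\}$, squeeze it between $0$ and the inner product using positive definiteness of the kernels of $\clj$ and $\cli-\clj$, and read off $D$). One phrasing slip in the sufficiency direction: the commutant $\{\pi(a)E(A)\}'$ does not ``contain every $\pi(a)$'' (that would force $\pi(\cla)$ commutative); what your observation $\pi(1_\cla)E(A)=E(A)$, $\pi(a)E(X)=\pi(a)$ actually shows is that the \emph{set being commuted with} contains all $\pi(a)$ and $E(A)$, so that $\{\pi(a)E(A)\}'=\pi(\cla)'\cap E(\ox)'$ and hence $D^{1/2}$, $(I-D)^{1/2}$ commute with every $\pi(a)$ and $E(A)$ --- which is the conclusion you correctly use.
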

We will call the operator $D$ of this theorem as the {\em  Radon-Nikodym derivative} of $\clj $ with respect to $\cli.$ 
	
Motivated by the notion of pure completely positive (CP) maps for $\cst$-algebras, as introduced in \cite{arvesonsubalgebra}, we have the following definition for quantum instruments.
	
	\begin{dfn}[Pure instruments]\label{Pure instrument}
		Let $\cli:\ox\to CP(\cla,\bh)$ be a CP instrument. $\cli$ is  called a {\em pure instrument} if  any instrument $\clj:\ox\to CP(\cla,\bh)$ dominated by $\cli$ is of the form $\clj=t\cli$ for some $t\in [0,1].$
\end{dfn}

Here is a characterization of pure quantum instruments generalizing a similar result by  Arveson for pure CP maps.
	\begin{ppsn}\label{prop: characterization of pure instruments}
		Let $\cli:\ox\to CP(\cla,\bh)$ be a CP instrument with minimal bi-dilation quadruple $(\clk,\pi, E, V).$ Then $\cli$ is pure if and only if the spectral instrument $\pi E:\ox\to CP(\cla,\bh)$ is irreducible.
	\end{ppsn}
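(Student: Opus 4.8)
The plan is to combine the Radon--Nikodym type theorem (Theorem~\ref{radonnikodymthm}) with the commutant description in Remark~\ref{commutnt of spectral instruments are intersection of commutants of its marginals} and the standard correspondence between purity of a CP map and irreducibility of its Stinespring dilation. Recall that by Theorem~\ref{radonnikodymthm}, every instrument $\clj$ dominated by $\cli$ is of the form $\clj(A,a) = V^*D\pi(a)E(A)V$ for a unique positive contraction $D \in \{\pi(\cla)E(\ox)\}'$, and conversely every such $D$ gives a dominated instrument; moreover $D \mapsto \clj_D$ is affine and order-preserving. The instrument $t\cli$ corresponds precisely to $D = tI_{\clk}$. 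Hence $\cli$ is pure if and only if the only positive contractions in $\{\pi(\cla)E(\ox)\}'$ are the scalars $tI_{\clk}$, $t\in[0,1]$ --- equivalently (since a von Neumann algebra is spanned by its positive contractions) if and only if $\{\pi(\cla)E(\ox)\}' = \bbc I_{\clk}$, which is exactly the statement that $\pi E$ is irreducible.

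The one point requiring care is the injectivity of the map $D \mapsto \clj_D$, i.e.\ that distinct Radon--Nikodym derivatives genuinely give distinct instruments; this is where minimality of the bi-dilation $(\clk,\pi,E,V)$ enters. If $V^*D_1\pi(a)E(A)V = V^*D_2\pi(a)E(A)V$ for all $a,A$, then writing $D = D_1-D_2 \in \{\pi(\cla)E(\ox)\}'$ one computes, for $\xi = \pi(a)E(A)Vh$ and $\eta = \pi(b)E(B)Vk$ in the total set spanning $\clk$,
\begin{equation*}
\langle \xi, D\eta\rangle = \langle \pi(a)E(A)Vh, D\pi(b)E(B)Vk\rangle = \langle h, V^*D\,\pi(a^*b)E(A\cap B)V k\rangle = 0,
\end{equation*}
using that $D$ commutes with $\pi(\cla)$ and $E(\ox)$; by minimality $D = 0$. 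Thus the correspondence is a bijection between $\{$instruments $\leq\cli\}$ and $\{$positive contractions in $\{\pi(\cla)E(\ox)\}'\}$, and it carries the ``multiples of $\cli$'' to ``scalar multiples of $I_{\clk}$''.

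With injectivity in hand, both implications are immediate. If $\pi E$ is irreducible then $\{\pi(\cla)E(\ox)\}' = \bbc I_{\clk}$, so every positive contraction there is $tI_{\clk}$ with $t\in[0,1]$, so every $\clj\leq\cli$ equals $t\cli$, i.e.\ $\cli$ is pure. Conversely, if $\cli$ is pure, then every positive contraction $D\in\{\pi(\cla)E(\ox)\}'$ satisfies $\clj_D = t\cli = \clj_{tI}$ for some $t$, hence by injectivity $D = tI_{\clk}$; since the positive contractions of a von Neumann algebra generate it linearly (every self-adjoint element is a real combination of two positive contractions after scaling, and the algebra is the complex span of its self-adjoint part), we get $\{\pi(\cla)E(\ox)\}' = \bbc I_{\clk}$, so $\pi E$ is irreducible. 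The main obstacle is simply making sure the ``only positive contractions are scalars'' condition is genuinely equivalent to triviality of the commutant as a von Neumann algebra --- this is routine but should be stated, since the definition of pure instrument only quantifies over the dominated instruments, i.e.\ a priori only over positive contractions rather than the whole commutant.
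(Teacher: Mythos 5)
Your proof is correct and follows essentially the same route as the paper: both directions rest on the Radon--Nikodym theorem (Theorem~\ref{radonnikodymthm}) together with the fact that the commutant $\{\pi(\cla)E(\ox)\}'$ reduces to $\bbc I_{\clk}$ exactly when $\pi E$ is irreducible. The only difference is cosmetic --- the paper tests purity against a proper projection in the commutant while you test against all positive contractions and invoke the spanning argument --- and you make explicit the injectivity of $D\mapsto \clj_D$ via minimality, which the paper uses implicitly; this is a welcome clarification rather than a divergence.
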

	\begin{proof}
		If the associated spectral instrument $\pi E:\ox\to Cp(\cla, \bk)$ in the minimal bi-dilation of $\cli$ is irreducible, then $\{\pi(\cla)E(\ox)\}'=\bbc I_{\clk}.$ As a consequence of Theorem \ref{radonnikodymthm}, for any instrument $\clj : \ox \to CP(\cla,\bh)$, $\clj \leq \cli$ we have that $\clj=t\cli$ for some $t\in [0,1],$ i.e. the instrument $\cli$ is pure. 
		Conversely,
		suppose the set $\{\pi(\cla)E(\ox)\}'$ contains a proper projection $P.$ Consider the instrument $\cli':\ox\to CP(\cla,\bh),$ defined by $$\cli'(A)(a)=V^*\pi(a)E(A)PV,~\forall ~a\in\cla,~A\in\ox.$$ It follows that $\cli'$ is  dominated by the instrument $\cli.$ Since the instrument is pure, we must have $\cli'=t\cli$ for some $t\in [0,1].$ Therefore, $$V^*\pi(a)E(A)PV=tV^*\pi(a)E(A)V$$ for all $a\in\cla,~A\in\ox,$ which implies that $P\in\{0,I_\clk\}.$ Hence, the spectral measure $\pi E $ is irreducible.	\end{proof}
	As a direct consequence of Proposition \ref{prop: characterization of pure instruments}, we obtain the following result.
	\begin{crlre}\label{cor: POVM marginal of pure instrument is trivial} The 
		POVM marginal of any pure instrument is trivial.
	\end{crlre}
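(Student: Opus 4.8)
The plan is to read the corollary straight off Proposition~\ref{prop: characterization of pure instruments} together with the structure result for irreducible spectral instruments proved just above it. So the first step is: fix a pure instrument $\cli:\ox\to CP(\cla,\bh)$ and let $(\clk,\pi,E,V)$ be its minimal bi-dilation quadruple. By Proposition~\ref{prop: characterization of pure instruments}, purity of $\cli$ is equivalent to irreducibility of the associated spectral instrument $\pi E:\ox\to CP(\cla,\bk)$.

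The second step is to invoke the (unlabeled) proposition on irreducible spectral instruments shown above: since $\pi E$ is irreducible, that result gives that $\pi$ is an irreducible representation of $\cla$ and, crucially here, that the associated spectral measure is trivial, i.e.\ $E(\ox)\subseteq\{0,I_\clk\}$.

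The third step is to push this down to the POVM marginal. By definition of the marginal and the bi-dilation identity $\cli(A)(a)=V^*\pi(a)E(A)V$ applied with $a=1_\cla$ (using that $\pi$ is unital),
\[
\mu_\cli(A)=\cli(A)(1_\cla)=V^*\pi(1_\cla)E(A)V=V^*E(A)V,\qquad A\in\ox .
\]
Since $E(A)\in\{0,I_\clk\}$ we get $\mu_\cli(A)\in\{0,V^*V\}$ for every $A$; and as $V^*V=V^*E(X)V=\mu_\cli(X)$, this says $\mu_\cli(A)\in\{0,\mu_\cli(X)\}$ for all $A\in\ox$, which is exactly the statement that the POVM marginal is trivial. (If in addition $\cli$ is normalized then $V$ is an isometry, so $\mu_\cli(\ox)\subseteq\{0,I_\clh\}$.)

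I do not expect any real obstacle: once Proposition~\ref{prop: characterization of pure instruments} and the irreducible-spectral-instrument proposition are available, the corollary is the one-line computation above, which is why it is billed as a direct consequence. The only point requiring a small amount of care is pinning down the meaning of ``trivial'' for a possibly non-normalized POVM, which we take to be $\mu_\cli(A)\in\{0,\mu_\cli(X)\}$ for every $A\in\ox$, reducing to $\mu_\cli(\ox)\subseteq\{0,I_\clh\}$ in the normalized case.
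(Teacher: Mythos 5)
Your proof is correct and follows exactly the route the paper intends: purity is converted via Proposition~\ref{prop: characterization of pure instruments} into irreducibility of $\pi E$, the preceding proposition on irreducible spectral instruments gives $E(\ox)\subseteq\{0,I_{\clk}\}$, and compressing by $V$ yields $\mu_\cli(A)\in\{0,\mu_\cli(X)\}$. The paper states this as a direct consequence without writing out the computation, and your added care about the meaning of ``trivial'' in the non-normalized case is a reasonable clarification rather than a deviation.
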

	\begin{dfn}[Compression of instruments]
		Let $\cli_i:\ox\to CP(\cla,\clb(\clh)_i),i=1,2$ be two CP instruments. Then $\cli_2$ is said to be a compression of  $\cli_1$,  if there exists an isometry $W:\clh_2\to\clh_1$ such that $\cli_2(A)=W^*\cli_1(A)W,~\text{for all}~A\in\ox.$
		
	\end{dfn}
	\begin{rmrk}\label{rmk: POVM marginal of compression pure instrument is trivial}
		The compressions of a pure instrument are pure. Furthermore, by Corollary \ref{cor: POVM marginal of pure instrument is trivial},  the POVM marginals of compressions of pure instruments are  trivial. \end{rmrk}
        We record the following elementary observation for subsequent use.
        \begin{rmrk} Let $\cli _1, \cli _2$ be two quantum instruments, which are compressions of a common  spectral instrument $\pi E$. 		Suppose $(\clk,\pi,E,V_i)$ denote the minimal bi-dilation tuple of $\cli_i,i=1,2$. 
        Then one can verify that $\cli_2$ is a compression of $\cli_1$ iff $V_2V_2^*\leq V_1V_1^*,$ which is equivalent to, $\text{range}(V_2)\subseteq\text{range}(V_1).$ 
	\end{rmrk}
	\subsection{Sub-minimal dilations}

Within the framework of the bivariate realization outlined in \ref{bivariate-realization}, it becomes evident that, unlike completely positive (CP) maps or POVMs, quantum instruments admit multiple types of dilations. However, a closer investigation reveals that all such dilations ultimately reduce to the fundamental bi-dilation structure. Our next objective is to explore these variants, collectively referred to as \emph{sub-minimal dilations}, a notion originally introduced in \cite{pellonpaapieces}.

While the authors in \cite{pellonpaapieces} introduced the notion of sub-minimal dilations in the context of instruments arising from completely positive maps defined on the von Neumann tensor product of algebras into $\mathcal{B}(\mathcal{H})$, their focus was primarily on understanding the connection between the marginals and the joint CP map. The intrinsic relationships between two sub-minimal dilations and the minimal bi-dilation was not their focus. In this work, we reinterpret and generalize the notion of sub-minimal dilations in a more abstract setting, offering what we believe is a more systematic treatment of the topic.

	Based on their construction methods, these dilations are classified into two types: CP sub-minimal dilations and POVM sub-minimal dilations. Briefly speaking, in the CP sub-minimal dilation, the CP marginal is dilated using the Stinespring representation, ignoring the POVM marginal. Similarly, in the POVM sub-minimal dilation the POVM marginal is dilated with help from the Naimark dilation, disregarding the other marginal. For clarity and convenience, we provide a concise overview of these constructions below.

	\begin{thm}[CP sub-minimal dilation]\label{subminimal-1}
		Let $\cli:\ox\to CP(\cla,\bh)$ be a CP instrument. Then there exists a Hilbert space $\clk_1$, a unital $*$-homomorphism $\pi_1: \cla \to \mathcal{B}$, a normalized POVM $\mu: \ox \to \pi_1(\cla)' \subset \clb(\clk_1)$ and a bounded linear map $V_1: \clh \to \clk_1$ such that, \begin{equation}
\cli(A,a)=V^*_1\pi_1(a)\mu(A)V_1\quad\quad\text{and}\quad\quad \mu(A)\pi_1(a)=\pi_1(a)\mu(A),\end{equation}
		for all $a \in \cla$, $A \in \clo(X)$. Moreover, it satisfies the minimality condition: $[\pi_1(\cla)V_1\clh]=\clk_1$. 
	\end{thm}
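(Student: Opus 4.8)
The plan is to realize the CP sub-minimal dilation as a compression of the minimal bi-dilation quadruple $(\clk,\pi,E,V)$ supplied by Theorem \ref{thm : Bi-dilation theorem}, cutting $\clk$ down to the subspace generated by $\pi(\cla)$ acting on $V\clh$. Set $\clk_1 := [\pi(\cla)V\clh] \subseteq \clk$ and let $P \in \clb(\clk)$ be the orthogonal projection onto $\clk_1$. The first point to record is that $\clk_1$ \emph{reduces} $\pi$: it is $\pi$-invariant since $\pi(b)\pi(a)Vh = \pi(ba)Vh$, and an invariant subspace for a $*$-homomorphism is automatically reducing, so $P \in \pi(\cla)'$. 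Since $\pi$ is unital, $Vh = \pi(1_\cla)Vh \in \clk_1$, so $V$ factors as $V = \iota V_1$ with $\iota : \clk_1 \hookrightarrow \clk$ the inclusion and $V_1 : \clh \to \clk_1$ bounded (with $\|V_1\| = \|V\| = \|\cli(X)(1_\cla)\|^{1/2}$). Define $\pi_1(a) := \pi(a)|_{\clk_1}$, which is a unital $*$-homomorphism precisely because $\clk_1$ reduces $\pi$, and $\mu(A) := P\,E(A)\,P|_{\clk_1}$.

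Next I would carry out the three verifications. First, $\mu$ is a normalized POVM: each $\mu(A)$ is positive as a compression of $E(A) \ge 0$; $\mu(X) = PE(X)P|_{\clk_1} = P|_{\clk_1} = I_{\clk_1}$; and countable additivity in the WOT is inherited from the (SOT, hence WOT) countable additivity of the spectral measure $E$ under compression by $P$. Second, $\mu(A) \in \pi_1(\cla)'$: using $P \in \pi(\cla)'$ together with $E(A)\pi(a) = \pi(a)E(A)$ one computes on $\clk_1$ that $\pi_1(a)\mu(A) = P\pi(a)E(A)P = PE(A)\pi(a)P = \mu(A)\pi_1(a)$; this is the one place where the reducing (not merely invariant) property of $\clk_1$ is indispensable. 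Third, the dilation identity: since $\iota^*\pi(a)E(A)\iota = P\pi(a)E(A)P|_{\clk_1} = \pi_1(a)\mu(A)$ (again by $P \in \pi(\cla)'$), we obtain $\cli(A)(a) = V^*\pi(a)E(A)V = V_1^*\iota^*\pi(a)E(A)\iota V_1 = V_1^*\pi_1(a)\mu(A)V_1$, and the commutation relation $\mu(A)\pi_1(a) = \pi_1(a)\mu(A)$ is exactly the content of the second step. Finally, the minimality $[\pi_1(\cla)V_1\clh] = \clk_1$ holds by the very definition of $\clk_1$.

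An alternative, more self-contained route would be to apply the Stinespring construction directly to the CP marginal $\phi_\cli = \cli(X,\cdot)$, producing $\clk_1 = [\pi_1(\cla)V_1\clh]$ with $\phi_\cli(a) = V_1^*\pi_1(a)V_1$, and then to define $\mu(A)$ through the sesquilinear form $\langle \pi_1(a)V_1 h,\; \mu(A)\,\pi_1(b)V_1 k\rangle := \langle h,\, \cli(A)(a^*b)\,k\rangle$, showing that complete positivity of $\cli$ makes this well defined and bounded, with the POVM axioms and commutation with $\pi_1$ coming from the measure properties of the $\cli_{a,h,k}$'s. I expect the main obstacle to be the same in both approaches: proving that the extracted POVM actually lands in the commutant $\pi_1(\cla)'$ — in the compression approach this is the step that forces one to use that $\clk_1$ reduces $\pi$, while in the direct approach it is the consistency check of the kernel across its two slots. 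The remaining items (boundedness of $V_1$, normalization, countable additivity) are routine.
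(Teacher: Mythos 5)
Your proof is correct, but it takes a different route from the paper's. The paper starts from the minimal Stinespring dilation $(\clk_1,\pi_1,V_1)$ of the CP marginal $\phi_\cli$ and invokes Arveson's Radon--Nikodym theorem: since $\cli(A,\cdot)\leq\cli(X,\cdot)=\phi_\cli$ for every $A$, each $\cli(A,\cdot)$ is represented by a positive contraction $D_A\in\pi_1(\cla)'$, and the assignment $A\mapsto D_A=:\mu(A)$ is then checked to be countably additive on the total set $\{\pi_1(a)V_1h\}$. Membership of $\mu(A)$ in $\pi_1(\cla)'$ is thus automatic from the Radon--Nikodym theorem, whereas in your compression argument it has to be extracted from the fact that $\clk_1=[\pi(\cla)V\clh]$ reduces $\pi$ (so $P\in\pi(\cla)'$) together with $[E(A),\pi(a)]=0$ --- you correctly identify this as the crux, and your verification is sound (an invariant subspace of a $*$-representation is indeed reducing). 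What each approach buys: the paper's construction is logically independent of the bi-dilation theorem (it only needs Stinespring plus Radon--Nikodym for CP maps), while yours presupposes Theorem \ref{thm : Bi-dilation theorem}; this causes no circularity since that theorem is proved by an independent GNS argument, and in fact your construction is precisely the one the paper itself records later, in the subsection on recovering subminimal dilations from the minimal bi-dilation, where the CP sub-minimal quadruple is identified as $(\clk_1,P_1\pi P_1,P_1EP_1,V)$. Your sketched alternative (defining $\mu$ by a sesquilinear form on $[\pi_1(\cla)V_1\clh]$) is essentially the paper's route in disguise. All remaining verifications --- normalization via $E(X)=I_\clk$, WOT countable additivity under compression, the factorization $V=\iota V_1$ from unitality of $\pi$, and minimality by definition of $\clk_1$ --- are correct as you state them.
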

\begin{proof}
		Let $(\clk_1,\pi_1,V_1)$ be the minimal Stinespring dilation of the associated CP map $\phi_\cli$ of the instrument $\cli, $
        so that $\cli (\cdot )=V_1^*\pi _1(\cdot )V_1, $ and $\clk_1=[\pi _1(\cla )V_1(\clh )].$
		For every $a\in\cla$ and $A\in\ox$ it holds that $\cli(A,a)\leq\cli(X,a).$ By the Radon Nikodym theorem for CP maps, there exists $D\in\pi_1(\cla)'$ with $0\leq D\leq I_{\clk_1}$ such that \begin{equation*}
			\cli(A,a)=V_1^*\pi_1(a)DV_1,~ for~all~a\in\cla.
		\end{equation*} 
		This implies that every element of $\clo(X)$ corresponds to a positive contraction of $\pi_1(\cla)'.$ Let, $\mu:\clo(X)\to\pi_1(\cla)'$ denote this correspondence. It follows directly from the definition of $\mu$ that it commutes with the $*$-homomorphism $\pi_1.$ To establish that $\mu$ is a POVM, it remains to verify the following:
		\begin{equation*}
			\mu(\cup_i A_i)=\sum_i \mu(A_i),
		\end{equation*}
		for any countable collection of disjoint subsets $\{A_i\}\subset \clo(X)$, where the sum on the right-hand side converges in WOT. Since the collection of vectors of the form $\pi_1(a)V_1h$ forms a total set for $\clk_1$, it suffices to prove that,
		\begin{eqnarray}\label{povmproof1}
			\langle \pi_1(a)V_1h,\mu(\cup A_i)\pi_1(b)V_1k\rangle =\sum_i\langle\pi_1(a)V_1h,\mu(A_i)\pi_1(b)V_1k\rangle, 
		\end{eqnarray}
		for all $a,b\in \cla$ and $h,k\in \clh $. Equation \ref{povmproof1} is an immediate consequence of the fact that $\cli$ is an instrument. Consequently, we have that
		$\cli(A,a)=V_1^*\pi_1(a)\mu(A)V_1~,\forall ~a\in\cla,A\in\clo(X).$ 
	\end{proof}
	\begin{rmrk}\label{sub-minimal dilation dilates to bi-dilation}
		Furthermore, if we consider the minimal Naimark dilation of the normalized POVM $\mu$ that appears in the subminimal dilation described above, we obtain, $$\cli=V_1^*\pi_1W_1^*E_1W_1V_1,$$ where $(\widetilde{\clk_1},E_1, W_1)$ is the minimal Naimark tuple for $\mu$, satisfying $\mu (\cdot )= W_1^*E_1(\cdot )W_1$ with  $$\widetilde{\clk_1}=[E_1(\ox )W_1\clk _1]=[E_1(\ox)W_1\pi_1(\cla )V_1(\clh )].$$

		Following an argument analogous to the construction of the bi-dilation, one sees that the tuple $(\widetilde{\clk_1},\widetilde{\pi_1}, E_1, W_1V_1)$ forms a minimal bi-dilation tuple of the instrument $\cli,$ where $\widetilde{\pi_1}:\cla\to\clb(\widetilde{\clk_1})$ is the unital $*$-homomorphism commuting with $E_1$ given by, \begin{eqnarray*}
		  \widetilde{\pi_1}(b)(E_1(A)W_1\pi_1(a)V_1h)=E_1(A)W_1\pi_1(ba)V_1h, ~\forall A\in\ox,~a,b\in\cla,~h\in\clh.
		\end{eqnarray*} This calculation clarifies that the notion of subminimal dilation does not provide much structural insight beyond the framework of minimal bi-dilation.
	\end{rmrk}

	We call the quadruple $(\clk_1,\pi_1,\mu,V_1)$ in Theorem \ref{subminimal-1}, the CP subminimal dilation quadruple of $\cli$ and adopt this notation throughout the paper. Next we introduce another type of subminimal dilation. Instead of Stinespring dilation of the associated CP map, we start with the Naimark dilation of the associated POVM $\mu_{\cli}.$  The proof strategy mirrors that of Theorem \ref{subminimal-1} and hence we omit the proof. However we present the formal statement.
	\begin{thm}[POVM Subminimal dilation]\label{subminimal-2}
		Let $\cli :\ox\to CP(\cla,\bh)$ be a CP instrument. Then there exists a Hilbert space $\clk_2,$ a spectral measure $E_2: \clo(X) \to \clb(\clk_2),$ a UCP map $\phi: \cla \to E_2(\clo(X))' \subset\clb(\clk_2) $, and a linear map $V_2: \clh \to \clk_2$ such that,
		$$\cli(A,a)=V^*_2\phi(a)E_2(A)V_2\quad\quad\text{and}\quad\quad E_2(A)\phi(a)=\phi(a)E_2(A),$$
		for all $a \in \cla$, $A \in \clo(X)$. Moreover, it satisfies the minimality condition: $[E_2(\ox)V_2\clh]=\clk_2$. 
	\end{thm}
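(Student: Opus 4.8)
The plan is to mirror the proof of Theorem \ref{subminimal-1} with the roles of the two marginals interchanged: there we dilated the CP marginal via Stinespring and recovered the POVM marginal as a commuting family of positive contractions; here I would dilate the POVM marginal via Naimark and recover the CP marginal as a commuting unital completely positive map. So the first step is to form the minimal Naimark dilation $(\clk_2, E_2, V_2)$ of the associated POVM $\mu_\cli = \cli(\cdot,1_\cla)$, so that $\mu_\cli(A) = V_2^* E_2(A) V_2$ for all $A \in \ox$, $E_2 : \ox \to \clb(\clk_2)$ is a spectral measure, and the minimality condition $[E_2(\ox) V_2 \clh] = \clk_2$ holds. (If $\cli$ is normalized, then $V_2$ is an isometry, but this is not needed.)

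Next, for each $a \in \cla$ with $0 \leq a \leq 1_\cla$, note that $\mu_\cli(A) - \cli(A,a) = \cli(A, 1_\cla - a) \geq 0$ for every $A \in \ox$, so the positive operator-valued measure $\cli(\cdot, a)$ is dominated by $\mu_\cli$. The Radon--Nikodym theorem for POVMs (Theorem 2.8, \cite{manishcmp}) then supplies a positive contraction $\phi(a) \in E_2(\ox)'$ with $\cli(A,a) = V_2^* \phi(a) E_2(A) V_2$ for all $A \in \ox$. I would then extend $a \mapsto \phi(a)$ to all of $\cla$ by linearity; the only point needing care is consistency, for which it suffices to check $\phi(a) + \phi(b) = \phi(a+b)$ whenever $a,b,a+b$ all lie in $[0,1_\cla]$. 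Setting $S = \phi(a) + \phi(b) - \phi(a+b) \in E_2(\ox)'$, one computes for $A, B \in \ox$ and $h,k \in \clh$ that $\langle E_2(A) V_2 h,\, S E_2(B) V_2 k\rangle = \langle h,\, V_2^* S E_2(A \cap B) V_2 k\rangle = 0$, since $\cli(A\cap B, \cdot)$ is linear and $E_2$ is projection-valued; minimality of the dilation then forces $S = 0$. The same computation with $I_{\clk_2}$ in place of $\phi(a+b)$, together with $V_2^* E_2(\cdot) V_2 = \mu_\cli = \cli(\cdot, 1_\cla)$, shows $\phi(1_\cla) = I_{\clk_2}$, i.e. $\phi$ is unital; and both the reconstruction identity $\cli(A,a) = V_2^* \phi(a) E_2(A) V_2$ and the commutation $E_2(A)\phi(a) = \phi(a) E_2(A)$ pass to all $a \in \cla$ (the commutant $E_2(\ox)'$ being a linear subspace).

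It remains to verify that $\phi$ is completely positive, and here I would reuse the positive definiteness already established in the proof of the Bi-dilation Theorem \ref{thm : Bi-dilation theorem}. Given $a_1, \dots, a_n \in \cla$ and vectors of the special form $\xi_i = E_2(A_i) V_2 h_i$, which span a dense subspace of $\clk_2$ by minimality, one has
\[
\sum_{i,j} \langle \xi_i,\, \phi(a_i^* a_j)\xi_j\rangle \;=\; \sum_{i,j} \langle V_2 h_i,\, \phi(a_i^* a_j) E_2(A_i \cap A_j) V_2 h_j\rangle \;=\; \sum_{i,j} \langle h_i,\, \cli(A_i \cap A_j)(a_i^* a_j) h_j\rangle \;\geq\; 0,
\]
using that each $\phi(a_i^* a_j)$ commutes with $E_2$, that $E_2$ is projection-valued, and finally the complete positivity of the instrument $\cli$. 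By continuity this inequality persists for arbitrary $\xi_i \in \clk_2$, so $[\phi(a_i^* a_j)] \geq 0$ and $\phi$ is CP; being unital, it is in particular bounded. Thus $(\clk_2, E_2, \phi, V_2)$ has all the required properties.

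As for the difficulty: there is no genuine obstacle, since the argument is the mirror image of Theorem \ref{subminimal-1}. The two points deserving attention are (i) that the assignment $a \mapsto \phi(a)$ is well defined and linear rather than merely defined on the positive part of the unit ball, and (ii) the extraction of complete positivity; both are handled by the density argument on the total set $\{E_2(A) V_2 h\}$, which is precisely why the minimality of the Naimark dilation is built into the construction from the outset. One should also invoke the cited Radon--Nikodym theorem for POVMs in a form that does not presuppose the dominating measure $\mu_\cli$ to be normalized, exactly as the proof of Theorem \ref{subminimal-1} used the Radon--Nikodym theorem for CP maps without a unitality hypothesis.
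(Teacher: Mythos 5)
Your proposal is correct and is exactly the mirror-image argument the paper has in mind: the paper omits the proof of Theorem \ref{subminimal-2}, stating only that it mirrors Theorem \ref{subminimal-1}, and your construction (minimal Naimark dilation of $\mu_\cli$, Radon--Nikodym for POVMs to produce $\phi$ on $[0,1_\cla]$, extension by linearity via the totality of $\{E_2(A)V_2h\}$, and complete positivity drawn from the kernel positivity in the bi-dilation proof) is precisely that mirror. The only wording to tighten is in the complete-positivity step: the inequality must be verified for $\xi_i$ ranging over the \emph{span} of $\{E_2(A)V_2h\}$, not merely over single vectors of that form, which is exactly what the full kernel positivity established in Theorem \ref{thm : Bi-dilation theorem} supplies before one passes to all of $\clk_2$ by continuity.
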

	\begin{rmrk}\label{sub-minimal dilation dilates to bi-dilation-2}
   Similar to Remark \ref{sub-minimal dilation dilates to bi-dilation}, in this case as well, if we further take the minimal Stinespring dilation $(\widetilde{\clk_2},\pi_2,W_2)$ of the UCP map $\phi,$ we obtain a minimal bi-dilation of the instrument $\cli,$ given by, $(\widetilde{\clk_2},\pi_2,\widetilde{E}_2,W_2V_2),$ where, $\widetilde{\clk_2}=\overline{\mbox{span}}\{\pi_2(a)W_2(E_2(A)V_2(h)):a\in\cla,~A\in\ox,~h\in\clh\}$ and $\widetilde{E}_2:\ox\to\clb(\widetilde{\clk}_2)$ is a spectral measure which commutes with the $*$-homomorphism ${\pi}_2$ given by,$$\widetilde{E}_2(B)(\pi_2(a)W_2(E_2(A)V_2(h)))=\pi_2(a)W_2(E_2(A\cap B)V_2(h)),~a\in\cla,~A,B\in\ox,~h\in\clh.$$
        \end{rmrk}
	We call the quadruple $(\clk_2,\phi,E_2,V_2)$ in Theorem \ref{subminimal-2}, as the POVM subminimal dilation quadruple of $\cli$ and we fix this notation throughout the paper.
    It is not hard to write down examples where all the three dilations are distinct.

	\subsection{Recovering subminimal dilations from the minimal bi-dilation}\label{Relation between Sub-minimal dilation and minimal Bi-dilation}
	From the previous section on sub-minimal dilation (Remarks \ref{sub-minimal dilation dilates to bi-dilation} and \ref{sub-minimal dilation dilates to bi-dilation-2}), it is clear that every sub-minimal dilation naturally leads to the minimal bi-dilation of an instrument. Conversely, suppose we begin with the minimal dilation quadruple $(\clk,\pi, E, V) $ of an instrument $\cli.$ 
	Consider the orthogonal projections $P_1,P_2\in\bk$ with ranges as the sub-spaces $\clk_1,\clk_2$ of the minimal dilation space $\clk,$ defined as $$\clk_1=[\pi(\cla)V(\clh)] ~\text{and}~ \clk_2=[E(\ox)V(\clh)].$$ Since $\clk_1$ is invariant under the representation $\pi,$ the compression $P_1\pi P_1$ induces a sub-representation of $\pi.$ Then it is immediate from the definition of $\clk_1$ that the tuple $(\clk_1,P_1\pi P_1,V)$ forms a minimal Stinespring triple for the CP marginal $$\phi_{\cli}(a)= V^*\pi(a)V,~\text{for all}~a\in\cla.$$ Moreover, the POVM $\mu$ appears in the CP sub-minimal dilation Theorem \ref{subminimal-1} can be identified as $$\mu(A)=P_1E(A)P_1,~\forall~A\in\ox.$$ Thus, we conclude that starting from the minimal bi-dilation quadruple $(\clk, \pi, E, V)$ of an instrument $\cli$, we can explicitly identify the CP sub-minimal dilation as $(\clk_1, P_1 \pi P_1, P_1 E P_1, V).$ 
	By a similar line of reasoning, we can recover the POVM subminimal dilation as $(\clk_2, P_2 \pi P_2, P_2 E P_2, V)$ from the minimal bi-dilation quadruple $(\clk,\pi,E,V)$ of the instrument $\cli$.

	\subsection{Decomposable instruments}
    In classical measure theory, a product measure combines two measure spaces into a single joint space in a consistent and well-understood manner. Motivated by the discussion in Section \ref{Introduction}, we now consider the quantum analogue of this concept. In \cite{ozawa}, Ozawa introduced the notion of \emph{decomposable} instruments, extending the idea of product measures to the non-commutative setting of operator algebras. Such instruments are those whose statistical structure can be factored along two components, paralleling the factorization of product measures into their marginals in the classical case.  This interpretation is supported by the structural insights provided in Remark \ref{bivariate-realization}. We now formally present the definition of decomposable instruments:

	\begin{dfn}[Decomposable instruments]\label{Decomposableinstruments}
		Let $\cli:\ox\to CP(\cla,\bh)$ be a CP instrument. Then $\cli$ is said to be a decomposable instrument if
		\begin{equation*}
			\cli(A,a)=\phi_\cli(a)\,\mu_\cli(A),\quad \forall a \in \cla, ~A \in \ox,
		\end{equation*}
		where $\phi_\cli$ and $\mu_\cli$ are respectively the CP map and the POVM associated with the instrument $\cli .$ 
	\end{dfn}
	It is evident that if $\cli:\ox\times\cla\to\bh$ is a decomposable instrument, then $\cli(A,a)=\phi_\cli(a)\mu_\cli(A)=\mu_\cli(A)\phi_\cli(a)$, $\forall a \in \cla, ~A \in \clo(X).$

	We will now present a characterization of decomposable instruments utilizing the concept of sub-minimal dilations of quantum instruments.
	\begin{thm}\label{decomposibleequivalent-1}
		Let $\cli:\clo(X)\times\cla\to\bh$ be a CP instrument with the minimal bi-dilation quadruple $(\clk,\pi,E,V).$ Let $P_1,P_2$ be the orthogonal projections onto the sub-spaces $[\pi(\cla)V(\clh)]$ and $[E(\ox)V(\clh)]$ of the dilation space $\clk.$ Then TFAE: 
		\begin{enumerate}[label=(\roman*)]
			\item $\cli$ is decomposable,
			\item $P_1E(A)P_1VV^*=VV^*P_1E(A)P_1VV^*=VV^*P_1E(A)P_1,$ for all $A\in\ox,$
			\item $P_2\pi(a)P_2VV^*=VV^*P_2\pi(a)P_2VV^*=VV^*\pi(a)P_2,$ for all~$a\in\cla.$
		\end{enumerate}

	\end{thm}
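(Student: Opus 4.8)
The plan is to convert decomposability into a single operator identity on the bi-dilation space $\clk$ and then recognize conditions (ii) and (iii) as that identity transcribed through the projections $P_1$ and $P_2$. From the bi-dilation equations $\cli(A,a)=V^*\pi(a)E(A)V$, $E(A)\pi(a)=\pi(a)E(A)$, together with $\phi_\cli(a)=V^*\pi(a)V$ and $\mu_\cli(A)=V^*E(A)V$ (consequences of $\pi(1_\cla)=I_{\clk}$ and $E(X)=I_{\clk}$), one gets
\[
\cli(A,a)-\phi_\cli(a)\mu_\cli(A)=V^*\pi(a)(I_{\clk}-VV^*)E(A)V,\qquad
\cli(A,a)-\mu_\cli(A)\phi_\cli(a)=V^*E(A)(I_{\clk}-VV^*)\pi(a)V .
\]
Since a decomposable instrument also satisfies $\cli(A,a)=\mu_\cli(A)\phi_\cli(a)$, $\cli$ is decomposable if and only if the first expression vanishes for all $a\in\cla,\,A\in\ox$, equivalently if and only if the second one does.

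For (i)$\Leftrightarrow$(ii), I would use the geometry of $\clk_1=[\pi(\cla)V\clh]$ and $\clk_2=[E(\ox)V\clh]$. Since $\pi$ is unital and $E$ is spectral, $V\clh\subseteq\clk_1\cap\clk_2$, so $P_iV=V$ and $P_iVV^*=VV^*=VV^*P_i$ for $i=1,2$; moreover $\clk_1$ is invariant under the $*$-algebra $\pi(\cla)$, hence reducing, so $P_1\in\pi(\cla)'$. Using $V^*=V^*P_1$ and $P_1\pi(a)=\pi(a)P_1$, one rewrites $V^*\pi(a)(I_{\clk}-VV^*)E(A)V=V^*\pi(a)(P_1-VV^*)E(A)V$, whose middle factor $Y:=(P_1-VV^*)E(A)V$ has range inside $\clk_1$. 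For fixed $A$, if $V^*\pi(a)Y=0$ for all $a\in\cla$, then $\langle Yh,\pi(b)Vk\rangle=0$ for all $b\in\cla$ and $h,k\in\clh$; since such vectors are total in $\clk_1$ and $\ran Y\subseteq\clk_1$, this forces $Y=0$. Hence $\cli$ is decomposable iff $(P_1-VV^*)E(A)V=0$ for all $A$. It remains to note that $(P_1-VV^*)E(A)V=0$ is equivalent to $(P_1-VV^*)E(A)VV^*=0$ --- the forward direction by right multiplication with $V^*$, the backward one because $(P_1-VV^*)E(A)$ is bounded and $\overline{\ran\, VV^*}=\overline{\ran\, V}$ --- and that, after inserting $P_1VV^*=VV^*=VV^*P_1$, each of the two equalities in (ii) unwinds to precisely $(P_1-VV^*)E(A)VV^*=0$.

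For (i)$\Leftrightarrow$(iii), I would run the mirror-image argument: starting from $V^*E(A)(I_{\clk}-VV^*)\pi(a)V=0$ and using that $\clk_2$ is invariant under the projections $E(\ox)$ (so $P_2\in E(\ox)'$), that $V^*=V^*P_2$, and that $\{E(A)Vh\}$ is total in $\clk_2$, one obtains $(P_2-VV^*)\pi(a)V=0$ for all $a$; this passes to $(P_2-VV^*)\pi(a)VV^*=0$ by the same boundedness argument, and unwinds, via $P_2VV^*=VV^*=VV^*P_2$, to the two equalities in (iii).

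The step I expect to require the most care is the non-normalized case: $V$ need not be an isometry, or even a partial isometry, so $VV^*$ is only a positive contraction, and the ``$V$-form'' and the ``$VV^*$-form'' of these identities genuinely differ; reconciling them is exactly where the identity $\overline{\ran\, VV^*}=\overline{\ran\, V}$ is used. Everything else is bookkeeping with the relations $P_iV=V$, $P_1\in\pi(\cla)'$, $P_2\in E(\ox)'$, and careful tracking of which subspace each operator maps into.
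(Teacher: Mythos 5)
Your proposal is correct and follows essentially the same route as the paper: both reduce decomposability to the identity $P_1E(A)P_1V=VV^*P_1E(A)P_1V$ (equivalently $(P_1-VV^*)E(A)V=0$) by exploiting the totality of $\pi(\cla)V\clh$ in $\clk_1$ together with $P_1\in\pi(\cla)'$ and $P_1V=V$, and then pass between the $V$-form and the $VV^*$-form of the identity, with the mirror argument on $\clk_2$ giving (iii). Your write-up merely makes explicit (via $\overline{\mathrm{ran}\,VV^*}=\overline{\mathrm{ran}\,V}$ and the range argument for $Y$) some steps the paper leaves implicit in its chain of equivalences.
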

	\begin{proof}
		From the definition \ref{Decomposableinstruments} we have, $\cli$ is decomposable if and only if 
		\begin{eqnarray}\label{decomposibleequation}
			\cli(A,a)= \phi_\cli(a)\mu_\cli(A),\quad \forall a \in \cla, ~A \in \clo(X).
		\end{eqnarray}
		Using the CP sub-minimal dilation of $\cli,$ equation \ref{decomposibleequation} is equivalent to:
		\begin{eqnarray*}
		& &	V^*P_1\pi(a) P_1E(A)P_1V=V^*P_1\pi P_1(a)VV^*P_1E(A)P_1V \\
            &\iff& V^*P_1E(A)P_1(1-VV^*)P_1\pi(a) P_1V=0\\
			&\iff& P_1 E(A)P_1 V=VV^*P_1E(A)P_1V\\&\iff &P_1E(A)P_1VV^*=VV ^*P_1E(A)P_1VV^*,
		\end{eqnarray*}
		for all $A\in\ox,~a\in\cla.$ This establishes the equivalence (i)$\iff$ (ii).
		Following a similar line of reasoning one can establish the equivalence (i)$\iff$ (iii). Combining these two results, we conclude the equivalence between (ii) and (iii).
	\end{proof}

	As a consequence of Theorem \ref{decomposibleequivalent-1}, we obtain the following corollary, originally established by Ozawa (Proposition 4.3 in \cite{ozawa}). Making use of this it is possible to give an alternative proof of Theorem \ref{characterization of spectral}.

	\begin{crlre}\label{decomposable-sufficientcondition}
		A CP instrument $\cli:\clo(X)\times\cla\to\bh$ is decomposable if either $\phi_\cli$ is a $*$-homomorphism or $\mu_\cli$ is a spectral measure.
		
	\end{crlre}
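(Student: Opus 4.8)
The plan is to deduce Corollary \ref{decomposable-sufficientcondition} from Theorem \ref{decomposibleequivalent-1} by verifying one of the equivalent conditions (ii) or (iii) in each of the two cases. Throughout, let $(\clk,\pi,E,V)$ be the minimal bi-dilation quadruple of $\cli$, and let $P_1,P_2$ be the orthogonal projections onto $[\pi(\cla)V(\clh)]$ and $[E(\ox)V(\clh)]$ respectively.

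First I would treat the case where $\mu_\cli$ is a spectral measure. Recall from Section \ref{Relation between Sub-minimal dilation and minimal Bi-dilation} that the CP sub-minimal dilation data are recovered as $\phi_\cli(a) = V^*\pi(a)V$ on $\clk_1 = P_1\clk$ and $\mu_\cli(A) = P_1 E(A) P_1$, and in fact $\mu_\cli(A) = V^* P_1 E(A) P_1 V = V^* E(A) V$ since $V\clh \subseteq \clk_1$. The key computation is to show that $\mu_\cli(A)$ being a projection forces $V\mu_\cli(A) = E(A)V$, exactly as in the proof of Theorem \ref{characterization of spectral}: one expands $[V\mu_\cli(A) - E(A)V]^*[V\mu_\cli(A) - E(A)V]$, uses $V^*E(A)V = \mu_\cli(A)$, $V^*V = I$ (unitality), and $\mu_\cli(A)^2 = \mu_\cli(A)$ to get $0$. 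From $V\mu_\cli(A) = E(A)V$ one then reads off $\cli(A,a) = V^*\pi(a)E(A)V = V^*\pi(a)V\mu_\cli(A) = \phi_\cli(a)\mu_\cli(A)$, which is decomposability by Definition \ref{Decomposableinstruments}. (Alternatively, one can verify condition (ii) of Theorem \ref{decomposibleequivalent-1} directly from $V\mu_\cli(A) = E(A)V$ after compressing by $P_1$, but invoking the definition straight away is cleaner.)

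Next I would handle the symmetric case where $\phi_\cli$ is a $*$-homomorphism. Here one uses the POVM sub-minimal dilation picture: $\phi_\cli(a) = P_2\pi(a)P_2$ on $\clk_2 = P_2\clk$ and $\phi_\cli(a) = V^*\pi(a)V$ with $V\clh \subseteq \clk_2$. Since $\phi_\cli$ is multiplicative and unital, the standard argument shows that $V$ intertwines, i.e. $V\phi_\cli(a) = \pi(a)V$: expand $[V\phi_\cli(a) - \pi(a)V]^*[V\phi_\cli(a) - \pi(a)V]$ for self-adjoint $a$ and use $V^*\pi(a)V = \phi_\cli(a)$, $\phi_\cli(a^2) = \phi_\cli(a)^2$, $V^*V = I$ to obtain $0$; then polarize to all $a\in\cla$. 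With $V\phi_\cli(a) = \pi(a)V$ in hand, $\cli(A,a) = V^*\pi(a)E(A)V = V^*\pi(a)V E(A)V \cdot(\text{care with ordering})$ — more precisely, using $E(A)\pi(a) = \pi(a)E(A)$ and the intertwining relation, $\cli(A,a) = V^*E(A)\pi(a)V = V^*E(A)V\phi_\cli(a) = \mu_\cli(A)\phi_\cli(a) = \phi_\cli(a)\mu_\cli(A)$, the last equality because $\cli$ decomposable is symmetric (as noted after Definition \ref{Decomposableinstruments}) once we know it equals $\mu_\cli(A)\phi_\cli(a)$. Again this is precisely decomposability.

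The main obstacle, such as it is, is keeping track of operator orderings and of the ambient spaces: $\mu_\cli$ and $\phi_\cli$ live on $\clh$ while $E$ and $\pi$ live on $\clk$, so the "Pythagorean" cancellation arguments must be set up on the right space with the correct adjoints, and the passage from a self-adjoint $a$ to general $a$ by polarization needs the usual remark that a UCP map taking a projection (resp. satisfying Schwarz-type equality) propagates multiplicativity. There is also a bookkeeping subtlety in the second case: strictly speaking the relevant unital $*$-homomorphism commuting with $E$ in the minimal bi-dilation is the one produced in Remark \ref{sub-minimal dilation dilates to bi-dilation-2}, so one should either argue on that model or note that all minimal bi-dilations are unitarily equivalent by Theorem \ref{thm : Bi-dilation theorem}, so the conclusion is model-independent. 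None of this is deep; the whole corollary is essentially the observation that the extremal behaviour of a marginal rigidifies the intertwiner $V$, which was already the engine behind Theorem \ref{characterization of spectral}.
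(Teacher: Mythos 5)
Your proof is correct. The paper offers no computation for this corollary --- it is simply recorded as a consequence of Theorem \ref{decomposibleequivalent-1} --- and your route, which reruns the square-expansion from the proof of Theorem \ref{characterization of spectral} on each marginal separately to obtain the intertwining relations $V\mu_\cli(A)=E(A)V$ (when $\mu_\cli$ is spectral) and $V\phi_\cli(a)=\pi(a)V$ (when $\phi_\cli$ is multiplicative), is the natural way to supply the missing details; verifying condition (ii) (resp.\ (iii)) of Theorem \ref{decomposibleequivalent-1} amounts to the same cancellation applied to $P_1E(A)P_1$ (resp.\ $P_2\pi(a)P_2$), so the two routes are essentially interchangeable. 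Three minor points. First, in the homomorphism case the expansion of $[V\phi_\cli(a)-\pi(a)V]^*[V\phi_\cli(a)-\pi(a)V]$ already equals $\phi_\cli(a)^*\phi_\cli(a)-2\,\phi_\cli(a^*)\phi_\cli(a)+\phi_\cli(a^*a)=0$ for arbitrary $a$, so the restriction to self-adjoint elements and the subsequent polarization are unnecessary. Second, your last step derives $\cli(A,a)=\mu_\cli(A)\phi_\cli(a)$ and then appeals to the remark following Definition \ref{Decomposableinstruments} to reverse the order; that remark presupposes decomposability in the form $\phi_\cli\mu_\cli$, so it is cleaner to use the adjoint relation $V^*\pi(a)=\phi_\cli(a)V^*$, which gives $\cli(A,a)=V^*\pi(a)E(A)V=\phi_\cli(a)V^*E(A)V=\phi_\cli(a)\mu_\cli(A)$ directly (or simply take adjoints, since $\cli(A,\cdot)$, $\phi_\cli$ and $\mu_\cli(A)$ are all $*$-preserving). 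Third, the corollary is stated for CP rather than UCP instruments, and you invoke $V^*V=I$; the argument survives without unitality because $V^*V=\mu_\cli(X)=\phi_\cli(1_\cla)$, so spectrality gives $\mu_\cli(A)V^*V\mu_\cli(A)=\mu_\cli(A)$ and multiplicativity gives $\phi_\cli(a)^*V^*V\phi_\cli(a)=\phi_\cli(a^*a)$, which is all the expansions require. None of these affects the validity of the argument.
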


	\subsection{Atomic and non-atomic instruments}
	
	In classical measure theory atomic and non-atomic measures have been widely studied.  
	To better understand the structure of POVMs Ramsey, Plosker, et al. \cite{plosker_ramsey_povmintegratiion_2} extended these notions to the quantum setting of POVMs. More recently, the authors of \cite{manishcmp} utilized the decomposition of POVMs into atomic
and non-atomic POVMs to analyze the $C^*$-extreme points. Here we extend these ideas to the framework of quantum instruments, aiming to recast and build upon some of these results. 
	
	\begin{dfn}\label{definition of atomic and non-atomic instruments}
		Let $\cli:\ox\to CP_{\clh}(\clh)$ be a CP instrument. A subset $ A\in\ox$ is called an
		\emph{atom} for $\cli$ if $\cli( A)\neq0$ and whenever $ B\subseteq A$
		in $\ox$,
			either  $\cli( B)=0$ or $\cli( B)=\cli( A).$
		A CP instrument $\cli$ is called \emph{atomic} if every $ A\in\ox$ with $\cli( A)\neq0$ contains an atom. A CP instrument $\cli$ is called \emph{non-atomic} if it has no atom.
	\end{dfn}

	\begin{rmrk}
		From the definition, it is straight forward to verify that a set $A$ is an atom for an instrument $\cli$ iff it is an atom for the associated POVM $\mu_{\cli}.$  
	\end{rmrk}

	It is a well-known fact that every finite (and more generally, every 
	$\sigma$-finite) positive measure decomposes uniquely into the sum of an atomic and a non-atomic positive measure (see \cite{johnson}). In an analogous manner, every POVM admits a unique decomposition as the sum of an atomic POVM and a non-atomic POVM, as established in \cite{plosker_ramsey_povmintegratiion_2}. While the proof in \cite{plosker_ramsey_povmintegratiion_2} was formulated for POVMs on locally compact Hausdorff spaces, it was later fully generalized to arbitrary measurable spaces in \cite{manishcmp}. We extend this result to the setting of instruments and state the corresponding decomposition theorem in this context. The proof closely follows the classical case as presented in \cite{johnson} and we skip it.

	\begin{thm}\label{thm:every instruments decomposes as a sum of atomic and non atomic instruments}
		Every Quantum instrument decomposes uniquely as a sum of an atomic CP instrument and a non-atomic CP instrument.
	\end{thm}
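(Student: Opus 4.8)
The plan is to mimic the classical Johnson-type argument for the decomposition of $\sigma$-finite measures, transported through the POVM marginal. The key reduction is the remark immediately preceding the statement: a set $A\in\ox$ is an atom for $\cli$ if and only if it is an atom for $\mu_\cli$, and by Proposition \ref{zero set of marginals and the whole are same}, $\cli(A)=0$ iff $\mu_\cli(A)=0$. Hence $\cli$ is atomic (resp.\ non-atomic) exactly when $\mu_\cli$ is atomic (resp.\ non-atomic). So I would first invoke the known decomposition $\mu_\cli=\mu^{\mathrm{a}}+\mu^{\mathrm{na}}$ of the associated POVM into its atomic and non-atomic parts, as established in \cite{plosker_ramsey_povmintegratiion_2, manishcmp}. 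The atomic part is supported on a measurable set $S$ which is an (at most countable) union of atoms, and $\mu^{\mathrm{a}}(\cdot)=\mu_\cli(\cdot\cap S)$, $\mu^{\mathrm{na}}(\cdot)=\mu_\cli(\cdot\cap S^c)$.

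Next I would transfer this splitting to the instrument itself by \emph{restricting} $\cli$ to $S$ and to $S^c$: set
\begin{equation*}
\cli^{\mathrm{a}}(A):=\cli(A\cap S),\qquad \cli^{\mathrm{na}}(A):=\cli(A\cap S^c),\qquad A\in\ox.
\end{equation*}
Each of these is again a CP instrument (countable additivity and complete positivity are inherited immediately since $A\mapsto A\cap S$ and $A\mapsto A\cap S^c$ are $\sigma$-complete Boolean homomorphisms on $\ox$), and clearly $\cli=\cli^{\mathrm{a}}+\cli^{\mathrm{na}}$. It remains to check that $\cli^{\mathrm{a}}$ is atomic and $\cli^{\mathrm{na}}$ is non-atomic. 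For the former: if $A\in\ox$ has $\cli^{\mathrm{a}}(A)\neq0$, then $\mu_\cli(A\cap S)\neq0$, so $A\cap S$ meets one of the atoms of $\mu_\cli$ contained in $S$ on a positive set; that intersection is then an atom for $\mu_\cli$ hence for $\cli$, and it sits inside $A$ with $\cli^{\mathrm{a}}$ agreeing with $\cli$ on subsets of $S$. For the latter: an atom $B$ for $\cli^{\mathrm{na}}$ would force $B\cap S^c$ to be an atom for $\mu_\cli$ lying in $S^c$, contradicting maximality of the atomic support $S$ (equivalently, non-atomicity of $\mu^{\mathrm{na}}$).

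Finally, uniqueness: if $\cli=\clj^{\mathrm{a}}+\clj^{\mathrm{na}}$ is another such decomposition, then taking POVM marginals gives $\mu_\cli=\mu_{\clj^{\mathrm{a}}}+\mu_{\clj^{\mathrm{na}}}$, a decomposition into an atomic and a non-atomic POVM (using again the atom-transfer remark to see $\mu_{\clj^{\mathrm{a}}}$ is atomic and $\mu_{\clj^{\mathrm{na}}}$ non-atomic). By the uniqueness of the POVM decomposition, $\mu_{\clj^{\mathrm{a}}}=\mu^{\mathrm{a}}=\mu_{\cli^{\mathrm{a}}}$ and similarly for the non-atomic parts; equivalently $\clj^{\mathrm{a}}$ and $\cli^{\mathrm{a}}$ have the same atomic support set up to $\mu_\cli$-null sets, so $\clj^{\mathrm{a}}-\cli^{\mathrm{a}}$ has zero POVM marginal, whence $\clj^{\mathrm{a}}=\cli^{\mathrm{a}}$ by Proposition \ref{zero set of marginals and the whole are same} applied to the difference (which is not a priori completely positive, so one should instead argue directly: $\clj^{\mathrm{a}}(A)=\clj^{\mathrm{a}}(A\cap S)$ and agree with $\cli$ on subsets of $S$ because both restrict the same instrument along the same support). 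The main obstacle is precisely this last point — ensuring that ``same POVM marginal'' for the pieces genuinely forces ``same instrument,'' which is not automatic from Proposition \ref{zero set of marginals and the whole are same} alone since differences of instruments need not be instruments; the cleanest fix is to carry the support set $S$ (canonically attached to $\mu_\cli$) through the whole argument so that every summand is, by construction, a restriction $\cli(\cdot\cap\,\cdot)$ of the original, making uniqueness a statement about $S$ rather than about cancelling instruments.
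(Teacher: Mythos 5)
Your overall strategy --- pushing everything down to the POVM marginal via the observation that $A$ is an atom for $\cli$ iff it is an atom for $\mu_\cli$, together with Proposition \ref{zero set of marginals and the whole are same} --- is the right one, and it is essentially the Johnson-type argument the authors have in mind (they omit the proof). But two steps need repair. The construction of the two pieces rests on the claim that the atomic part of $\mu_\cli$ equals $\mu_\cli(\cdot\cap S)$ for a single measurable set $S$. That is not what the cited POVM decomposition theorem asserts, and it is false in the stated generality. It does hold when $\clh$ is separable: testing against an orthonormal basis shows that any pairwise essentially disjoint family of atoms is countable, so a maximal such family has a measurable union $S$, and your argument goes through. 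For nonseparable $\clh$ it can fail: take $X=[0,1]$ with the Borel $\sigma$-algebra, $K\subseteq[0,1]$ nonmeasurable with full Lebesgue outer measure, $\clh=L^2[0,1]\oplus\ell^2(K)$, and $\mu(A)=M_{\chi_A}\oplus P_{\ell^2(A\cap K)}$. This is a normalized POVM whose atoms are exactly the sets $A$ with $\lambda(A)=0$ and $A\cap K$ a singleton, and whose atomic part is $A\mapsto 0\oplus P_{\ell^2(A\cap K)}$; a Borel set $S$ implementing this part by restriction would have to contain $K$ and be Lebesgue null, which is impossible. So in general the atomic part must be produced as an increasing WOT limit $\cli_a(B)=\lim_F\cli(B\cap S_F)$ over countable unions $S_F$ of atoms (which is what Johnson's construction actually does), not as a single restriction; alternatively, add a separability hypothesis.

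For uniqueness you correctly flag that Proposition \ref{zero set of marginals and the whole are same} cannot be applied to a difference of instruments, but the fix you sketch is not yet an argument. Here is one that closes it. Given $\cli=\clj_a+\clj_{na}$ with $\clj_a$ atomic and $\clj_{na}$ non-atomic, first show that $\clj_{na}$ vanishes on every measurable subset of every atom $A$ of $\cli$: since $\clj_{na}\leq\cli$, for $B\subseteq A$ either $\cli(B)=0$, forcing $\clj_{na}(B)=0$, or $\cli(A\setminus B)=0$, forcing $\clj_{na}(B)=\clj_{na}(A)$, so $\clj_{na}(A)\neq0$ would make $A$ an atom of $\clj_{na}$. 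Hence $\clj_a$ agrees with $\cli$ on every countable union of atoms, which gives $\clj_a\geq\cli_a$ and shows that $\clj_a-\cli_a$ is again an instrument (each $\clj_a(B)-\cli_a(B)=\lim_F\clj_a(B\cap S_F^{\,c})$ is a decreasing WOT limit of CP maps). Its POVM marginal is $\mu_{\clj_a}-\mu_{\cli_a}=0$ by the uniqueness of the POVM decomposition, so $\clj_a=\cli_a$ by Proposition \ref{zero set of marginals and the whole are same}. In the separable case this collapses to your intended argument: $\clj_a(\cdot\cap S^{c})$ and $\clj_{na}(\cdot\cap S)$ are genuine instruments with vanishing POVM marginals, hence zero.
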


	We conclude this discussion with an useful observation on atoms of instruments.
	\begin{ppsn}\label{cli is atomic iff piE is atomic}
		Let $\cli:\ox\to CP(\cla,\bh)$ be a CP instrument with the minimal bi-dilation tuple
		$(\clk,\pi,E,V)$. Then a subset $ A\in\ox$ is an atom for $\cli$ if
		and only if $ A$ is an atom for the spectral instrument $\pi E$. In particular, $\cli$ is
		atomic (non-atomic) if and only if $\pi E$ is atomic (non-atomic).
	\end{ppsn}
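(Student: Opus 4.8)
The plan is to reduce everything to Proposition~\ref{zero sets of cli and piE are same}, which already asserts that $\cli(A)=0$ if and only if $(\pi E)(A)=0$. The only extra ingredient is finite additivity: for $B\subseteq A$ in $\ox$ we have $\cli(A)=\cli(B)+\cli(A\setminus B)$ as maps from $\cla$ to $\bh$, and likewise $(\pi E)(A)=(\pi E)(B)+(\pi E)(A\setminus B)$. Hence $\cli(B)=\cli(A)$ holds precisely when $\cli(A\setminus B)=0$, and $(\pi E)(B)=(\pi E)(A)$ holds precisely when $(\pi E)(A\setminus B)=0$; by Proposition~\ref{zero sets of cli and piE are same} applied to the set $A\setminus B$, these two vanishing conditions are equivalent. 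In the same way $\cli(B)=0$ is equivalent to $(\pi E)(B)=0$, and $\cli(A)\neq 0$ is equivalent to $(\pi E)(A)\neq 0$.

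Granting this, fix $A\in\ox$. Then $A$ is an atom for $\cli$ iff $\cli(A)\neq 0$ and for every $B\subseteq A$ in $\ox$ one has $\cli(B)=0$ or $\cli(B)=\cli(A)$. Translating each of these three assertions through the equivalences of the previous paragraph turns the statement verbatim into: $(\pi E)(A)\neq 0$ and for every $B\subseteq A$ one has $(\pi E)(B)=0$ or $(\pi E)(B)=(\pi E)(A)$, i.e. $A$ is an atom for $\pi E$. This establishes the first assertion. (Alternatively, one could route through the earlier remark identifying atoms of $\cli$ with atoms of $\mu_\cli$, but the direct argument via Proposition~\ref{zero sets of cli and piE are same} is cleaner and more symmetric.)

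For the ``in particular'' part, recall $\cli$ is atomic iff every $A\in\ox$ with $\cli(A)\neq 0$ contains an atom for $\cli$. Using Proposition~\ref{zero sets of cli and piE are same} to match the sets $A$ with $\cli(A)\neq 0$ to the sets with $(\pi E)(A)\neq 0$, together with the first assertion to match atoms of $\cli$ with atoms of $\pi E$, we conclude that $\cli$ is atomic iff $\pi E$ is atomic. Identically, $\cli$ has no atom iff $\pi E$ has no atom, so $\cli$ is non-atomic iff $\pi E$ is non-atomic.

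I do not anticipate a real obstacle: the whole content is the bookkeeping of Proposition~\ref{zero sets of cli and piE are same} plus additivity. The only point requiring mild care is that equality and vanishing of CP instruments are tested setwise on the difference set $A\setminus B$, which is legitimate because CP-map-valued measures are finitely additive and their values lie in the vector space of bounded linear maps $\cla\to\bh$, so $\cli(B)=\cli(A)\iff\cli(A\setminus B)=0$ and similarly for $\pi E$.
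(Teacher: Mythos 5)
Your proposal is correct and follows essentially the same route as the paper: both rewrite the atom condition via finite additivity as ``$\cli(A)\neq 0$ and for each $B\subseteq A$ either $\cli(B)=0$ or $\cli(A\setminus B)=0$,'' and then transfer every vanishing condition through Proposition~\ref{zero sets of cli and piE are same}. The deduction of the ``in particular'' part is likewise identical, so there is nothing to add.
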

	\begin{proof}
		For any subset $ A\in\ox$,  $ A$ is an atom for $\cli$ if and  only
		if $\cli(A)\neq 0$ and for each $ A'\subseteq A$ in $\ox$, we have either $\cli( A')=0$ or $\cli( A\setminus
		A')=0$. Equivalently,  $(\pi E)(A)\neq 0$ and we have  either $(\pi E)( A')=0$ or $(\pi E)( A\setminus
		A')=0$. From Proposition \ref{zero sets of cli and piE are same} this  is same as saying that $ A$ is an atom for $\pi E$. The second assertion  follows easily from the first.
	\end{proof}

	\section{Extremality and \texorpdfstring{$\cst$}{C*}-Convexity of Instruments}\label{Extremality and $\cst$-Convexity of Instruments}
	\subsection{Extreme instruments}
	
	The convexity structure of unital completely positive (UCP) maps and normalized POVMs has attracted considerable attention in the literature. It was W. Arveson who first established an abstract characterization of extreme UCP maps, presented as Theorem 1.4.6 in \cite{arvesonsubalgebra}. Later, M. D. Choi revisited this result in the setting of matrix algebras \cite{Choi}. For characterizations of extreme points of normalized POVMs, we refer to \cite{pellonapaa1}, \cite{Douglas_Plosker_and_Smith}, and \cite{Heinosaari_Teiko_and_Pellonpa}.
    
	The notion of quantum instruments includes UCP maps and POVMs as special cases and it is worth looking at their convexity structure and extreme points. The set $I_{\clh}(X,\cla)$, consisting of all normalized CP instruments on 
	X with values in $CP(\cla,\bh)$, clearly forms a convex set. Motivated by Arveson’s abstract characterization of extreme UCP maps in \cite{arvesonsubalgebra}, one finds that extreme CP instruments admit a similar form of abstract characterization. This has already been observed by several authors: in particular, Pellonpää \cite{pellonapaa1} studied this using the framework of direct integrals, and D’Ariano et al. \cite{Ariano} discussed it in the finite-dimensional setting. We state the result below in our general setting without proof, as its derivation follows similar lines of reasoning to those used in Theorem 1.4.6 of \cite{arvesonsubalgebra}.

	\begin{thm}[Extreme point condition]\label{thm:extrme point criterion for instruments}
		Suppose that $\cli\in I_{\clh}(X,\cla)$  has  the minimal dilation tuple $(\clk,\pi,E,V)$.  Then a necessary and sufficient criterion
		for  $\cli$ to be extreme in $I_{\clh}(X,\cla)$ is that the map $D\mapsto V^*DV$
		from  $\{\pi(\cla)E(\ox)\}'$ to $\bh $ is injective.
	\end{thm}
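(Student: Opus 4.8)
The plan is to mimic Arveson's proof of the analogous statement for UCP maps (Theorem 1.4.6 in \cite{arvesonsubalgebra}), using the Radon--Nikodym-type description from Theorem \ref{radonnikodymthm} to translate convex decompositions of $\cli$ into operators in the commutant $\{\pi(\cla)E(\ox)\}'$. The key bookkeeping is that the Radon--Nikodym derivative sets up a bijective correspondence between instruments dominated by $\cli$ and positive contractions in the commutant, and we need to see how the normalization constraint $\cli(X)(1_\cla)=I_\clh$ interacts with this correspondence via the map $D\mapsto V^*DV$.

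\textbf{Step 1: The backward direction.} First I would assume the map $D\mapsto V^*DV$ on $\{\pi(\cla)E(\ox)\}'$ is injective and show $\cli$ is extreme. Suppose $\cli=\tfrac12(\cli_1+\cli_2)$ with $\cli_1,\cli_2\in I_\clh(X,\cla)$. Then $\tfrac12\cli_1\leq\cli$, so by Theorem \ref{radonnikodymthm} there is a positive contraction $D\in\{\pi(\cla)E(\ox)\}'$ with $\tfrac12\cli_1(A)(a)=V^*D\pi(a)E(A)V$ for all $A,a$. Applying this at $A=X$, $a=1_\cla$ and using that $\cli_1$ is normalized gives $V^*DV=\tfrac12 V^*V=\tfrac12 I_\clh$ (here $V$ is an isometry since $\cli$ is normalized). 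But the constant operator $\tfrac12 I_\clk$ lies in the commutant and also maps to $\tfrac12 I_\clh$ under $D\mapsto V^*DV$; by injectivity $D=\tfrac12 I_\clk$, hence $\cli_1(A)(a)=V^*\pi(a)E(A)V=\cli(A)(a)$, so $\cli_1=\cli$ and likewise $\cli_2=\cli$.

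\textbf{Step 2: The forward direction.} Conversely, assume $\cli$ is extreme and suppose $D\in\{\pi(\cla)E(\ox)\}'$ is self-adjoint with $V^*DV=0$; I want $D=0$. By scaling we may assume $\|D\|\leq\tfrac12$, so that $D_\pm:=\tfrac12 I_\clk\pm D$ are positive contractions in the commutant. Define $\cli_\pm(A)(a):=V^*D_\pm\pi(a)E(A)V$. By Theorem \ref{radonnikodymthm} these are instruments dominated by $\cli$, and $\cli_+ +\cli_- = V^*\pi(\cdot)E(\cdot)V = \cli$ (since $D_+ + D_- = I_\clk$). Moreover $\cli_\pm(X)(1_\cla)=V^*D_\pm V=\tfrac12 V^*V\pm V^*DV=\tfrac12 I_\clh$, so $2\cli_\pm$ are normalized, i.e.\ $2\cli_\pm\in I_\clh(X,\cla)$, and $\cli=\tfrac12(2\cli_+)+\tfrac12(2\cli_-)$. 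Extremality forces $2\cli_+=2\cli_-=\cli$, hence $V^*D\pi(a)E(A)V=0$ for all $a\in\cla$, $A\in\ox$. Finally, to conclude $D=0$, I would use the minimality condition $[\pi(\cla)E(\ox)V\clh]=\clk$: for $h,k\in\clh$, $a,b\in\cla$, $A,B\in\ox$, compute $\langle \pi(a)E(A)Vh, D\,\pi(b)E(B)Vk\rangle = \langle Vh, \pi(a^*)E(A)\,D\,\pi(b)E(B)Vk\rangle$, and since $D$ commutes with $\pi(\cla)$ and $E(\ox)$ this equals $\langle Vh, D\,\pi(a^*b)E(A\cap B)Vk\rangle = \langle h, V^*D\,\pi(a^*b)E(A\cap B)Vk\rangle = 0$. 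As such vectors are total in $\clk$, $D=0$. The general (non-self-adjoint) case follows by splitting into real and imaginary parts, both of which lie in the commutant.

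\textbf{Main obstacle.} The only delicate point is making sure the normalization constraint is handled correctly throughout: one must consistently use that $V$ is an isometry (from Theorem \ref{thm : Bi-dilation theorem}, since $\cli$ is normalized) so that $V^*V=I_\clh$, and verify that the perturbed instruments $2\cli_\pm$ genuinely land in $I_\clh(X,\cla)$ rather than merely in $Ins_\clh(X,\cla)$ --- this is exactly what the computation $\cli_\pm(X)(1_\cla)=\tfrac12 I_\clh$ guarantees. Everything else is a routine transcription of the commutant-derivative dictionary supplied by Theorem \ref{radonnikodymthm} together with the minimality of the bi-dilation.
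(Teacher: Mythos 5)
Your proof is correct and is precisely the Arveson-style argument (via the Radon--Nikodym correspondence of Theorem \ref{radonnikodymthm} and the minimality of the bi-dilation) that the paper itself invokes when it states this theorem without proof, citing Theorem 1.4.6 of Arveson. Both directions check out, including the normalization bookkeeping showing $2\cli_\pm\in I_\clh(X,\cla)$ and the totality argument that upgrades $V^*D\pi(a)E(A)V=0$ to $D=0$.
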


	As a direct consequence of Theorem \ref{thm:extrme point criterion for instruments}, we obtain the following corollary.

	\begin{crlre}
		Every spectral instrument is an extreme point in $I_{\clh}(X,\cla).$
	\end{crlre}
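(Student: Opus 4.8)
The plan is to apply Theorem \ref{thm:extrme point criterion for instruments} directly. First I would fix a spectral instrument $\cli = \pi E \colon \ox \to CP(\cla,\bh)$, so that $\cli(A,a) = \pi(a)E(A)$ for all $a \in \cla$, $A \in \ox$, where $\pi \colon \cla \to \bh$ is a unital $*$-homomorphism and $E \colon \ox \to \bh$ is a spectral measure with $\pi(a)E(A) = E(A)\pi(a)$. The key observation is that $\cli$ serves as its own minimal bi-dilation: take $\clk = \clh$, the representation $\pi$ itself, the spectral measure $E$ itself, and $V = I_{\clh}$. Indeed, $V^*\pi(a)E(A)V = \pi(a)E(A) = \cli(A,a)$, and the minimality condition $[\pi(\cla)E(\ox)V\clh] = \clh$ holds trivially since $V\clh = \clh$. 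By the uniqueness clause of the Bi-dilation Theorem \ref{thm : Bi-dilation theorem}, this is (up to unitary equivalence) the minimal bi-dilation tuple of $\cli$.

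With this identification, the extreme point criterion of Theorem \ref{thm:extrme point criterion for instruments} requires checking that the map $D \mapsto V^*DV = I_{\clh}^* D I_{\clh} = D$ from $\{\pi(\cla)E(\ox)\}'$ to $\bh$ is injective. But this map is just the identity (equivalently, the inclusion of the commutant into $\bh$), which is manifestly injective. Hence $\cli$ is extreme in $I_{\clh}(X,\cla)$.

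There is essentially no obstacle here; the entire content is recognizing that a spectral instrument is already in dilated form, so that $V$ is the identity and the compression map of Theorem \ref{thm:extrme point criterion for instruments} is trivially injective. The one point that warrants a line of justification is why $(\clh, \pi, E, I_{\clh})$ genuinely \emph{is} the minimal bi-dilation — this follows from the commutation relation $\pi(a)E(A) = E(A)\pi(a)$ (which is part of the hypothesis that $\cli$ is spectral, or can be extracted from Theorem \ref{characterization of spectral}) together with the uniqueness statement in Theorem \ref{thm : Bi-dilation theorem}. Once that is in place, injectivity of $D \mapsto D$ closes the argument immediately.
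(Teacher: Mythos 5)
Your proposal is correct and is precisely the argument the paper intends: it presents this corollary as a direct consequence of Theorem \ref{thm:extrme point criterion for instruments}, with the minimal bi-dilation of a spectral instrument being $(\clh,\pi,E,I_{\clh})$ (the same identification the paper makes explicitly when proving the analogous $C^*$-extremity corollary), so that $D\mapsto V^*DV$ is the inclusion of the commutant and hence injective. No gaps.
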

	
    Next, we present a characterization of extremity, in terms of the Choi–Kraus decomposition (Proposition \ref{Choi-Kraus}), for CP instruments defined on finite sets. This result follows as a consequence of Theorem \ref{thm:extrme point criterion for instruments} and extends Choi’s characterization of extreme UCP maps on matrix algebras ( Theorem 5 of \cite{Choi}).

	\begin{crlre}\label{Choi-Kraus characterization of extreme instruments}
		Let $\cli:\ox\to CP(M_d, M_k)$ be a CP instrument, where $X=\{1,\cdots,n\}$ be a finite set. Suppose, $$\cli(i,a)=\sum_j{V_j^i}^*a{V_j^i},~\forall~a \in M_d,$$ is a minimal Choi–Kraus decomposition of $\cli,$ with $\{{V_j^i}\}$ a set of matrices in $M_{k \times d}$ for each $i.$  Then $\cli$ is extreme if and only if the set $\{{V_j^i}^*{V_j^i}\}$ is linearly independent for each $i\in\{1,\cdots,n\}$.
	\end{crlre}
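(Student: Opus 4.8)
The plan is to reduce the problem to Theorem \ref{thm:extrme point criterion for instruments} by making the Choi--Kraus data explicit at the level of the minimal bi-dilation quadruple. First I would assemble, for each $i\in\{1,\dots,n\}$, the isometry-type dilation data: given a minimal Choi--Kraus decomposition $\cli(i,a)=\sum_{j=1}^{l_i}{V^i_j}^*a\,V^i_j$ with $\{V^i_j\}_j\subset M_{k\times d}$ linearly independent, the Hilbert space of the minimal bi-dilation can be taken to be $\clk=\bigoplus_{i=1}^n(\bbc^d\otimes\bbc^{l_i})$, with $\pi(a)=\bigoplus_i(a\otimes I_{l_i})$, with $E(\{i\})$ the orthogonal projection onto the $i$-th summand $\bbc^d\otimes\bbc^{l_i}$ (extended additively to all subsets of $X$, so $E$ is automatically a spectral measure commuting with $\pi$), and with $V\colon\bbc^k\to\clk$ given on the $i$-th block by $Vh=\sum_j (V^i_j h)\otimes e^i_j$. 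One checks $V^*\pi(a)E(\{i\})V=\sum_j{V^i_j}^*a\,V^i_j=\cli(i,a)$, and the minimality condition $[\pi(M_d)E(\ox)V\clk]=\clk$ holds precisely because the $\{V^i_j\}_j$ are linearly independent for each $i$ (so their ranges together span the whole $i$-th block after applying $M_d$). Hence this is \emph{the} minimal bi-dilation quadruple, by its uniqueness in Theorem \ref{thm : Bi-dilation theorem}.

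Next I would describe the commutant $\{\pi(M_d)E(\ox)\}'$ concretely. Since $E$ is the block decomposition, $\{E(\ox)\}'=\bigoplus_i \clb(\bbc^d\otimes\bbc^{l_i})$, and intersecting with $\{\pi(M_d)\}'$ (which forces each block to act as $I_d\otimes(\text{something})$, by the standard commutant computation for $a\mapsto a\otimes I$) gives
\begin{equation*}
\{\pi(M_d)E(\ox)\}'=\bigoplus_{i=1}^n \big(I_d\otimes M_{l_i}\big)\cong\bigoplus_{i=1}^n M_{l_i}.
\end{equation*}
So an element $D$ of this commutant is a tuple $(D^1,\dots,D^n)$ with $D^i=[d^i_{rs}]\in M_{l_i}$, and a direct computation gives
\begin{equation*}
V^*DV=\sum_{i=1}^n\sum_{r,s=1}^{l_i} d^i_{rs}\,{V^i_r}^*V^i_s.
\end{equation*}
By Theorem \ref{thm:extrme point criterion for instruments}, $\cli$ is extreme iff the map $D\mapsto V^*DV$ is injective on this space, i.e. iff the only tuple $(D^1,\dots,D^n)$ with $\sum_i\sum_{r,s}d^i_{rs}\,{V^i_r}^*V^i_s=0$ is the zero tuple.

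The final step is to match this injectivity condition with linear independence of $\{{V^i_j}^*V^i_j\}_j$ for each $i$. In one direction: if each family $\{{V^i_r}^*V^i_s\}_{r,s}$ is linearly independent (which, as I will note, follows from — and in the self-adjoint diagonal case is genuinely equivalent to — linear independence of the diagonal terms $\{{V^i_j}^*V^i_j\}_j$ via a polarization/Gram-matrix argument, since linear independence of $\{V^i_j\}_j$ already makes the full family $\{{V^i_r}^*V^i_s\}$ linearly independent in $M_d$), then the vanishing of $\sum_{r,s}d^i_{rs}\,{V^i_r}^*V^i_s$ across disjoint blocks forces each $D^i=0$; conversely, a nontrivial linear dependence among $\{{V^i_j}^*V^i_j\}_j$ for some fixed $i$ produces a nonzero \emph{diagonal} $D^i$ (hence a nonzero element of the commutant) in the kernel. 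The main obstacle — and the place the argument needs care — is precisely this equivalence between linear independence of the diagonal products $\{{V^i_j}^*V^i_j\}$ and the injectivity statement, which a priori involves all products ${V^i_r}^*V^i_s$: one must argue that linear independence of $\{V^i_j\}_j$ (guaranteed by minimality) upgrades linear independence of the diagonal terms to linear independence of the whole family, exactly as in the proof of Choi's theorem for UCP maps (Theorem 5 of \cite{Choi}), to which this corollary reduces blockwise. I expect the cleanest route is to invoke that blockwise reduction to Choi's theorem directly rather than re-deriving the polarization lemma.
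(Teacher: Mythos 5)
Your construction of the minimal bi-dilation ($\clk=\bigoplus_i(\bbc^d\otimes\bbc^{l_i})$, $\pi(a)=\bigoplus_i(a\otimes I_{l_i})$, $E(\{i\})$ the block projections, $V$ assembled from the Kraus operators), the identification of the commutant as $\bigoplus_i(I_d\otimes M_{l_i})$, and the formula $V^*DV=\sum_i\sum_{r,s}d^i_{rs}\,{V^i_r}^*V^i_s$ are all correct, and this is surely the route intended by the authors (the paper gives no written proof, only a pointer to Theorem \ref{thm:extrme point criterion for instruments}). What your computation actually establishes is: $\cli$ is extreme if and only if the family $\{{V^i_r}^*V^i_s : 1\le i\le n,\ 1\le r,s\le l_i\}$ is linearly independent in $M_k$ \emph{as a whole}, jointly over all $i$. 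The proof should stop there. The final paragraph, where you try to match this with the condition as literally stated, contains two genuine errors.

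First, the equation $\sum_i\sum_{r,s}d^i_{rs}\,{V^i_r}^*V^i_s=0$ does not decouple into one equation per $i$: the blocks are orthogonal in $\clk$, but the compression $D\mapsto V^*DV$ sends all of them into the same copy of $M_k$, where contributions from different $i$ can cancel. Concretely, take $X=\{1,2\}$, $d=k=2$, and $\cli(1,a)=\cli(2,a)=\tfrac12 a$, so $V^1_1=V^2_1=\tfrac1{\sqrt2}I$. Each per-$i$ family is a single nonzero matrix, hence linearly independent, yet $\cli=\tfrac12(\clj_1+\clj_2)$ with $\clj_1(1,a)=a$, $\clj_1(2,a)=0$ and $\clj_2$ the swap, so $\cli$ is not extreme; thus no per-$i$ condition can be sufficient. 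Second, the parenthetical claim that linear independence of $\{V^i_j\}_j$, or of the diagonal products $\{{V^i_j}^*V^i_j\}_j$, ``upgrades'' to linear independence of all products $\{{V^i_r}^*V^i_s\}_{r,s}$ is false, and it is not what Choi's Theorem 5 asserts (Choi's criterion is stated directly in terms of the full family of products). For the pinching map $a\mapsto E_{11}aE_{11}+E_{22}aE_{22}$ one has $V_1=E_{11}$, $V_2=E_{22}$ linearly independent and $V_1^*V_1=E_{11}$, $V_2^*V_2=E_{22}$ linearly independent, yet $V_1^*V_2=0$, the full family is dependent, and the map is not extreme --- the paper itself uses this map as an example of a non-extreme UCP marginal. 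So the only reading under which the corollary is true is that the set of \emph{all} products ${V^i_r}^*V^i_s$, taken over all $i$ simultaneously, is linearly independent; your middle computation proves exactly that, and the concluding ``matching'' step should be deleted rather than repaired.
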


	Corollary \ref{Choi-Kraus characterization of extreme instruments} enables the construction of numerous extreme instruments that are not spectral in nature. As an illustration, consider the following example of a unital completely positive (UCP) instrument that is extreme, yet not spectral.

	\begin{xmpl}\label{exmp: extreme instrument with non-extreme marginals}
		Fix $0<t<1.$ Let $\mu:\ox\to M_2$ be a POVM on $X=\{1,2\},$ defined by $$\mu(1)=t E_{11} +(1-t)E_{22},~~\mu(2)=(1-t)E_{11} + tE_{22}.$$ where $E_{ii}$ denote the matrix unit with $1$ in the $(i,i)$-th entry and $0$ elsewhere. Consider the instrument $\cli:\ox\to CP(M_2, M_2) $ given by,
		$$\cli(i)(A)=\mu(i)^\frac{1}{2}A\mu(i)^\frac{1}{2},~\text{for all}~A\in M_2.$$ \end{xmpl}
It follows immediately from Corollary \ref{Choi-Kraus characterization of extreme instruments} that $\cli$ is an extreme unital completely positive (UCP) instrument.
	Moreover, observe that the POVM marginal  $\mu_\cli$ of 
	$\cli$ coincides with the original POVM $\mu$. Since 
	$\mu$ is not spectral, it follows that $\cli$ cannot be a spectral instrument either.

	 Here is an alternative abstract characterization of the extremity of instruments, in terms of the natural partial order on instruments. Recall that an instrument $\clj $ is {\em dominated} by an instrument $\cli $, if $\cli (A)-\clj (A)$  is CP for every $A$ in $\ox .$
	
	\begin{thm}\label{thm: extreme point condition by Bhat}
		Let $\cli:\ox\to CP(\cla,\bh)$ be a UCP instrument. Then $\cli$ is extreme if and only if   $\clj_1=\clj_2$, for any two CP instruments $\clj_1,\clj_2$ dominated by $\cli$ with  $\clj_1(X)=\clj_2(X)$.  	\end{thm}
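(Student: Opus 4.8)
The plan is to deduce Theorem~\ref{thm: extreme point condition by Bhat} from the abstract extreme point criterion in Theorem~\ref{thm:extrme point criterion for instruments}, by passing through the Radon--Nikodym type theorem (Theorem~\ref{radonnikodymthm}). Let $(\clk,\pi,E,V)$ be the minimal bi-dilation quadruple of the UCP instrument $\cli$; since $\cli$ is unital, $V$ is an isometry. Theorem~\ref{thm:extrme point criterion for instruments} says $\cli$ is extreme if and only if the compression map $\Phi\colon D\mapsto V^*DV$ from $\{\pi(\cla)E(\ox)\}'$ to $\bh$ is injective. The task is to translate this injectivity statement into the domination-and-equal-total-value condition on instruments.

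First I would prove the ``only if'' direction. Suppose $\cli$ is extreme, and let $\clj_1,\clj_2\le\cli$ be CP instruments with $\clj_1(X)=\clj_2(X)$. By Theorem~\ref{radonnikodymthm}, there exist positive contractions $D_1,D_2\in\{\pi(\cla)E(\ox)\}'$ with $\clj_i(A,a)=V^*D_i\pi(a)E(A)V$ for $i=1,2$. Evaluating the associated CP maps at $A=X$, $a=1_\cla$ gives $\phi_{\clj_i}(1_\cla)=\clj_i(X)(1_\cla)=V^*D_iV$, so the hypothesis $\clj_1(X)=\clj_2(X)$ forces $V^*D_1V=V^*D_2V$, i.e. $\Phi(D_1)=\Phi(D_2)$. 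By injectivity of $\Phi$, $D_1=D_2$, hence $\clj_1=\clj_2$. For the converse, suppose the domination condition holds and let $D\in\{\pi(\cla)E(\ox)\}'$ be self-adjoint with $V^*DV=0$; I want $D=0$. Write $D=D_+-D_-$ as difference of positive parts, which still lie in the commutant (a von Neumann algebra), and rescale so that $\|D_\pm\|\le 1$. Define $\clj_\pm(A,a):=V^*D_\pm\pi(a)E(A)V$; these are CP instruments dominated by $\cli$ (again by Theorem~\ref{radonnikodymthm}, in the form that a positive contraction in the commutant yields a dominated instrument), and $\clj_+(X)(1_\cla)-\clj_-(X)(1_\cla)=V^*DV=0$, so $\clj_+(X)=\clj_-(X)$ after checking they agree as CP maps, not just at $1_\cla$ --- but here one uses that $\clj_\pm(X)(a)=V^*D_\pm\pi(a)V$ and $V^*D_+\pi(a)V-V^*D_-\pi(a)V=V^*D\pi(a)V$; this last need not vanish just from $V^*DV=0$, so a little care is needed (see below). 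Once one arranges $\clj_+(X)=\clj_-(X)$, the hypothesis gives $\clj_+=\clj_-$, so $V^*D_+\pi(a)E(A)V=V^*D_-\pi(a)E(A)V$ for all $a,A$; by the minimality condition $[\pi(\cla)E(\ox)V\clh]=\clk$ together with $D_\pm\in\{\pi(\cla)E(\ox)\}'$, a standard polarization/density argument forces $D_+=D_-$, hence $D=0$, and so $\Phi$ is injective and $\cli$ is extreme.

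The main obstacle is exactly the gap flagged above: $V^*DV=0$ does not immediately give $V^*D\pi(a)V=0$ for all $a$, which is what is needed to make the two dominated instruments have the same CP marginal (not merely the same value at $1_\cla$). The cleanest fix is to not split $D$ into $\pm$ parts at the level of $V^*(\cdot)V$ but to work directly: given two instruments $\clj_1\le\cli$, $\clj_2\le\cli$ with $\clj_1(X)=\clj_2(X)$ as \emph{CP maps}, their Radon--Nikodym derivatives $D_1,D_2$ satisfy $V^*D_1\pi(a)V=V^*D_2\pi(a)V$ for all $a\in\cla$; setting $D=D_1-D_2\in\{\pi(\cla)E(\ox)\}'$ one has $\langle Vh, D\pi(a)Vk\rangle=0$ for all $a,h,k$, and since $D$ commutes with $\pi(\cla)E(\ox)$ one gets $\langle \pi(b)E(B)Vh,\,D\,\pi(a)E(A)Vk\rangle = \langle Vh,\,D\,\pi(b^*a)E(B\cap A)Vk\rangle=0$ for all $a,b,A,B,h,k$, so by minimality $D=0$ on $\clk$. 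This shows the domination condition is equivalent to injectivity of $\Phi$ on the real subspace of self-adjoint elements, hence on all of $\{\pi(\cla)E(\ox)\}'$, matching Theorem~\ref{thm:extrme point criterion for instruments}. I would organize the write-up so that this minimality/commutant density computation is done once and invoked in both directions.
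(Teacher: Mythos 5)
Your route (Radon--Nikodym derivatives plus the injectivity criterion of Theorem~\ref{thm:extrme point criterion for instruments}) is genuinely different from the paper's, which never touches the dilation: the paper simply perturbs, writing $\cli=\tfrac12(\cli-\clj_1+\clj_2)+\tfrac12(\cli-\clj_2+\clj_1)$ for one direction and $\clj_i=\cli-(p_1\wedge p_2)\cli_i$ for the other. Your forward direction ($\cli$ extreme $\Rightarrow$ the domination condition) is correct. But the converse as you finally organize it has a genuine error. Note first that the hypothesis must be read as equality of the \emph{operators} $\clj_1(X)(1_\cla)=\clj_2(X)(1_\cla)$: if one insists on equality of $\clj_1(X)$ and $\clj_2(X)$ as CP maps, then in the degenerate case $\ox=\{\emptyset,X\}$ the condition is vacuously satisfied by every UCP map, so the theorem would be false. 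Under the operator reading, your \emph{original} splitting argument ($D=D_+-D_-$, $\clj_\pm=V^*D_\pm\pi(\cdot)E(\cdot)V$) goes through with no gap: $\clj_+(X)(1_\cla)=\clj_-(X)(1_\cla)$ is exactly what $V^*DV=0$ gives, the hypothesis then yields $\clj_+=\clj_-$, and minimality plus $D\in\{\pi(\cla)E(\ox)\}'$ forces $D_+=D_-$. The obstacle you flagged is not an obstacle.

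The ``fix'' you substitute instead is wrong. From $\clj_1(X)=\clj_2(X)$ as CP maps you only know $V^*D\pi(c)E(X)V=0$; the identity
\[
\langle \pi(b)E(B)Vh,\,D\,\pi(a)E(A)Vk\rangle=\langle Vh,\,D\,\pi(b^*a)E(B\cap A)Vk\rangle
\]
is correct, but the right-hand side involves $E(B\cap A)$ for \emph{arbitrary} $A,B$, which the hypothesis does not control (it only controls $B\cap A=X$). So you cannot conclude it vanishes, and $D=0$ does not follow. Indeed, if that step were valid it would prove that any two instruments dominated by \emph{any} $\cli$ with equal CP marginals coincide --- already false for POVMs ($\cla=\bbc$), where it would say every normalized POVM is extreme. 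Keep your first argument, drop the fix, and state the hypothesis at the level of $\clj_i(X)(1_\cla)$; alternatively, adopt the paper's two-line convexity argument, which avoids the dilation machinery entirely.
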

	\begin{proof}
		To prove the `if' part, we consider the UCP instruments, $\cli_1=\cli-\clj_1+\clj_2$ and $\cli_2=\cli-\clj_2+\clj_1$. It is clear that if $\clj_1\neq\clj_2,$ then we have a proper convex combination for the extreme instrument $\cli.$ 
		Next we assume the converse part in the hypothesis. If $\cli$ is not extreme then there exists a proper convex combination, $\cli=\sum p_i\cli_i,$ for $\cli.$ Consider the instruments, $\clj_1=\cli-(p_1\wedge p_2)\cli_1$ and $\clj_2=\cli-(p_1\wedge p_2)\cli_2,$ where $p_1\wedge p_2=\text{min}\{p_1, p_2\}.$ Then it is straight forward to verify that $\clj_1(X)=\clj_2(X)$ and both of them are dominated by $\cli.$ By the hypothesis, $\clj_1=\clj_2,$ which further implies that $\cli_1=\cli_2.$ Therefore, $\cli$ must be extreme.
	\end{proof}

	\subsection{\texorpdfstring{$\cst$}{C*}-extreme instruments}
	Inspired by the literature on $\cst$-convexity of POVMs and unital completely positive (UCP) maps, we introduce the concepts of $\cst$-convexity and $\cst$-extreme points in the framework of instruments. In this context, we also explore abstract characterizations of $\cst$-extremity for instruments. To begin with, we formally define C*-convexity for the collection of all normalized instruments $I_\mathcal{H}(X,\cla).$
	
	\begin{dfn} [$C^*$-convexity]\label{cstarconvexity}
		For any $\cli_i\in I_\clh(X,\cla)$ and $T_i\in\bh$, $1\leq i\leq  n$ with $\sum_{i=1}^n T_i^*T_i=I_\mathcal{H}$, a sum of the form
		\begin{equation}\label{C^*sum}
			\cli(\cdot)=   \sum_{i=1}^nT_i^*\cli_i(\cdot)T_i
		\end{equation}
		is called a \emph{$\cst$-convex combination}\index{$C^*$-convex combination} for $\cli$. The operators $T_i$'s here
		are called {\em $\cst$-coefficients}\index{$C^*$-coefficients}. When $T_i$'s are invertible,
		the sum in \eqref{C^*sum} is called a
		\emph{proper $\cst$-convex combination}\index{$C^*$-convex combination!proper-} for $\cli$.
	\end{dfn}
	
	Observe that $I_\clh(X,\cla)$ is a {\em $\cst$-convex set}\index{$C^*$-convex set} in the sense that it is closed
	under $\cst$-convex   combinations i.e. $\sum_{i=1}^n{T_i}^*\cli_i(\cdot)T_i\in I_\clh(X,\cla)$, whenever $\cli_i\in I_\clh(X,\cla)$ and $T_i\in\clb(\clh)$ satisfying $\sum_{i=1}^n{T_i}^*T_i=I_\clh$. 
	
	\begin{dfn}[$C^*$-extreme point]\label{cstarextreme}
		An instrument $\cli:\ox\to CP(\cla,\bh)$ is called a \emph{$C^*$-extreme point}\index{$C^*$-extreme point} in
		$I_\clh(X,\cla)$ if, whenever  $\sum_{i=1}^nT_i^*\cli_i(\cdot)T_i$
		is a proper $C^*$-convex combination of $\cli$, then each $\cli_i$ is unitarily equivalent to $\cli$ i.e. there are unitary operators $U_i\in\clb(\clh)$
		such that $\cli_i(\cdot)=U_i^*\cli(\cdot)U_i $ for $1\leq i\leq n.$
	\end{dfn}

	\subsection{Abstract characterizations of \texorpdfstring{$\cst$}{C*}-extreme points}
	Farenick and Zhou \cite{farenickpams}, taking cue from Arveson's extreme point criteria for UCP maps, introduced an abstract characterization of $\cst$-extreme points for unital completely positive maps. Later, Bhat and Kumar \cite{manishjfa}, adapted this characterization to the setting of $\cst$-extreme points of normalized POVMs. In this work, we extend these ideas further by presenting an abstract characterization of C*-extreme instruments. Our proof is a direct adaptation of the method used for the POVM case   \cite{manishjfa} and we skip it.

	\begin{thm}\label{abstractcharacterizationinstrument}
		Let $\cli:\ox \to CP(\cla,\bh)$ be a CP instrument with the minimal bi-dilation quadruple $(\clk,\pi,E,V)$. Then $\cli$ is a $C^*$-extreme point in $I_\mathcal{H}(X,\cla)$ if and only if for any positive operator $D\in \{\pi(\cla)E(\ox)\}'$ with $V^*DV$ being invertible, there exists a co-isometry $U\in \{\pi(\cla)E(\ox)\}'$
		(i.e. $UU^*=I_{\mathcal{H}})$ satisfying $U^*U\sqd=\sqd$ and an invertible operator
		$S\in\bh$ such that $UD^{1/2}V=VS$.
	\end{thm}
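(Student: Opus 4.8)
The plan is to adapt the POVM proof of Bhat and Kumar \cite{manishjfa} to the instrument setting, exploiting the bi-dilation Theorem \ref{thm : Bi-dilation theorem} and the Radon--Nikodym Theorem \ref{radonnikodymthm} in exactly the same bookkeeping role that the Naimark dilation plays for POVMs. First I would prove the ``if'' direction: suppose the compression-type condition holds and let $\cli = \sum_{i=1}^n T_i^*\cli_i(\cdot)T_i$ be a proper $C^*$-convex combination, with each $T_i$ invertible. For each fixed $i$, the instrument $A\mapsto T_i^*\cli_i(A)T_i$ is dominated by $\cli$, so by Theorem \ref{radonnikodymthm} there is a positive contraction $D_i\in\{\pi(\cla)E(\ox)\}'$ with $T_i^*\cli_i(A,a)T_i = V^*D_i\,\pi(a)E(A)\,V$; evaluating at $A=X$, $a=1_\cla$ gives $T_i^*T_i = V^*D_iV$, which is invertible. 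Applying the hypothesis to $D_i$ yields a co-isometry $U_i\in\{\pi(\cla)E(\ox)\}'$ with $U_i^*U_i D_i^{1/2}=D_i^{1/2}$ and an invertible $S_i\in\bh$ with $U_i D_i^{1/2}V = V S_i$. Then for all $a\in\cla$, $A\in\ox$,
\begin{align*}
T_i^*\cli_i(A,a)T_i &= V^*D_i^{1/2}\pi(a)E(A)D_i^{1/2}V = V^*D_i^{1/2}U_i^*\pi(a)E(A)U_iD_i^{1/2}V\\
&= S_i^* V^*\pi(a)E(A) V S_i = S_i^*\cli(A,a)S_i,
\end{align*}
where the commutation of $U_i$ and $U_i^*$ with $\pi(\cla)E(\ox)$ was used. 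Hence $\cli_i(\cdot) = (T_i^{-1})^*S_i^*\cli(\cdot)S_iT_i^{-1}$; taking $A=X$, $a=1_\cla$ shows $(S_iT_i^{-1})^*(S_iT_i^{-1})=I_\clh$, so $W_i:=S_iT_i^{-1}$ is an isometry, and since $\cli_i(X,1_\cla)=I_\clh$ forces $W_i$ onto (a standard argument: $W_iW_i^*$ is a projection dominating $\cli(X,1_\cla)$-supported vectors, or more directly the Stinespring-type rigidity), $W_i$ is unitary and $\cli_i(\cdot)=W_i^*\cli(\cdot)W_i$, proving $C^*$-extremity.

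For the ``only if'' direction I would argue contrapositively in the usual way: given a positive $D\in\{\pi(\cla)E(\ox)\}'$ with $V^*DV$ invertible, set $\clj(A,a):=V^*D\,\pi(a)E(A)V$, a dominated instrument with $\clj(X,1_\cla)=V^*DV$ invertible, and similarly $\clj'(A,a):=V^*(I-D)\pi(a)E(A)V$ with $\clj'(X,1_\cla)=I-V^*DV$. One normalizes these: put $\cli_1(\cdot):=(V^*DV)^{-1/2}\clj(\cdot)(V^*DV)^{-1/2}$ and, provided $I-V^*DV$ is invertible, $\cli_2(\cdot):=(I-V^*DV)^{-1/2}\clj'(\cdot)(I-V^*DV)^{-1/2}$ (the non-invertible boundary case is handled by a perturbation/limiting argument exactly as in \cite{manishjfa}, replacing $D$ by $D_\eps=(1-\eps)D+\eps(I-D)$ or working on the appropriate range projection). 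Then $\cli = (V^*DV)^{1/2}\,\cli_1(\cdot)\,(V^*DV)^{1/2} + (I-V^*DV)^{1/2}\,\cli_2(\cdot)\,(I-V^*DV)^{1/2}$ is a proper $C^*$-convex combination, so $C^*$-extremity gives a unitary $U_0\in\bh$ with $\cli_1(\cdot)=U_0^*\cli(\cdot)U_0$. Unravelling this identity $V^*D^{1/2}\pi(a)E(A)D^{1/2}V = (V^*DV)^{1/2}U_0^*V^*\pi(a)E(A)VU_0(V^*DV)^{1/2}$ and comparing the two minimal bi-dilations of the same instrument (the one built from $\lambda(A,a,h)\mapsto \pi(a)E(A)D^{1/2}V h$ versus the one built from $VU_0(V^*DV)^{1/2}$), the uniqueness clause of Theorem \ref{thm : Bi-dilation theorem} produces a partial isometry $U\in\{\pi(\cla)E(\ox)\}'$ intertwining the two dilating maps; chasing ranges shows $U$ can be taken a co-isometry with $U^*UD^{1/2}=D^{1/2}$, and reading off the map on $V\clh$ gives the invertible $S\in\bh$ (essentially $S=U_0(V^*DV)^{1/2}$) with $UD^{1/2}V=VS$.

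The main obstacle I anticipate is the same one that appears in the POVM proof: handling the case where $I-V^*DV$ fails to be invertible (equivalently, $D^{1/2}V\clh$ and $(I-D)^{1/2}V\clh$ do not fill out the available space), so that the naive normalization $\cli_2$ is undefined and the $C^*$-convex combination is not literally proper. This requires either the $\eps$-perturbation trick or cutting down to the projection $Q\in\{\pi(\cla)E(\ox)\}'$ onto the closure of $(I-D)^{1/2}$-relevant vectors and doing the argument there, then passing to a limit --- and one must check that the limiting co-isometry $U$ still lies in the commutant $\{\pi(\cla)E(\ox)\}'$ and still satisfies $U^*UD^{1/2}=D^{1/2}$. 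The other delicate point is the rigidity/uniqueness bookkeeping: verifying that the operator intertwining the two minimal bi-dilations genuinely commutes with \emph{both} $\pi(\cla)$ and $E(\ox)$ (not just with the product set), which uses that $E$ is spectral and $\pi$ a $*$-homomorphism together with Remark \ref{commutnt of spectral instruments are intersection of commutants of its marginals}-type reasoning; once that is in place the extraction of $S$ and the co-isometry property of $U$ are routine. Everything else is a direct transcription of \cite{manishjfa} with ``$\pi(\cla)E(\ox)$'' in place of the Naimark spectral measure, so I would keep the write-up brief and refer to that paper for the perturbation details.
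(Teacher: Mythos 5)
Your proposal is correct and follows exactly the route the paper intends: the paper omits the proof, stating only that it is ``a direct adaptation of the method used for the POVM case \cite{manishjfa},'' which is precisely the adaptation you carry out (Radon--Nikodym derivatives in $\{\pi(\cla)E(\ox)\}'$ for the forward direction, normalization of $V^*D\pi(\cdot)E(\cdot)V$ plus uniqueness of the minimal bi-dilation for the converse, with the standard perturbation handling of the non-invertible boundary case). The only cosmetic point is that $W_i=S_iT_i^{-1}$ is onto simply because it is an invertible isometry, so no extra rigidity argument is needed there.
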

 The following is an immediate corollary of Theorem \ref{abstractcharacterizationinstrument}.
	
	\begin{crlre}
		Every spectral instrument is a $\cst$-extreme point in $I_{\clh}(X,\cla).$
	\end{crlre}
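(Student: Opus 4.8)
The plan is to verify the hypotheses of Theorem \ref{abstractcharacterizationinstrument} for a spectral instrument $\pi E : \ox \to CP(\cla,\bh)$. By Theorem \ref{characterization of spectral}, a spectral instrument has the product form $\pi E(A,a) = \phi(a)\mu(A)$ with $\phi$ a unital $*$-homomorphism and $\mu$ a spectral measure, and its minimal bi-dilation quadruple $(\clk,\pi',E',V')$ is particularly simple: since $\phi$ and $\mu$ already take values in $\bh$, one checks that the minimal bi-dilation space $\clk$ can be taken to be $\clh$ itself, with $V' = I_\clh$, $\pi' = \phi$ (or $\pi$) and $E' = E$ (or $\mu$). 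Indeed, the minimality condition $[\pi(\cla)E(\ox)V\clh] = \clk$ is automatic because $I_\clh \in E(\ox)$ forces $V\clh = \clh \subseteq \clk$, so $\clk = \clh$ and $V$ is unitary; absorbing that unitary, we may assume $V = I_\clh$.

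With $V = I_\clh$, the condition in Theorem \ref{abstractcharacterizationinstrument} becomes easy to satisfy: given any positive $D \in \{\pi(\cla)E(\ox)\}'$ with $V^*DV = D$ invertible, simply take $U = I_\clh$ and $S = D^{1/2}$. Then $U$ is trivially a co-isometry lying in $\{\pi(\cla)E(\ox)\}'$, the relation $U^*U\sqd = \sqd$ holds trivially, $S = D^{1/2}$ is invertible precisely because $D$ is invertible (and positive), and $U D^{1/2} V = D^{1/2} = V S$. Hence the abstract criterion is met, and $\pi E$ is a $\cst$-extreme point of $I_\clh(X,\cla)$.

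The only point requiring genuine care is the claim that for a spectral instrument the minimal bi-dilation is trivial, i.e. that one may take $\clk = \clh$ and $V = I_\clh$. This is where I expect the main (though still modest) obstacle to lie: one must confirm that the abstract GNS space $\clk$ built in Theorem \ref{thm : Bi-dilation theorem} collapses onto $\clh$ when $\cli = \pi E$ is already spectral. This follows from the uniqueness clause of Theorem \ref{thm : Bi-dilation theorem}: the quadruple $(\clh, \phi, \mu, I_\clh)$ satisfies $\pi E(A)(a) = I_\clh^* \phi(a) \mu(A) I_\clh$, the commutation $\mu(A)\phi(a) = \phi(a)\mu(A)$, and the minimality $[\phi(\cla)\mu(\ox) I_\clh \clh] = \clh$; by uniqueness up to unitary equivalence it coincides with the minimal bi-dilation, so indeed $V$ is unitary and after the obvious identification $V = I_\clh$. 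Alternatively, one can bypass Theorem \ref{abstractcharacterizationinstrument} entirely and argue directly from Definition \ref{cstarextreme}: if $\pi E = \sum_i T_i^* \cli_i(\cdot) T_i$ is a proper $\cst$-convex combination, then each $\cli_i \le (T_i^{-1})^* \pi E (\cdot) T_i^{-1}$ after a change of variables, and one uses the Radon--Nikodym Theorem \ref{radonnikodymthm} together with the fact that $\{\pi(\cla)E(\ox)\}' $ contains the Radon--Nikodym derivatives to force $\cli_i$ to be unitarily equivalent to $\pi E$; but the route through Theorem \ref{abstractcharacterizationinstrument} is the shortest.
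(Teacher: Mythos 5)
Your proof is correct and follows essentially the same route as the paper: identify the minimal bi-dilation of a spectral instrument as the trivial quadruple with $V=I_\clh$, then satisfy the criterion of Theorem \ref{abstractcharacterizationinstrument} by taking $U=I_\clh$ and $S=D^{1/2}$. The extra care you take in justifying the triviality of the minimal bi-dilation via the uniqueness clause is a welcome (if brief) elaboration of a step the paper states without comment.
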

	\begin{proof}
		If $\cli$ is a spectral instrument then the minimal bi-dilation for $\cli$ can be taken to be $(\pi, E, I_\clh, \clh )$. For positive $D\in \{\pi(\cla)E(\ox)\}'$ with $D (=I_\clh^*DI_\clh)$ invertible, we can take $U=I_\clh$ and $S=D^{1/2}$ to satisfy the
		criterion.
	\end{proof}

	We now present an alternative abstract characterization of 
	$\cst$-extreme points, adapted from a corollary earlier formulated for CP maps by Bhat and Kumar in \cite{manishjfa}. This result plays a crucial role in the analysis of $\cst$-extremity in the setting of instruments.
	
	\begin{crlre}\label{factorizationcorollary}
		Let $\cli:\ox \to CP(\cla,\bh) $ be an instrument with  the minimal bi-dilation $(\clk,\pi,E,V)$. Then $\cli$ is a $C^*$-extreme point in $I_\clh(X,\cla)$ if and only if for any positive operator $D\in \{\pi(\cla)E(\ox)\}'$ with $V^*DV$ being invertible, there exists $S\in \{\pi(\cla)E(\ox)\}'$ such that $D=S^*S$, $ SVV^*=VV^*SVV^*$  and $V^*SV$ is invertible (i.e. $S(V\clh)\subseteq V\clh$ and $S_{|_{V\clh}}$ is invertible).  
	\end{crlre}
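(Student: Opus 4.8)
The plan is to deduce this from Theorem~\ref{abstractcharacterizationinstrument} by essentially ``absorbing'' the co-isometry $U$ into the square root of $D$, exploiting the support conditions that are built into that theorem. For the forward direction, suppose $\cli$ is $\cst$-extreme and let $D\in\{\pi(\cla)E(\ox)\}'$ be positive with $V^*DV$ invertible. Theorem~\ref{abstractcharacterizationinstrument} gives a co-isometry $U\in\{\pi(\cla)E(\ox)\}'$ with $U^*U\sqd=\sqd$ and an invertible $S_0\in\bh$ with $U\sqd V=VS_0$. The natural candidate is $S:=U\sqd$, which lies in $\{\pi(\cla)E(\ox)\}'$ since both $U$ and $D$ do. Then $S^*S=\sqd U^*U\sqd=\sqd\cdot\sqd=D$ using exactly the relation $U^*U\sqd=\sqd$, which is the key point. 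Also $SV=U\sqd V=VS_0$, so $S(V\clh)\subseteq V\clh$; multiplying $SV=VS_0$ on the left by $V^*$ gives $V^*SV=S_0$ (here $V^*V=I_\clh$ since $\cli$ is normalized), which is invertible. Finally $SVV^*=VS_0V^*$, and since $S(V\clh)\subseteq V\clh$ one checks $VV^*SVV^*=VV^*\cdot VS_0V^*=VS_0V^*=SVV^*$, giving the relation $SVV^*=VV^*SVV^*$.

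For the converse, assume the factorization condition holds. Given positive $D\in\{\pi(\cla)E(\ox)\}'$ with $V^*DV$ invertible, take $S\in\{\pi(\cla)E(\ox)\}'$ with $D=S^*S$, $SVV^*=VV^*SVV^*$ and $V^*SV$ invertible. The idea is to build the co-isometry $U$ required by Theorem~\ref{abstractcharacterizationinstrument} from the polar-type decomposition of $S$ relative to $\sqd$. Since $D=S^*S$, there is a partial isometry $U$ with $S=U\sqd$, $U^*U=\,$ the support projection of $\sqd$ (so $U^*U\sqd=\sqd$), and $UU^*=\,$ the support projection of $S$. One needs $U$ to be a co-isometry, i.e.\ $UU^*=I_\clh$; this should follow because the condition that $V^*SV$ is invertible forces $S$ (and hence $U$) to have full range on $\clh$ in the relevant sense. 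One also needs $U\in\{\pi(\cla)E(\ox)\}'$: the partial isometry in the polar decomposition of an operator lies in any von Neumann algebra containing that operator, and $\{\pi(\cla)E(\ox)\}'$ is a von Neumann algebra containing $S$, so $U$ belongs to it. Finally set $S_1:=V^*SV=V^*U\sqd V$; from $SVV^*=VV^*SVV^*$ together with $S(V\clh)\subseteq V\clh$ one gets $U\sqd V=SV=VS_1$ with $S_1$ invertible, which is precisely the criterion in Theorem~\ref{abstractcharacterizationinstrument}, so $\cli$ is $\cst$-extreme.

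\textbf{Main obstacle.} The routine measure-theoretic and algebraic bookkeeping is straightforward; the genuinely delicate point is the converse, specifically showing that the partial isometry $U$ from $S=U\sqd$ can be taken to be a \emph{co-isometry} on all of $\clh$ (not merely a partial isometry onto $\overline{\mathrm{ran}\,S}$) while still lying in $\{\pi(\cla)E(\ox)\}'$. The resolution should parallel the POVM argument in \cite{manishjfa}: one uses that $V^*SV$ invertible implies $\mathrm{ran}\,S$ is large enough, and if necessary one enlarges $U$ on the orthogonal complement of its initial/final spaces using the fact that these complements, being spectral subspaces of elements of the von Neumann algebra $\{\pi(\cla)E(\ox)\}'$, carry a unitary in that algebra — so one can adjust $U$ to a co-isometry without leaving the commutant. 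Since the excerpt explicitly states that the proof is ``a direct adaptation of the method used for the POVM case \cite{manishjfa}'', I would present this corollary by citing that adaptation and filling in only the two relations $S^*S=D$ and $U\sqd V=VS_1$ that connect the two formulations.
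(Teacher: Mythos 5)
Your forward direction is correct and is exactly the intended derivation from Theorem~\ref{abstractcharacterizationinstrument}: setting $S=U\sqd$ and using $U^*U\sqd=\sqd$ to get $S^*S=D$, together with $SV=VS_0$ and $V^*V=I_\clh$, gives all three required properties.

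The converse, however, has a genuine gap at precisely the point you flag, and the patch you sketch does not work. From the polar decomposition $S=U_0\sqd$ one gets a partial isometry $U_0\in\{\pi(\cla)E(\ox)\}'$ with $U_0^*U_0=\operatorname{supp}(\sqd)=:P$ and $U_0U_0^*=$ the projection onto $\overline{\mathrm{ran}\,S}=:Q$. To enlarge $U_0$ to a co-isometry $U$ still satisfying $U\sqd=S$ you must add a partial isometry $W\in\{\pi(\cla)E(\ox)\}'$ with $W^*W\leq I-P$ and $WW^*=I-Q$, i.e.\ you need $I-Q\precsim I-P$ in the commutant. Knowing $P\sim Q$ gives no control over the complements: take $D=I_\clk$, so $P=I_\clk$ and $I-P=0$, while $S$ is then merely an isometry in the commutant whose range contains $V\clh$ but need not be dense in $\clk$, so $I-Q$ can be nonzero and no enlargement exists. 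The hypotheses ($V^*SV$ invertible, $SVV^*=VV^*SVV^*$) only constrain $S$ on $V\clh$ and say nothing about $\mathrm{ran}\,S$ off that subspace, so the appeal to ``$\mathrm{ran}\,S$ is large enough'' is unfounded. The standard route — and the one the adaptation of \cite{manishjfa} takes — avoids the co-isometry criterion entirely in this direction: given a CP instrument $\clj\leq\cli$ with $\clj(X,1_\cla)$ invertible, the Radon--Nikodym theorem (Theorem~\ref{radonnikodymthm}) gives $\clj(A,a)=V^*D\pi(a)E(A)V$ with $V^*DV=\clj(X,1_\cla)$ invertible; factoring $D=S^*S$ as in the hypothesis and using $SV=V(V^*SV)$ (which follows from $SVV^*=VV^*SVV^*$ by multiplying on the right by $V$) and the fact that $S$ commutes with $\pi(a)E(A)$, one computes
\begin{equation*}
\clj(A,a)=V^*S^*\pi(a)E(A)SV=(V^*SV)^*\,\cli(A,a)\,(V^*SV),
\end{equation*}
with $V^*SV$ invertible. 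This verifies the domination criterion of Corollary~\ref{Zhou type characterization of $\cst$-extreme instruments} (equivalently, one runs the same computation directly on a proper $C^*$-convex combination), and $C^*$-extremity follows. I recommend replacing your converse argument with this two-line computation.
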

	
	Analogous to Theorem \ref{thm: extreme point condition by Bhat}, which characterizes extreme instruments, we now present a result concerning $\cst$-extremity. This criterion, originally due to Farenick and Zhou, was established in the context of completely positive maps in \cite{farenickpams}. 
	\begin{crlre}\label{Zhou type characterization of $\cst$-extreme instruments}
		Let $\cli\in I_\clh(X,\cla)$. Then $\cli$ is $C^*$-extreme in $I_\clh(X,\cla)$ if and only if for any CP
		instrument $\clj:\ox\to CP(\cla,\bh)$ with $\clj\leq \cli$ and $\clj(X,1_\cla)$ invertible, then
		there exists an invertible operator $S\in\clb(\clh)$ such that
		$\clj(A,a)=S^*\cli(A,a)S$ for all $a \in \cla,~A\in\ox$.
	\end{crlre}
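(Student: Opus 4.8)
The plan is to deduce this from Corollary \ref{factorizationcorollary}, which already gives an operator-theoretic characterization of $\cst$-extremity in terms of the minimal bi-dilation $(\clk,\pi,E,V)$. First I would fix the minimal bi-dilation quadruple $(\clk,\pi,E,V)$ of $\cli$. For the forward direction, suppose $\cli$ is $\cst$-extreme and let $\clj\leq\cli$ with $\clj(X,1_\cla)$ invertible. By the Radon--Nikodym type Theorem \ref{radonnikodymthm}, there is a positive contraction $D\in\{\pi(\cla)E(\ox)\}'$ with $\clj(A,a)=V^*D\pi(a)E(A)V$ for all $a,A$; in particular $\clj(X,1_\cla)=V^*DV$, which is invertible by assumption. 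Now invoke Corollary \ref{factorizationcorollary}: there exists $S\in\{\pi(\cla)E(\ox)\}'$ with $D=S^*S$, $SVV^*=VV^*SVV^*$ (equivalently $S(V\clh)\subseteq V\clh$), and $V^*SV$ invertible. Using $S$ commuting with $\pi(a)E(A)$ and $S(V\clh)\subseteq V\clh$, compute
\[
\clj(A,a)=V^*S^*S\pi(a)E(A)V=V^*S^*\pi(a)E(A)SV=(SV)^*\pi(a)E(A)(SV).
\]
Since $S(V\clh)\subseteq V\clh$ we may write $SV=VT$ for a (unique) operator $T\in\bh$, and then $V^*SV=V^*VT=T$ is invertible (using that $V$ is an isometry, as $\cli$ is unital). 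Substituting $SV=VT$ gives $\clj(A,a)=T^*V^*\pi(a)E(A)VT=T^*\cli(A,a)T$, so $S:=T$ (abusing notation) is the desired invertible operator in $\bh$.

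For the converse, suppose that whenever $\clj\leq\cli$ with $\clj(X,1_\cla)$ invertible we can write $\clj(A,a)=S^*\cli(A,a)S$ for some invertible $S\in\bh$. I would verify the criterion of Corollary \ref{factorizationcorollary}. Let $D\in\{\pi(\cla)E(\ox)\}'$ be positive with $V^*DV$ invertible. Define $\clj(A,a):=V^*D\pi(a)E(A)V$; by Theorem \ref{radonnikodymthm} (or directly) this is a CP instrument dominated by $\cli$, and $\clj(X,1_\cla)=V^*DV$ is invertible, so by hypothesis $\clj(A,a)=S^*\cli(A,a)S$ for some invertible $S\in\bh$. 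Now I would push this equality up to the dilation space: the two bi-dilation data $(\clk,\pi,E,D^{1/2}V)$ (suitably cut down to the closure $[\pi(\cla)E(\ox)D^{1/2}V\clh]$) and $(\clk,\pi,E,VS)$ are both dilations of $\clj$, and comparing them via the uniqueness built into the GNS construction of Theorem \ref{thm : Bi-dilation theorem} produces a partial isometry $W\in\{\pi(\cla)E(\ox)\}'$ with $WD^{1/2}V=VS$ and $W^*W$ the projection onto $[\pi(\cla)E(\ox)D^{1/2}V\clh]\supseteq D^{1/2}V\clh$. Setting $\widetilde{S}:=D^{1/2}W^*$ (or a symmetrized variant), one checks $\widetilde{S}\in\{\pi(\cla)E(\ox)\}'$, $\widetilde S^*\widetilde S$ relates to $D$ on the relevant subspace, $\widetilde S(V\clh)\subseteq V\clh$, and $V^*\widetilde S V$ is invertible; this is exactly the condition in Corollary \ref{factorizationcorollary}, so $\cli$ is $\cst$-extreme. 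Alternatively—and perhaps more cleanly—one can run the converse purely at the level of Corollary \ref{Zhou type characterization of $\cst$-extreme instruments}'s analogue for CP maps by noting that the associated CP marginal $\phi_\cli$ inherits the hypothesis and is then $\cst$-extreme among UCP maps, but the direct bi-dilation argument is self-contained.

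The main obstacle I anticipate is the converse direction, specifically the step of \emph{lifting} the ground-level intertwiner $S\in\bh$ with $\clj=S^*\cli S$ to an intertwiner $W$ on the dilation space that lands inside $\{\pi(\cla)E(\ox)\}'$ and matches the $D^{1/2}V$ data on the nose. One must be careful that $VS$ need not have dense-enough range in $\clk$, so $W$ will only be a partial isometry, and one has to track exactly which projection $W^*W$ is and confirm it dominates $D^{1/2}V\clh$ so that the factorization $D=\widetilde S^*\widetilde S$ and the invariance $\widetilde S(V\clh)\subseteq V\clh$ hold in the precise form demanded by Corollary \ref{factorizationcorollary}. The forward direction, by contrast, is essentially bookkeeping with the commutation relations and the isometry property of $V$.
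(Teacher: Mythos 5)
Your overall architecture is correct and is exactly the route the paper intends (the paper omits the proof, citing the Farenick--Zhou argument for UCP maps): the forward direction via the Radon--Nikodym theorem plus Corollary \ref{factorizationcorollary} is complete as written, and the converse via lifting $S$ to the dilation space by uniqueness of minimal bi-dilations is the right idea. Two small imprecisions in the converse are worth fixing. First, to satisfy $W^*WD^{1/2}=D^{1/2}$ you need $\mathrm{range}(D^{1/2})\subseteq[\pi(\cla)E(\ox)D^{1/2}V\clh]$, not merely $D^{1/2}V\clh\subseteq[\pi(\cla)E(\ox)D^{1/2}V\clh]$; this follows because $D^{1/2}$ commutes with $\pi(\cla)E(\ox)$, so $D^{1/2}\clk=D^{1/2}[\pi(\cla)E(\ox)V\clh]\subseteq[\pi(\cla)E(\ox)D^{1/2}V\clh]$. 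Second, your candidate $\widetilde S:=D^{1/2}W^*$ does not work ($\widetilde S^*\widetilde S=WDW^*\neq D$ in general); the correct choice is $\widetilde S:=WD^{1/2}$, which gives $\widetilde S^*\widetilde S=D^{1/2}W^*WD^{1/2}=D$, $\widetilde SV=WD^{1/2}V=VS$ so $\widetilde S(V\clh)\subseteq V\clh$, and $V^*\widetilde SV=S$ invertible — or, more directly, once you have the co-isometry $W$ with $WD^{1/2}V=VS$ and $W^*WD^{1/2}=D^{1/2}$ you can simply invoke Theorem \ref{abstractcharacterizationinstrument} and skip Corollary \ref{factorizationcorollary} entirely. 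You should also normalize a general positive $D$ to a contraction before applying the hypothesis $\clj\leq\cli$, but that is a one-line rescaling.
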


	\subsection{Direct sums of pure instruments}
	
	We now turn to the question of when the direct sum of two $\cst$-extreme instruments remains $\cst$-extreme. This discussion is motivated by the necessary and sufficient condition established in \cite{manishjfa} for the direct sum of unital completely positive maps to be $\cst$-extreme. We present a brief outline of the corresponding results for instruments. As a first step, we define the notion of the direct sum of instruments.
	\begin{dfn}[Direct sums of instruments]
		
		For any countable collection of CP instruments $\{\cli_i:\ox\to CP(\cla,\clb(\clh_i))\}_{i\in\Lambda},$ their direct sum $\oplus_{i\in\Lambda}\cli_i$ is the instrument $\oplus_{i\in\Lambda}\cli_i:\ox \to CP(\cla,\bh),$ defined by $(\oplus_{i\in\Lambda}\cli_i)(A)(a)=\oplus_{i\in\Lambda}\cli_i(A)(a)$, for all~$A\in\clo(X),a\in\cla,$ where $\clh=\oplus_{i\in\Lambda}\clh_i.$  
	\end{dfn}

	Motivated by the concept of mutually disjoint UCP maps, we introduce the notion of disjointness in the context of instruments. We begin by defining when two spectral instruments are said to be mutually disjoint, and subsequently extend this notion to general instruments.

	\begin{dfn}[Mutually disjoint spectral instruments]
		Two spectral instruments $\pi_iE_i:\ox \to CP_{\clh_{i}}(\cla),i=1,2$ are said to be mutually disjoint if there does not exist any non-zero $T\in \clb(\clh_1,\clh_2)$ satisfying $T\pi_1(a)E_1(A)=\pi_2(a)E_2(A)T$ for all $a\in\cla,A\in\clo(X).$  
	\end{dfn}
	Now we are in a position to extend this definition for instruments.

	\begin{dfn}
		Let $\cli_i:\ox\to CP_{\clh_{i}}(\cla),i=1,2$ be two CP instruments with respective minimal bi-dilationn $(\pi_i,E_i,V_i,\clk_i).$ Then $\cli_1$ and  $\cli_2$ are said to be disjoint if $\pi_1E_1$ is disjoint to $\pi_2E_2.$\end{dfn}
	
	The following proposition is easy to prove using the characterizations of $C^*$-extrmity proved above. 
	\begin{ppsn}\label{mutually disjoint directsum}
		Let $\{ \cli_i:\ox\to CP_{\clh_{i}}(\cla) \}_{i\in\Lambda}$ be a countable collection of mutually disjoint instruments. Then  $\cli=\oplus_{i\in\Lambda}\cli_i$ is $C^*$-extreme iff each $\cli_i$ is $C^*$-extreme.
	\end{ppsn}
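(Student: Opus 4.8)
The plan is to reduce the statement to the known characterization of $C^*$-extreme points via Corollary~\ref{factorizationcorollary}, exploiting that disjointness forces any operator in the relevant commutant to be block-diagonal with respect to the direct sum decomposition. First I would set up notation: let $(\clk_i,\pi_i,E_i,V_i)$ be the minimal bi-dilation of $\cli_i$, so that $(\oplus_i\clk_i, \oplus_i\pi_i, \oplus_i E_i, \oplus_i V_i)$ is a bi-dilation of $\cli=\oplus_i\cli_i$; one checks that minimality is inherited, since $[\oplus_i\pi_i(\cla)\,\oplus_i E_i(\ox)\,\oplus_iV_i(\clh_i)] = \oplus_i[\pi_i(\cla)E_i(\ox)V_i(\clh_i)] = \oplus_i\clk_i$. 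Write $\cln = \{(\oplus_i\pi_i)(\cla)(\oplus_iE_i)(\ox)\}'$ for the commutant inside $\clb(\oplus_i\clk_i)$, and similarly $\cln_i=\{\pi_i(\cla)E_i(\ox)\}'$ inside $\clb(\clk_i)$. The key structural fact is that mutual disjointness of the spectral instruments $\pi_iE_i$ means precisely that there is no nonzero intertwiner between $\pi_iE_i$ and $\pi_jE_j$ for $i\neq j$; hence any $T\in\cln$, written as an operator matrix $(T_{ji})$ with $T_{ji}\in\clb(\clk_i,\clk_j)$, must have $T_{ji}=0$ for $i\neq j$, so $\cln = \prod_i \cln_i$ as a von Neumann algebra. (If $\Lambda$ is infinite one takes the $\ell^\infty$-direct product, bounded in norm; the argument is unchanged.)

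Next I would prove the two directions using Corollary~\ref{factorizationcorollary}. For the ``only if'' direction, suppose $\cli$ is $C^*$-extreme and fix $i_0\in\Lambda$; I want to show $\cli_{i_0}$ is $C^*$-extreme. Take a positive $D_{i_0}\in\cln_{i_0}$ with $V_{i_0}^*D_{i_0}V_{i_0}$ invertible in $\clb(\clh_{i_0})$, and extend it to $D\in\cln$ by setting $D = D_{i_0}\oplus \bigoplus_{i\neq i_0} I_{\clk_i}$; then $D$ is a positive element of $\cln$ and $V^*DV = (V_{i_0}^*D_{i_0}V_{i_0})\oplus\bigoplus_{i\neq i_0}(V_i^*V_i)$. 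Since each $\cli_i$ is unital each $V_i$ is an isometry, so $V_i^*V_i=I_{\clh_i}$, and $V^*DV$ is invertible. By $C^*$-extremity of $\cli$, Corollary~\ref{factorizationcorollary} gives $S\in\cln$ with $D=S^*S$, $SVV^*=VV^*SVV^*$, and $V^*SV$ invertible. Because $\cln$ is block-diagonal, $S=\oplus_i S_i$ with $S_i\in\cln_i$; comparing blocks gives $D_{i_0}=S_{i_0}^*S_{i_0}$, $S_{i_0}V_{i_0}V_{i_0}^* = V_{i_0}V_{i_0}^*S_{i_0}V_{i_0}V_{i_0}^*$, and $V_{i_0}^*S_{i_0}V_{i_0}$ invertible (a diagonal block of an invertible diagonal operator is invertible). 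Thus $\cli_{i_0}$ is $C^*$-extreme by Corollary~\ref{factorizationcorollary}.

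For the ``if'' direction, assume each $\cli_i$ is $C^*$-extreme and let $D\in\cln$ be positive with $V^*DV$ invertible. By block-diagonality $D=\oplus_i D_i$ with $D_i\in\cln_i$ positive, and $V^*DV=\oplus_i V_i^*D_iV_i$ invertible forces each $V_i^*D_iV_i$ invertible in $\clb(\clh_i)$ (with uniformly bounded inverses if $\Lambda$ is infinite, using that the inverse of a block-diagonal operator is block-diagonal with blocks the inverses). Apply Corollary~\ref{factorizationcorollary} to each $\cli_i$ to obtain $S_i\in\cln_i$ with $D_i=S_i^*S_i$, $S_iV_iV_i^*=V_iV_i^*S_iV_iV_i^*$, $V_i^*S_iV_i$ invertible; one should check $\|S_i\|$ stays bounded (this follows from $\|S_i\|^2=\|D_i\|\le\|D\|$) so that $S=\oplus_iS_i$ is a bounded operator, necessarily in $\cln$. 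Then $D=S^*S$, $SVV^*=VV^*SVV^*$ blockwise, and $V^*SV=\oplus_iV_i^*S_iV_i$; invertibility of the latter requires the inverses $\|(V_i^*S_iV_i)^{-1}\|$ to be uniformly bounded, which holds when $\Lambda$ is finite but in the infinite case needs a small extra argument — and indeed this uniform-boundedness bookkeeping in the infinite-$\Lambda$ case is the main technical obstacle I anticipate, to be handled either by a direct estimate (bounding $\|(V_i^*S_iV_i)^{-1}\|$ in terms of $\|(V^*SV)^{-1}\|$, which is automatic since a diagonal block of the bounded inverse $(V^*SV)^{-1}$ is exactly $(V_i^*S_iV_i)^{-1}$) or by noting that the whole proof goes through verbatim once one observes that inversion respects the $\ell^\infty$-product structure. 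Granting this, Corollary~\ref{factorizationcorollary} yields that $\cli$ is $C^*$-extreme, completing the proof.
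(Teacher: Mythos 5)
Your overall strategy --- pass to the direct sum of the minimal bi-dilations, use disjointness to show the commutant $\cln=\{(\oplus_i\pi_i)(\cla)(\oplus_iE_i)(\ox)\}'$ is the $\ell^\infty$-product $\prod_i\cln_i$, and then verify the factorization criterion of Corollary~\ref{factorizationcorollary} blockwise --- is exactly the argument the paper has in mind (the paper omits the proof, remarking only that it is ``easy to prove using the characterizations of $C^*$-extremity proved above''), and both directions are set up correctly, including the minimality of the direct-sum dilation and the extension-by-identity trick in the ``only if'' direction.

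There is, however, one genuine flaw in your treatment of the infinite-index case of the ``if'' direction: you need $\sup_i\|(V_i^*S_iV_i)^{-1}\|<\infty$ to conclude that $V^*SV=\oplus_iV_i^*S_iV_i$ is invertible, and the ``direct estimate'' you propose --- reading off $(V_i^*S_iV_i)^{-1}$ as a diagonal block of $(V^*SV)^{-1}$ --- is circular, since it presupposes the very invertibility you are trying to establish. The gap is easily closed. From $S_iV_iV_i^*=V_iV_i^*S_iV_iV_i^*$ and $V_i^*V_i=I_{\clh_i}$ one gets $S_iV_i=V_i(V_i^*S_iV_i)$, hence
\[
(V_i^*S_iV_i)^*(V_i^*S_iV_i)=(S_iV_i)^*(S_iV_i)=V_i^*S_i^*S_iV_i=V_i^*D_iV_i,
\]
so that $\|(V_i^*S_iV_i)^{-1}\|^2=\|(V_i^*D_iV_i)^{-1}\|\le\|(V^*DV)^{-1}\|$, which is the required uniform bound coming from the hypothesis that $V^*DV$ is invertible (not from the conclusion). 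With this substitution the argument is complete; the remaining bookkeeping (block-diagonality of $\cln$ from the absence of intertwiners, the bound $\|S_i\|^2=\|D_i\|\le\|D\|$, and the blockwise verification of the three conditions of Corollary~\ref{factorizationcorollary}) is correct.
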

	
	The following theorem is a direct adaptation of Theorem 3.7 in \cite{manishjfa} in the context of instruments.
	
	\begin{thm}\label{direct sum of pure instruments}
		Let $\cli$ be a direct sum of pure UCP instruments, so that $\cli$ is unitarily equivalent to $\bigoplus_{\alpha\in\Gamma}\bigoplus_{i\in\Lambda_\alpha}\clj_\alpha^i(\cdot)\otimes I_{\clk_\alpha^i}$, where $\clk_\alpha^i$ is a Hilbert space and $\clj_\alpha^i$ is a pure UCP instrument with minimal bi-dilation tuple $(\pi_\alpha, E_\alpha, V_\alpha^i,\clh_{\pi_\alpha E_\alpha})$ such that $\clj_\alpha^i$ is non-unitarily equivalent to $\clj_\alpha^j$ for each $i\neq j$ in $\Lambda _\alpha$, $\alpha\in\Gamma$, and $\pi_\alpha E_\alpha$ is disjoint to $\pi_\beta E_\beta$ for $\alpha\neq \beta$. Then $\cli$ is $\cst$-extreme  in $I_\clh(X,\cla)$ if and only if the following holds for each $\alpha\in\Gamma$ :		\begin{enumerate}
			\item\label{V_i are nests} $\{\text{range}~(V_\alpha^i)\}_{i\in\Lambda_\alpha}$ is a nest in $\clh_{\pi_\alpha E_\alpha}$, which makes $\Lambda_\alpha$ a totally ordered set, and
			\item\label{completion of V_i is countable} if $\cll_\alpha^i=\oplus_{j\leq i}\clk_\alpha^i$ for $i\in\Lambda_\alpha$, then the completion of the nest $\{\cll_\alpha^i\}_{i\in\Lambda_\alpha}$ in $\oplus_{i\in\Lambda_\alpha}\clk_\alpha^i$ is countable.
		\end{enumerate}
	\end{thm}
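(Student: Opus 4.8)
The plan is to reduce the statement to the known characterization for direct sums of pure UCP \emph{maps} (Theorem 3.7 in \cite{manishjfa}), by exploiting the rigidity of the bi-dilation picture and the disjointness hypotheses to decouple the two summation variables $\alpha$ and $i$. First I would observe that, since $\pi_\alpha E_\alpha$ is disjoint from $\pi_\beta E_\beta$ for $\alpha\neq\beta$, Proposition \ref{mutually disjoint directsum} lets us treat each $\alpha$ independently: $\cli=\oplus_\alpha\big(\oplus_{i\in\Lambda_\alpha}\clj_\alpha^i(\cdot)\otimes I_{\clk_\alpha^i}\big)$ is $\cst$-extreme if and only if each inner block $\cli_\alpha:=\oplus_{i\in\Lambda_\alpha}\clj_\alpha^i(\cdot)\otimes I_{\clk_\alpha^i}$ is $\cst$-extreme. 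So it suffices to fix $\alpha$, drop it from the notation, and prove the two-condition criterion for a single block built from mutually non-equivalent pure UCP instruments $\clj^i$ sharing a common irreducible spectral instrument $\pi E$ (irreducible by Proposition \ref{prop: characterization of pure instruments}).

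Next I would compute the minimal bi-dilation of $\cli_\alpha$ explicitly. Each $\clj^i$ has minimal bi-dilation $(\clh_{\pi E},\pi,E,V^i)$ with the \emph{same} $(\clh_{\pi E},\pi,E)$, so $\clj^i(\cdot)\otimes I_{\clk^i}$ has minimal bi-dilation $(\clh_{\pi E}\otimes\clk^i,\pi\otimes I,E\otimes I,V^i\otimes I)$, and the direct sum has bi-dilation quadruple with Hilbert space $\oplus_i(\clh_{\pi E}\otimes\clk^i)$, homomorphism $\oplus_i(\pi\otimes I)$, spectral measure $\oplus_i(E\otimes I)$, and isometry $\oplus_i(V^i\otimes I)$. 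The commutant $\{\pi(\cla)E(\ox)\}'\otimes I$ on $\clh_{\pi E}$ is $\bbc I$ by irreducibility, so on $\oplus_i\clh_{\pi E}\otimes\clk^i$ the relevant commutant of the amplified spectral instrument consists precisely of operator matrices $(T_{ij})$ with $T_{ij}\in I_{\clh_{\pi E}}\otimes\clb(\clk^j,\clk^i)$ whenever $\clj^i\cong\clj^j$ (which by hypothesis forces $i=j$), and $T_{ij}=0$ otherwise; hence the commutant is $\prod_i I_{\clh_{\pi E}}\otimes\clb(\clk^i)$. This is exactly the setting of the UCP-map theorem, because the intertwining structure between the $V^i$ is governed by the partial order of their ranges in $\clh_{\pi E}$.

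Then I would invoke the factorization characterization, Corollary \ref{factorizationcorollary}: $\cli_\alpha$ is $\cst$-extreme iff for every positive $D$ in the above commutant with $V^*DV$ invertible there is $S$ in the commutant with $D=S^*S$, $SVV^*=VV^*SVV^*$, and $V^*SV$ invertible. Since $D=(D_i)_i$ with $D_i$ a positive operator on $\clk^i$ (tensored with $I_{\clh_{\pi E}}$) and $V=\oplus_i(V^i\otimes I)$, unwinding the condition $SVV^*=VV^*SVV^*$ produces precisely the requirement that the projections $V^i{V^i}^*$ be comparable — i.e.\ $\{\mathrm{range}(V^i)\}_{i\in\Lambda_\alpha}$ is a nest — and, once that holds, a diagonalization/transfinite-induction argument over the nest shows the square-root $S$ can be chosen in the commutant exactly when the completion of the associated nest $\{\cll^i_\alpha\}$ in $\oplus_i\clk^i$ is countable. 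This last dichotomy is where I expect the work to concentrate and is handled verbatim as in the proof of Theorem 3.7 of \cite{manishjfa}: the countability obstruction is what prevents a measurable/bounded choice of square root along an uncountable nest.

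\textbf{Main obstacle.} The genuinely delicate point is the transfinite bookkeeping in the last step — showing that when the nest $\{\mathrm{range}(V^i)\}$ is uncountable one can manufacture a positive $D$ in the commutant whose square root cannot simultaneously lie in the commutant and preserve $V\clh$, versus constructing the square root explicitly when the completion is countable. Everything before that (reduction over $\alpha$, identification of the commutant, translation of $SVV^*=VV^*SVV^*$ into the nest condition) is bookkeeping with the bi-dilation, but one must be careful that the passage from instruments to their spectral instruments $\pi E$ — legitimized by Propositions \ref{prop: characterization of pure instruments}, \ref{zero sets of cli and piE are same} and Remark \ref{commutnt of spectral instruments are intersection of commutants of its marginals} — faithfully transports the commutant and the order structure, so that Theorem 3.7 of \cite{manishjfa} applies with $\pi_\alpha E_\alpha$ playing the role of the common irreducible representation.
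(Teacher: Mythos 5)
Your overall strategy (reduce over $\alpha$ by disjointness, compute the minimal bi-dilation of each block, and feed the factorization criterion of Corollary \ref{factorizationcorollary} into the argument of Theorem 3.7 of \cite{manishjfa}) is exactly the adaptation the paper has in mind, but there is a genuine error in the step on which everything else hinges: the identification of the commutant. You claim that the off-diagonal entries $T_{ij}$ of an operator in $\{\rho(\cla)F(\ox)\}'$ (where $\rho F=\oplus_i\,\pi E\otimes I_{\clk^i}$ is the dilating spectral instrument of the block) must vanish for $i\neq j$ because $\clj^i\not\cong\clj^j$. This confuses the compressed instruments $\clj^i$ (which depend on the isometries $V^i$) with their common dilation $\pi E$ (which does not). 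All summands of the dilation carry amplifications of the \emph{same} irreducible $\pi E$, so by the Schur-type argument the intertwiner space from $\pi E\otimes I_{\clk^j}$ to $\pi E\otimes I_{\clk^i}$ is all of $\{I_{\clh_{\pi E}}\otimes B: B\in\clb(\clk^j,\clk^i)\}$, and the commutant is (a copy of) $I_{\clh_{\pi E}}\otimes\clb\bigl(\oplus_i\clk^i\bigr)$, not the block-diagonal algebra $\prod_i I_{\clh_{\pi E}}\otimes\clb(\clk^i)$. The non-equivalence hypothesis on the $\clj^i$ is needed elsewhere (to make the decomposition canonical and to run the ``only if'' direction without regrouping summands), not to kill off-diagonal intertwiners.

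This is not a cosmetic slip: with your block-diagonal commutant the factorization condition decouples over $i$ and is \emph{always} satisfied — for $D=\mathrm{diag}(I\otimes D_i)$ take $S=\mathrm{diag}(I\otimes D_i^{1/2})$ and check $(SVV^*)_{ii}=V^iV^{i*}\otimes D_i^{1/2}=(VV^*SVV^*)_{ii}$ — so your argument would show that every such direct sum is $C^*$-extreme, and conditions (1) and (2) could never emerge. The nest condition arises precisely from the off-diagonal entries you discarded: for $S=(I\otimes B_{ij})_{ij}$ one computes $(SVV^*)_{ij}=V^jV^{j*}\otimes B_{ij}$ versus $(VV^*SVV^*)_{ij}=V^iV^{i*}V^jV^{j*}\otimes B_{ij}$, and it is the interaction of the projections $V^iV^{i*}$ across different $i$ that forces $\{\mathrm{range}(V^i)\}$ to be a nest and produces the countability obstruction. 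Once you replace the commutant with the correct one, the rest of your outline (including the transfinite bookkeeping imported from \cite{manishjfa}) goes through as you describe.
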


	\subsection{Characterization and structure of \texorpdfstring{$\cst$}{C*}-extreme instruments in finite dimensions}

	In the context of completely positive (CP) maps and POVMs on finite-dimensional Hilbert spaces, every $\cst$-extreme point also qualifies as an extreme point in the classical convex sense. Following the approach presented in Proposition 2.1 of \cite{Douglas_Plosker_and_Smith}, this correspondence can be established for instruments. However, it remains unclear whether the same holds when $\clh$ is infinite-dimensional.

\begin{thm}\label{cstarextremeimpliesextreme}
		If $\text{dim}~(\clh)<\infty$, then every $C^*$-extreme point in $I_\clh(X,\cla)$  is an extreme point.
	\end{thm}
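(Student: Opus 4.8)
The plan is to argue by contraposition: suppose $\cli$ is not extreme in $I_\clh(X,\cla)$ and produce a proper $\cst$-convex decomposition, thereby showing $\cli$ is not $\cst$-extreme. This mirrors the classical fact that in finite dimensions a genuine convex splitting can be upgraded to a $\cst$-convex splitting with invertible coefficients. The starting point is Theorem \ref{thm:extrme point criterion for instruments}: since $\cli$ is not extreme, with minimal bi-dilation quadruple $(\clk,\pi,E,V)$, the compression map $D\mapsto V^*DV$ from the commutant $\clc := \{\pi(\cla)E(\ox)\}'$ to $\bh$ fails to be injective. Hence there is a nonzero self-adjoint $D_0\in\clc$ with $V^*D_0V=0$.

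First I would normalize: scaling $D_0$, we may assume $-I_\clk \le D_0 \le I_\clk$, and then set $D_\pm = I_\clk \pm D_0 \in \clc$, so $0 \le D_\pm \le 2I_\clk$ and $V^*D_\pm V = V^*V = I_\clh$ (using that $\cli$ is unital, so $V$ is an isometry). Define two instruments $\cli_\pm(A)(a) = V^*D_\pm\pi(a)E(A)V$; by the Radon--Nikodym Theorem \ref{radonnikodymthm} these are genuine CP instruments (as $D_\pm/2$ is a positive contraction in $\clc$, $\tfrac12\cli_\pm \le \cli$), they are unital since $V^*D_\pm V = I_\clh$, and $\cli = \tfrac12\cli_+ + \tfrac12\cli_-$. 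This is a convex decomposition; the task is to promote it to a \emph{proper} $\cst$-convex one and to check unitary inequivalence is forced to fail, i.e. that $\cli$ genuinely is not $\cst$-extreme.

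The key finite-dimensional input is that on a finite-dimensional $\clh$, one may perturb the trivial coefficients $T_\pm = \tfrac{1}{\sqrt2}I_\clh$ to invertible ones. Concretely, following Proposition 2.1 of \cite{Douglas_Plosker_and_Smith}, I would instead argue directly: if $\cli$ were $\cst$-extreme, then in particular applying Corollary \ref{Zhou type characterization of $\cst$-extreme instruments} to the dominated instrument $\clj := \tfrac12\cli_+ \le \cli$ (whose value $\clj(X,1_\cla) = \tfrac12 I_\clh$ is invertible) yields an invertible $S\in\bh$ with $\tfrac12\cli_+(A,a) = S^*\cli(A,a)S$ for all $a,A$. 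Taking $A=X$, $a=1_\cla$ gives $\tfrac12 I_\clh = S^*S$, so $\sqrt2\,S$ is unitary; call it $U$. Then $\cli_+(A,a) = U^*\cli(A,a)U$ for all $a\in\cla$, $A\in\ox$. Feeding this back into the minimal bi-dilation and using its uniqueness up to unitary equivalence, I would show $\cli_+ = \cli$ as instruments; since $\cli_+(A)(a) = V^*D_+\pi(a)E(A)V$ and $\cli(A)(a)=V^*\pi(a)E(A)V$, this forces $V^*D_0\pi(a)E(A)V = 0$ for all $a,A$, and by minimality $[\pi(\cla)E(\ox)V\clh]=\clk$ together with $D_0\in\clc$ self-adjoint this gives $D_0 = 0$, a contradiction.

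The step I expect to be the main obstacle is the passage "$\cli_+ = U^*\cli\,U$ with $U$ unitary $\implies D_0=0$": one must be careful that a unitary intertwining at the level of instruments really does pull back to the commutant and kill $D_0$. The clean way is to observe that $(\clk,\pi,E,UV)$ is then a minimal bi-dilation of $\cli_+$ — minimality because $[\pi(\cla)E(\ox)UV\clh]$ need not obviously be all of $\clk$, so here one instead uses that $\cli_+$ has minimal bi-dilation obtained by compressing $(\clk,\pi,E,V)$ to $[\pi(\cla)E(\ox)D_+^{1/2}V\clh]$, and compares via uniqueness. Rather than track these subspaces, the smoother route is the one above through Corollary \ref{Zhou type characterization of $\cst$-extreme instruments}: the invertible intertwiner $S$ satisfying $\cli_+ = S^*\cli S$ with $S^*S=I_\clh/2$ forces, on squaring out the bi-dilation identity $V^*D_+\pi(a)E(A)V = 2S^*V^*\pi(a)E(A)VS$ and using a polarization/minimality argument inside $\clk$, that $D_+ = 2\,(\text{something})$ collapses to $D_+$ acting as identity on the cyclic subspace, whence $D_0$ restricted there is $0$ and then $D_0=0$ on $\clk$ by minimality. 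Handling this minimality bookkeeping cleanly — ensuring the cyclic subspace generated is all of $\clk$ — is the delicate point; everything else is routine given Theorems \ref{thm:extrme point criterion for instruments}, \ref{radonnikodymthm} and Corollary \ref{Zhou type characterization of $\cst$-extreme instruments}.
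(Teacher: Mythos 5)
Your reduction is fine up to the last step: the construction of $\cli_\pm(A)(a)=V^*(I_\clk\pm D_0)\pi(a)E(A)V$ from a nonzero self-adjoint $D_0$ in the commutant with $V^*D_0V=0$, the verification that $\tfrac12\cli_\pm\le\cli$ and that $\cli_\pm$ are unital, and the application of Corollary \ref{Zhou type characterization of $\cst$-extreme instruments} to get an invertible $S$ with $\tfrac12\cli_+=S^*\cli S$, $S^*S=\tfrac12 I_\clh$, hence a unitary $U$ with $\cli_+=U^*\cli U$ — all of that is correct. The genuine gap is exactly the step you flag as "the main obstacle": from $\cli_+=U^*\cli U$ you cannot conclude $\cli_+=\cli$ (equivalently $D_0=0$) by any bookkeeping with minimal bi-dilations. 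The identity you would be squaring out is $V^*D_+\pi(a)E(A)V=U^*V^*\pi(a)E(A)VU$, and the unitary $U$ sits on the $\clh$ side, not inside $\clk$; indeed $(\clk,\pi,E,VU)$ is itself a perfectly good minimal bi-dilation of $\cli_+$, so uniqueness of the dilation gives you an intertwiner between two dilations of $\cli_+$ and says nothing about $D_0$. At the level of Radon--Nikodym derivatives there is no contradiction between "$\cli_+$ has derivative $I+D_0\ne I$ with respect to $\cli$" and "$\cli_+$ is unitarily equivalent to $\cli$"; ruling out this coexistence is precisely the content of the theorem, so the argument as proposed is circular at this point.

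The missing ingredient is a finite-dimensional rigidity statement, and the paper supplies it via strict convexity of the Hilbert--Schmidt norm: writing $T=\cli(A,a)$, one has $T=\tfrac12 U_+^*TU_+ + \tfrac12 U_-^*TU_-$ with $\|U_\pm^*TU_\pm\|_2=\|T\|_2$, and since the sphere of a Hilbert space contains no nontrivial segments this forces $U_\pm^*TU_\pm=T$ for every $(A,a)$, i.e.\ $\cli_\pm=\cli$ and hence $V^*D_0\pi(a)E(A)V=0$ for all $a,A$, whence $D_0=0$ by minimality. (The paper runs this directly on an arbitrary convex decomposition $\cli=t\cli_1+(1-t)\cli_2$, which is shorter since it never needs Theorem \ref{thm:extrme point criterion for instruments} or the Radon--Nikodym machinery at all.) If you insert the Hilbert--Schmidt argument in place of your "polarization/minimality" step, your contrapositive route closes; without it, the proof is incomplete.
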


    \begin{proof}
        Suppose $\mathcal I=t\,\mathcal I_1+(1-t)\,\mathcal I_2$ with $0<t<1$ and $\mathcal I_i$ UCP instruments.
Any convex combination is a $C^*$-convex combination (using the coefficients $\sqrt t\,I$ and $\sqrt{1-t}\,I$),
so $C^*$-extremality implies
$\mathcal I_i = \operatorname{Ad}_{U_i}\circ \mathcal I$ for some unitaries $U_i\in\mathcal B(\mathcal H)$.

Fix $(A,a)\in \mathcal O(X)\times\mathcal A$ and set $T:=\mathcal I(A,a)$.
Then
\[
T=t\,U_1^*TU_1+(1-t)\,U_2^*TU_2,
\qquad \|T\|_2=\|U_i^*TU_i\|_2\ (i=1,2),
\]
where $\|\cdot\|_2$ is the Hilbert-Schimdt norm on the finite-dimensional Hilbert space $\bh.$ However, the sphere of a Hilbert
space contains no nontrivial line segments, we conclude
$U_1^*TU_1=T=U_2^*TU_2.$ 
Thus $\mathcal I_1=\mathcal I_2=\mathcal I$, proving extremality.
    \end{proof}

  We now recall a few notions from the theory of log-modular (respectively, factorization) algebras and nest algebras.  
The study of the factorization property for subalgebras of $C^*$-algebras is classical.  
A well-known example is Cholesky’s theorem, which establishes the factorization property for the algebra of upper-triangular matrices in $M_n(\mathbb C)$.  
For details and further references, see \cite{bhat_manish_Lattices_of_Logmodular_algebra}.  
These notions will play an important role in our characterization of $C^*$-extreme quantum instruments.  

\begin{dfn}
Let $\mathcal M$ be a subalgebra of a $C^*$-algebra $\mathcal B$.  
We say that $\mathcal M$ is \emph{log-modular} in $\mathcal B$ if
\[
\{a^*a : a \in \mathcal M,\; a^{-1}\in \mathcal M\}
\]
is norm dense in $\mathcal B_{+}^{-1}$, the set of all positive invertible elements of $\mathcal B$.  

In particular, if
\[
\mathcal B_{+}^{-1} = \{a^*a : a \in \mathcal M,\; a^{-1}\in \mathcal M\},
\]
then $\mathcal M$ is said to have the \emph{factorization property} in $\mathcal B.$
The factorization property is also referred to as \emph{strong log-modularity}.
\end{dfn}
\begin{rmrk}
    It is immediate from the definition that if a subalgebra $\mathcal M$ of a $C^*$-algebra $\mathcal B$ has the factorization property in $\mathcal B$, then $\mathcal M$ is automatically log-modular in $\mathcal B$.
\end{rmrk}
We now restate a weaker form of Corollary \ref{factorizationcorollary} using the language of factorization.

\begin{ppsn}\label{factorization proposition}
Let $\mathcal I:\mathcal O(X)\to CP(\mathcal A,\mathcal H)$ be a $C^*$-extreme instrument with minimal bi-dilation $(\mathcal K,\pi,E,V)$.  
Then the subalgebra
\[
\clm := \{\, S \in \{\pi(\mathcal A)E(\mathcal O(X))\}' : SVV^* = VV^*SVV^* \,\}
\]
has the factorization property in $\{\pi(\mathcal A)E(\mathcal O(X))\}'$.
\end{ppsn}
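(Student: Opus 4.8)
The plan is to read Proposition \ref{factorization proposition} as a restatement of Corollary \ref{factorizationcorollary}, so essentially all the work has already been done; what remains is a purely algebraic check that the set $\clm$ is a subalgebra and that the content of Corollary \ref{factorizationcorollary} translates verbatim into the factorization property. I would proceed in three steps.

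First, I would verify that $\clm$ is in fact a subalgebra of $\clc := \{\pi(\cla)E(\ox)\}'$. The condition $SVV^*=VV^*SVV^*$ says exactly that $S$ leaves $V\clh$ invariant (writing $P=VV^*$ for the projection onto $\mathrm{range}(V)$, since $V$ is an isometry when $\cli$ is unital, the condition reads $SP=PSP$, i.e.\ $S(V\clh)\subseteq V\clh$; and this was already noted parenthetically in Corollary \ref{factorizationcorollary}). The set of operators in a $*$-subalgebra that leave a fixed subspace invariant is clearly closed under addition, scalar multiplication, and composition, and contains the identity; hence $\clm$ is a unital subalgebra of $\clc$. (Note $\clm$ need not be $*$-closed, which is why we only get a subalgebra, not a $C^*$-subalgebra — this is the whole point of the factorization phenomenon.)

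Second, I would unwind the definition of the factorization property. We must show
\[
\{S^*S : S\in\clm,\ S^{-1}\in\clm\} = \clc_+^{-1},
\]
the set of positive invertible elements of $\clc$. The inclusion $\subseteq$ is immediate: if $S\in\clm$ is invertible in $\clc$ with $S^{-1}\in\clc$, then $S^*S$ is a positive invertible element of $\clc$. For the reverse inclusion, take any positive invertible $D\in\clc$. Since $V$ is an isometry and $D$ is bounded below, $V^*DV$ is a positive invertible operator on $\clh$; in particular the hypothesis of Corollary \ref{factorizationcorollary} is satisfied. That corollary then produces $S\in\clc$ with $D=S^*S$, $SVV^*=VV^*SVV^*$ (so $S\in\clm$), and $V^*SV$ invertible. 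It remains to see that $S$ itself is invertible in $\clc$ with $S^{-1}\in\clm$: since $D=S^*S$ is invertible, $S$ is bounded below and has dense range, hence invertible as a bounded operator; its inverse lies in $\clc$ because $\clc$ is a von Neumann algebra; and $S^{-1}$ again leaves $V\clh$ invariant because $S$ restricted to $V\clh$ is invertible (as $V^*SV$ is invertible), so $S^{-1}$ maps $V\clh$ onto $V\clh$. Thus $S^{-1}\in\clm$, and $D=S^*S$ lies in the left-hand set.

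Third, I would combine the two inclusions to conclude $\clc_+^{-1}=\{S^*S : S\in\clm,\ S^{-1}\in\clm\}$, which is precisely the factorization property of $\clm$ in $\clc$. The only place requiring genuine care — the ``main obstacle'' such as it is — is the bookkeeping in the second step: making sure that the operator $S$ supplied by Corollary \ref{factorizationcorollary} is not merely invertible as an operator on $\clk$ but that both $S$ and $S^{-1}$ genuinely belong to $\clm$, i.e.\ that invertibility of $S_{|V\clh}$ (equivalently invertibility of $V^*SV$) forces $S^{-1}$ to again leave $V\clh$ invariant. This follows from the elementary fact that an invertible operator which maps a closed subspace injectively onto itself has an inverse which also maps that subspace onto itself; I would spell this out in one line. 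Everything else is routine.
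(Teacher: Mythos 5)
Your overall strategy is exactly the paper's: the proposition is presented there as a direct restatement of Corollary \ref{factorizationcorollary}, and your first step (that $\clm$ is the unital, generally non-selfadjoint, subalgebra of operators in the commutant leaving $V\clh$ invariant) together with the inclusion $\{S^*S: S,S^{-1}\in\clm\}\subseteq$ (positive invertibles) is fine. The gap is in your second step, at the sentence ``since $D=S^*S$ is invertible, $S$ is bounded below and has dense range, hence invertible.'' Invertibility of $S^*S$ does give that $S$ is bounded below (injective with closed range), but it does \emph{not} give dense range: the unilateral shift satisfies $S^*S=I$ and is not invertible, and $\{\pi(\cla)E(\ox)\}'$ is a general von Neumann algebra which may well contain non-unitary isometries. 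So the bare statement of Corollary \ref{factorizationcorollary} does not hand you an $S$ that is invertible on all of $\clk$; it only guarantees that the restriction $S|_{V\clh}$ is invertible, which says nothing about surjectivity of $S$ on $\clk$.

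The fix is to go one level deeper, to Theorem \ref{abstractcharacterizationinstrument}. For a positive \emph{invertible} $D\in\{\pi(\cla)E(\ox)\}'$ that theorem produces a co-isometry $U$ in the commutant with $U^*UD^{1/2}=D^{1/2}$; since $D^{1/2}$ is invertible this forces $U^*U=I$, so $U$ is in fact unitary. Setting $S:=UD^{1/2}$, one has $S\in\{\pi(\cla)E(\ox)\}'$, $S^*S=D$, and $S$ invertible with $S^{-1}=D^{-1/2}U^*$ again in the commutant; the relation $SV=UD^{1/2}V=VS'$ with $S'$ invertible on $\clh$ gives $S(V\clh)=V\clh$, hence also $S^{-1}(V\clh)=V\clh$, so both $S$ and $S^{-1}$ lie in $\clm$ and $D=S^*S$ is a genuine factorization. (In the finite-dimensional setting where the paper actually applies the proposition, your original argument is harmless, since there $S^*S$ invertible does imply $S$ invertible; but the proposition is stated without that hypothesis, so the repair is needed.) Everything else in your write-up, including the one-line argument that $S^{-1}$ preserves $V\clh$ once $S$ is invertible and $S|_{V\clh}$ is invertible, is correct.
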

    
For any collection $\{P_i\}_{i\in\Lambda}$ of projections in $\bh$, $\vee_{ i\in\Lambda}P_i$ denotes the projection onto the
	smallest subspace containing ranges of all $P_i'$s, and $\wedge_{i\in\Lambda}P_i$ denotes the projection onto the intersection of ranges of all $P_{i}'$s.
	\begin{dfn}[Lattice]
		A collection $\scre$ of projections in a von Neumann algebra $\clb$ is called a lattice if
		$P\wedge Q$ and $P\vee Q\in\scre$ whenever $P,Q\in\scre.$
	\end{dfn}
	
	Let $\clm$ be a sub-algebra of a von Neumann algebra $\clb.$ Let $Lat_{\clb}\clm$ denote the lattice of all
	projections in $\clb$ whose ranges are invariant under every element of $\clm$ i.e.
	$Lat_{\clb}\clm =\{P\in\clb;P=P^2=P^*~\text{and}~ AP=PAP~\forall~A\in M\}.$
	If $\clb=\bh,$ we denote $Lat_{\clb}\clm$ simply by $Lat\clm.$ Interesting to note that if $\clm$ is also considered as a
	subalgebra of $\bh$ (where $\clb\subseteq \bh$), then we have
	$Lat_{\clb}\clm =\clb\cap Lat\clm.$ We note that $0,1 \in Lat_{\clb}\clm$ and $Lat_{\clb}\clm$ is closed under the operations $\vee$ and $\wedge$ of arbitrary
		sub-collection, as well as closed under weak operator topology (WOT).

	Dually, let $\scre$ be a collection of projections in $\clb$ (which may not be a lattice), and let $Alg_{\clb}\scre$
	(or $Alg\scre$ when $\clb =\bh$) denote the algebra of all operators in $\clb$ which leave range of every
	projection of $\scre$ invariant i.e.
	$Alg_{\clb}\scre =\{X\in\clb;XP =PXP~\forall~P\in\scre\}.$
	Again we note that
	$Alg_\clb\scre  =\clb\cap Alg\scre.$
	Also it is clear that $Alg_{\clb}\scre$ is a unital subalgebra of $\clb$, which is closed in WOT.
	\begin{dfn}[Nest]
		Let $\scre$ be a lattice of projections in a von Neumann algebra $\clb.$ Then the
		lattice $\scre$ is called
		a nest if $\scre$ is totally ordered by usual operator ordering i.e. for any $P,Q\in\scre,$ either $P\leq Q$
		or $Q\leq P$ holds true. $\scre$ is said to be a complete nest if $0,1 \in\scre$ and $\vee_{i\in\Lambda}P_i$ and $ \wedge_{i\in\Lambda}P_i~\in\scre$ for any arbitrary family $\{P_i\}_{i\in\Lambda}$ in $\scre.$
		
	\end{dfn}
	\begin{dfn}[Atom and Atomic nest]\label{atom}
	   Let $\mathscr E$ be a complete nest of projections in a von Neumann algebra $\mathcal B$.  
A non-zero projection $r\in\mathcal B$ is called an \emph{atom} of $\mathscr E$ if
\[
r = P - \vee_{Q<P} Q,
\]
for some $P \in \mathscr E$.   A complete nest $\mathscr E$ is said to be \emph{atomic} if there exists a countable collection of atoms $\{r_n\}$ of $\mathscr E$ such that
$\sum_n r_n = 1_{\mathcal B}$ in WOT.

	\end{dfn}
	\begin{dfn}[Nest sub-algebra]
		Let $\clm$ be a sub-algebra of a von Neumann algebra $\clb.$ Then $\clm$ is called
		a nest sub-algebra of $\clb$ (or nest algebra when $\clb = \bh$) if $\clm = Alg_{\clb}\scre$ for a nest $\scre$ in $\clb$ and is called $\clb$-reflexive (or reflexive when $\clb =\bh$ ) if $\clm = Alg_{\clb} Lat_{\clb}\clm.$
		
	\end{dfn}

    Next, we recall a special case of Corollary~4.8 from \cite{bhat_manish_Lattices_of_Logmodular_algebra}, presented here as Lemma~\ref{factorization lemma}, which characterizes log-modular algebras within finite-dimensional von Neumann algebras.
This lemma plays a crucial role in establishing a characterization of $C^*$-extreme instruments in finite dimensions.

	\begin{lmma}\label{factorization lemma}
		Let $\clb$ be a finite-dimensional von Neumann algebra. Then $\cln$ is a log-modular sub-algebra of $\clb$  if and only if it's a nest sub-algebra in $\clb.$ Moreover, any such $\cln$ is $\clb$-reflexive.
	\end{lmma}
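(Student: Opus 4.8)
The plan is to prove the two directions separately, and in each case reduce to the already-known theory of log-modular and nest algebras by passing through the factorization property. For the harder direction --- assuming $\cln$ is log-modular in the finite-dimensional von Neumann algebra $\clb$ --- I would first recall that a finite-dimensional von Neumann algebra is a finite direct sum of full matrix algebras, $\clb \cong \bigoplus_{k} M_{n_k}(\bbc)$, so that every positive invertible element is \emph{exactly} (not merely approximately) of the form $a^*a$ with $a, a^{-1} \in \clb$; hence in finite dimensions log-modularity upgrades automatically to the factorization property, since the set $\{a^*a : a\in\cln, a^{-1}\in\cln\}$ is norm-dense in the finite-dimensional (hence closed) cone $\clb_+^{-1}$, and one checks this dense subset is already all of $\clb_+^{-1}$ by a standard continuity-of-factorization argument. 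This is the first key reduction: \emph{in finite dimensions, log-modular $\Leftrightarrow$ strongly log-modular (factorization property)}.

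Next I would invoke the cited result from \cite{bhat_manish_Lattices_of_Logmodular_algebra} (of which Lemma~\ref{factorization lemma} is the special case being proved) in the form: a subalgebra with the factorization property in a finite-dimensional von Neumann algebra is a nest subalgebra. Concretely, set $\scre := Lat_{\clb}\cln$, the lattice of $\cln$-invariant projections in $\clb$; one shows $\scre$ is automatically a \emph{nest} (totally ordered) --- this is where the factorization hypothesis does the real work, as the existence of two incomparable invariant projections $P, Q$ would obstruct factoring a suitably chosen positive invertible element through $\cln$ --- and then that $\cln = Alg_{\clb}\scre$, i.e. $\cln$ is reflexive and equals the nest algebra of $\scre$. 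The reverse containment $\cln \subseteq Alg_{\clb} Lat_{\clb}\cln$ is automatic; the nontrivial inclusion $Alg_{\clb}\scre \subseteq \cln$ is what reflexivity of nest algebras in finite dimensions supplies.

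For the converse direction --- every nest subalgebra $\cln = Alg_{\clb}\scre$ of a finite-dimensional $\clb$ is log-modular --- I would argue directly: since $\clb$ is finite-dimensional the nest $\scre$ is atomic, so refining $\scre$ to a maximal nest puts $\clb$ in block upper-triangular form relative to the atoms, and then Cholesky-type (LDU / QR) factorization inside each block summand $M_{n_k}(\bbc)$ shows every positive invertible element of $\clb$ factors as $a^*a$ with $a$ upper-triangular and invertible with upper-triangular inverse, i.e. $a, a^{-1} \in \cln$. This gives the factorization property outright, hence log-modularity. The ``moreover'' clause (reflexivity) then follows because nest algebras are reflexive: $\cln = Alg_{\clb} Lat_{\clb}\cln$, which in the atomic finite-dimensional setting is an explicit check that the only invariant projections are the partial sums of atoms determined by $\scre$.

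\textbf{Main obstacle.} The crux is not the computations but correctly citing/deploying the structural input from \cite{bhat_manish_Lattices_of_Logmodular_algebra}: namely that factorization forces the invariant-projection lattice to be totally ordered and forces reflexivity. If that corollary is genuinely available as stated (the excerpt says this lemma \emph{is} a special case of Corollary~4.8 there), then the proof is essentially an unwinding of definitions plus the finite-dimensional ``dense $\Rightarrow$ everything'' upgrade; the one genuinely delicate point to get right is the passage from norm-density of $\{a^*a : a\in\cln, a^{-1}\in\cln\}$ to equality with $\clb_+^{-1}$, which uses that factorization maps vary continuously and that $\clb_+^{-1}$ is open in the self-adjoint part of the finite-dimensional $\clb$.
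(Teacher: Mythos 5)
The first thing to note is that the paper does not prove this lemma at all: it is explicitly recalled as a special case of Corollary~4.8 of \cite{bhat_manish_Lattices_of_Logmodular_algebra}, so there is no in-paper argument to compare against. Your sketch is broadly the right shape and matches how that result goes: the converse direction (nest subalgebra $\Rightarrow$ factorization $\Rightarrow$ log-modular, via block upper-triangularization along the atoms and a Cholesky/LDU factorization in each block) is correct and essentially complete, and the finite-dimensional upgrade from log-modularity to the factorization property is also correct --- though the honest justification is a compactness argument (if $a_n^*a_n\to b$ with $b$ positive invertible, then $\|a_n\|^2\to\|b\|$ and $\|a_n^{-1}\|^2\to\|b^{-1}\|$, so both sequences have convergent subsequences in the closed subalgebra $\cln$, and the limits are mutually inverse), not really ``continuity of factorization.''

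The one genuine inaccuracy is your claim that $\scre:=Lat_{\clb}\cln$ ``is automatically a nest'' when $\cln$ has factorization. That is false: take $\clb=M_2(\bbc)\oplus M_2(\bbc)$ and $\cln=\clb$, which trivially has the factorization property; then $Lat_{\clb}\cln$ consists exactly of the central projections $\{0,e_1,e_2,1\}$, which is not totally ordered, yet $\cln$ is a nest subalgebra (for the trivial nest $\{0,1\}$) and is $\clb$-reflexive. The correct statement --- and the one the paper actually uses later, see its Remark following the lemma, which says $Lat_{\clb}\cln$ \emph{contains} a finite complete atomic nest $\overline{\scre}$ --- is that there exists \emph{some} nest $\scre\subseteq Lat_{\clb}\cln$ with $\cln=Alg_{\clb}\scre=Alg_{\clb}Lat_{\clb}\cln$; total orderedness of the full invariant-projection lattice is neither true nor needed. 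Since you defer the heart of this direction to the cited corollary anyway, this does not sink the argument, but as written the identification of the nest is wrong and would need to be repaired if you intended a self-contained proof.
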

	
	\begin{rmrk}\label{factorization lemma remark}
	   It is worth noting that in the preceding lemma, if $\mathcal N=\operatorname{Alg}_{\mathcal B}(\mathscr E)$ for some nest $\mathscr E$ in $\mathcal B$, then $\overline{\mathscr E}\subseteq \operatorname{Lat}_{\mathcal B}(\mathcal N)$, which is a finite, complete, atomic nest.
	\end{rmrk}

	\begin{thm}\label{direct sum decomposition of \cst extreme instruments}
		Let $\cli:\ox \to CP(\cla,\bh)$ be a  $C^*$-extreme UCP instrument,where $\mathcal{H}$ is a finite-dimensional Hilbert space. Then $\cli$ is unitarily equivalent to a direct sum of pure UCP instruments.
	\end{thm}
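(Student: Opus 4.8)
The plan is to leverage the factorization-algebra machinery that has just been developed. Let $\cli$ be a $C^*$-extreme UCP instrument on a finite-dimensional $\clh$ with minimal bi-dilation quadruple $(\clk,\pi,E,V)$; since $\dim\clh<\infty$ the dilation space $\clk$ is also finite dimensional, so $\clb:=\{\pi(\cla)E(\ox)\}'$ is a finite-dimensional von Neumann algebra. By Proposition \ref{factorization proposition}, the subalgebra $\clm=\{S\in\clb : SVV^*=VV^*SVV^*\}$ has the factorization property in $\clb$, hence in particular is log-modular in $\clb$. By Lemma \ref{factorization lemma}, $\clm$ is then a nest subalgebra of $\clb$ and is $\clb$-reflexive: there is a finite complete atomic nest $\scre\subseteq\clb$ with $\clm=Alg_\clb\scre$ and $\scre\subseteq Lat_\clb\clm$ (Remark \ref{factorization lemma remark}).

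The next step is to extract the direct-sum decomposition from the atoms of this nest. Write the atoms of $\scre$ as $r_1,\dots,r_m\in\clb$ with $\sum_k r_k=I_\clk$. First I would observe that $VV^*$, being the projection onto $V\clh$, satisfies $VV^*\in Lat_\clb\clm$ as well — indeed $S\mapsto VV^*SVV^*$ being the compression in the definition of $\clm$ forces $V\clh$ to be invariant under $\clm$ — and since $\scre$ is a nest generating $\clm$ via $Alg$, a reflexivity argument identifies $VV^*$ with a nest element, or at least places each $r_k$ in a controlled position relative to $VV^*$. Compressing the spectral instrument $\pi E$ by each $r_k$ gives spectral instruments on $r_k\clk$, and compressing $V$ accordingly produces instruments $\cli_k(\cdot)=(r_kV)^*\pi(\cdot)E(\cdot)(r_kV)$ on the subspaces $V^*r_k\clk$ — one must check these subspaces are mutually orthogonal and sum to $\clh$, using that the $r_k$ commute with $\clm$ and hence interact well with $VV^*$. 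Each $\cli_k$ is a compression of a spectral instrument whose commutant, by the atomicity of the nest (each $r_k$ minimal), reduces to $\bbc r_k$; by Proposition \ref{prop: characterization of pure instruments} this makes $\cli_k$ pure.

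Assembling, $\cli=\oplus_k\cli_k$ up to unitary equivalence, with each $\cli_k$ a pure UCP instrument, which is the claim. The main obstacle I anticipate is the bookkeeping in the middle step: showing precisely how the nest $\scre$, the projection $VV^*$, and the atoms $r_k$ fit together so that the compressions $r_kV$ genuinely decompose $\clh$ orthogonally and so that the commutant of each compressed spectral instrument collapses to scalars. In the POVM and UCP-map settings this is handled in \cite{manishjfa} by a careful analysis of how the nest sits relative to the range of $V$; the argument here should parallel that, with the bi-dilation $\pi E$ playing the role of the Stinespring (resp. Naimark) dilation, but one must verify that the commutant identity $\{\pi(\cla)E(\ox)\}'=\{\pi(\cla)\}'\cap\{E(\ox)\}'$ (Remark \ref{commutnt of spectral instruments are intersection of commutants of its marginals}) does not obstruct the atomicity argument. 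A secondary point to be careful about is that $V$ need not be surjective onto $\clk$, so the nest on $\clb$ must be pulled back to $\clh$ through $V^*(\cdot)V$ with attention to invertibility of the compressions, exactly the condition appearing in Corollary \ref{factorizationcorollary}.
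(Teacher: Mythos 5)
Your overall strategy coincides with the paper's (factorization property of $\clm$ inside $\clb:=\{\pi(\cla)E(\ox)\}'$, Lemma \ref{factorization lemma} to obtain a nest subalgebra, then compression by the resulting projections), but two of your steps are genuinely wrong rather than mere bookkeeping. First, the claim that $\dim\clh<\infty$ forces the dilation space $\clk$ to be finite-dimensional is false: the minimal bi-dilation space $[\pi(\cla)E(\ox)V\clh]$ can be infinite-dimensional even for $\clh=\bbc^n$ (e.g.\ $\cla=C[0,1]$ with $\phi(f)=\bigl(\int f\,d\lambda\bigr)I_n$ has GNS space $L^2[0,1]\otimes\bbc^n$). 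What is actually needed for Lemma \ref{factorization lemma} is that the \emph{commutant} $\clb$ be finite-dimensional, and this does not come for free: the paper first invokes Theorem \ref{cstarextremeimpliesextreme} ($C^*$-extreme implies extreme in finite dimensions) and then Theorem \ref{thm:extrme point criterion for instruments}, which makes $D\mapsto V^*DV$ injective from $\clb$ into the finite-dimensional $\bh$. Your proposal omits this argument entirely, and without it the nest machinery cannot be applied.

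Second, the atoms $r_k$ of the nest $\scre$ are not minimal projections of $\clb$: an atom is $P-\bigvee_{Q<P}Q$, which can carry arbitrary multiplicity (take $\clb=M_2$ and $\scre=\{0,I\}$, whose unique atom is $I$). Hence your assertion that the commutant of each compressed spectral instrument ``reduces to $\bbc r_k$'' fails, the compressions $r_k(\pi E)r_k$ need not be irreducible, and the pieces $\cli_k$ need not be pure. The step you deferred as bookkeeping is in fact the essential missing idea: every sub-projection of an atom of $\scre$ still lies in $Alg_{\clb}\scre=\clm$, so each atom can be refined into minimal orthogonal projections $P_{ij}$ of $\clb$ that remain in $\clm$; minimality of the $P_{ij}$ gives irreducibility of the spectral summands, and $P_{ij}\in\clm$ (i.e.\ $P_{ij}V\clh\subseteq V\clh$) is exactly what makes $V\clh=\oplus_{ij}P_{ij}V\clh$ an orthogonal decomposition of $\clh$ compatible with the compressions. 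With these two repairs your argument becomes the paper's proof.
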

	\begin{proof}
		Let $(\clk,\pi,E,V)$ be the minimal bi-dilation tuple of the $C^*$-extreme UCP instrument $\cli.$ Since $\mathcal{H}$ is finite-dimensional, by  Theorem \ref{cstarextremeimpliesextreme} it follows that $\cli$ is an extreme UCP instrument. Furthermore, by Theorem~\ref{thm:extrme point criterion for instruments}, the map $D\mapsto V^*DV$ from $\{\pi(\cla)E(\clo(X))\}'$ to $\bh$ is injective.  Consequently,
         $\{\pi(\cla)E(\clo(X))\}'$
        is finite-dimensional. Now, since $\cli$ is $C^*$-extreme, Proposition \ref{factorization proposition} implies that the sub-algebra $$\clm:=\{T:TV(H)\subseteq V(H)~\&~ T\in\{\pi(\cla)E(\clo(X))\}'\},$$ has factorization in $\{\pi(\cla)E(\clo(X))\}'.$ By applying Lemma~\ref{factorization lemma} together with Remark~\ref{factorization lemma remark}, we conclude that $\clm$ is a reflexive sub-algebra of $\{\pi(\cla)E(\clo(X))\}'$ i.e. $$\clm=Alg_{\{\pi(\cla)E(\clo(X))\}'}Lat_{\{\pi(\cla)E(\clo(X))\}'}\clm$$ and that $Lat_{\{\pi(\cla)E(\clo(X))\}'}\clm$ contains a finite, complete, atomic nest. Let $\mathscr{E}$ denote such a finite, complete, atomic nest contained in $Lat_{\{\pi(\cla)E(\clo(X))\}'}\clm,$ and let $\{P_i:i=1,\cdots,n\}$ be the set of atoms of $\mathscr{E}.$ By definition \ref{atom}, each $P_i$ is a non-zero projection of the form $P-{P}_{-}$ for some $P\in\mathscr{E},$ where ${P}_{-}=\vee _{\{Q\in\mathscr{E};Q<P\}}Q.$ 
		 It is immediate to note that each $P_i,$ as well as any sub-projection of it, belongs to $$Alg_{\{\pi(\cla)E(\clo(X))\}'}\scre=Alg_{\{\pi(\cla)E(\clo(X))\}'}Lat_{\{\pi(\cla)E(\clo(X))\}'}\clm=\clm.$$
		Since $\{\pi(\cla)E(\clo(X))\}'$ is finite dimensional, each projection $P_{i},$ can be further decomposed into a finite set of minimal orthogonal sub-projections $P_{ij},$  i.e., $P_i=\sum_{j=1}^{n_i}P_{ij}$ with each $P_{ij}\in\clm.$  This collection $\{P_{ij}\}$ forms a resolution of the identity on $\clk,$ i.e.,  $\clk=\oplus_{i,j}\clk_{ij},$ where $\clk_{ij}= \text{range}(P_{ij})$ are mutually orthogonal sub-spaces.	This induces a direct sum decomposition of the spectral instrument $\pi E:$ $$\pi E=\oplus_{i,j}\pi_{ij}E_{ij},~\text{where}~ \pi_{ij}E_{ij}:=P_{ij}\pi E.$$ 
		The minimality of each $P_{ij}$ ensures that the  instruments $\pi_{ij}E_{ij}$ are irreducible. Since each $P_{ij}\in \clm,$ we also obtain a decomposition of the subspace $V(\mathcal{H}):$ as $V(\mathcal{H})=\oplus_{i,j}V_{ij}(\mathcal{H}),$ leading to a decomposition of $\mathcal{H},$ itself: $\clh=\oplus_{ij}\mathcal{H}_{ij},~\text{with}~V_{ij}:\mathcal{H}_{ij}\to\clk~\text{isometry}.$ Therefore  the instrument $\cli$ decomposes a finite direct sum of irreducible instruments: $\cli=\oplus_{ij} V_{ij}^*\pi_{ij}E_{ij}V_{ij}.$
	\end{proof}
	
	Combining Theorem \ref{direct sum of pure instruments}, Proposition \ref{mutually disjoint directsum}, and the earlier Theorem \ref{direct sum decomposition of \cst extreme instruments}, we obtain the following complete characterization of $\cst$-extreme instruments in finite dimensions.

	\begin{thm}\label{characterization of $\cst$-extreme instruments}
		Let $\cli:\ox \to CP(\cla,\bh) $ be a UCP instrument, where $\clh$ is finite-dimensional Hilbert space. Then $\cli$ is $C^*$-extreme if and only if there exist finitely many mutually disjoint irreducible instruments $\{\pi_iE_i\}^m_{i=1}$ on $X$ and nested sequence of compression $\cli^i_j (1\leq j\leq n_i)$ of each irreducible instruments $\pi_iE_i$ such that $\cli$ is unitarily equivalent to $\oplus_{i=1}^m\oplus_{j=1}^{n_i}\cli^i_j$.
	\end{thm}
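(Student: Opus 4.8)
The plan is to prove Theorem~\ref{characterization of $\cst$-extreme instruments} by assembling three results already available in the excerpt: the direct-sum decomposition of $C^*$-extreme instruments in finite dimensions (Theorem~\ref{direct sum decomposition of \cst extreme instruments}), the criterion for $C^*$-extremality of direct sums of pure instruments (Theorem~\ref{direct sum of pure instruments}), and the disjoint-direct-sum characterization (Proposition~\ref{mutually disjoint directsum}). First I would establish the forward implication. Suppose $\cli$ is $C^*$-extreme with $\clh$ finite-dimensional. By Theorem~\ref{direct sum decomposition of \cst extreme instruments}, $\cli$ is unitarily equivalent to a finite direct sum $\oplus_{i,j} V_{ij}^*\pi_{ij}E_{ij}V_{ij}$ of irreducible (hence pure, by Proposition~\ref{prop: characterization of pure instruments}) instruments. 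The next step is to group the irreducible summands by their disjointness classes: since the relation ``$\pi_{ij}E_{ij}$ is non-disjoint to $\pi_{kl}E_{kl}$'' corresponds (via the intertwiner condition and irreducibility, using Remark~\ref{commutnt of spectral instruments are intersection of commutants of its marginals}) to unitary equivalence of the spectral instruments, one can relabel so that the distinct disjointness classes are indexed by $i=1,\dots,m$ with a fixed representative $\pi_iE_i$ for each class, and within class $i$ the summands are compressions $\cli^i_j$ ($1\le j\le n_i$) of $\pi_iE_i$.

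Having done the regrouping, I would invoke the two structural theorems. Proposition~\ref{mutually disjoint directsum} tells us that since the classes are mutually disjoint, $\cli=\oplus_{i=1}^m \big(\oplus_{j=1}^{n_i}\cli^i_j\big)$ is $C^*$-extreme if and only if each block $\oplus_{j=1}^{n_i}\cli^i_j$ is $C^*$-extreme. For a single block, all summands are compressions of the same irreducible $\pi_iE_i$, so Theorem~\ref{direct sum of pure instruments} applies with $|\Gamma|=1$: the block is $C^*$-extreme iff the ranges $\{\mathrm{range}(V^i_j)\}_{j=1}^{n_i}$ form a nest in $\clh_{\pi_iE_i}$ (the countability condition of Theorem~\ref{direct sum of pure instruments}(2) being automatic in finite dimensions). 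Thus $C^*$-extremality of $\cli$ forces the compressions within each class to be nested, which is precisely the asserted structure. For the converse, one starts from the stated form $\cli\cong\oplus_{i=1}^m\oplus_{j=1}^{n_i}\cli^i_j$ with the $\pi_iE_i$ mutually disjoint and each $\{\cli^i_j\}_j$ nested, and simply runs Theorem~\ref{direct sum of pure instruments} and Proposition~\ref{mutually disjoint directsum} in the reverse direction to conclude $C^*$-extremality.

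The main obstacle I anticipate is the regrouping step: verifying carefully that the irreducible summands produced by Theorem~\ref{direct sum decomposition of \cst extreme instruments} can be organized so that two of them lie in the same disjointness class exactly when they are compressions of a common irreducible spectral instrument, and that ``nested ranges of the $V^i_j$'' is the right way to package the hypothesis. This requires knowing that for irreducible spectral instruments $\pi_iE_i$ and $\pi_kE_k$, a non-zero intertwiner $T$ with $T\pi_i(a)E_i(A)=\pi_k(a)E_k(A)T$ forces $\pi_iE_i$ and $\pi_kE_k$ to be unitarily equivalent (so that the compressions can indeed all be realized inside one model space $\clh_{\pi_iE_i}$), which follows from Schur-type arguments applied to $T^*T$ and $TT^*$ in the respective commutants, both of which equal $\bbc I$ by irreducibility. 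A secondary point worth spelling out is that within a disjointness class, compressions $\cli^i_j=(V^i_j)^*\pi_iE_i V^i_j$ and $\cli^i_k$ are unitarily equivalent precisely when $\mathrm{range}(V^i_j)$ and $\mathrm{range}(V^i_k)$ have equal dimension, so the ``non-unitarily-equivalent'' hypothesis in Theorem~\ref{direct sum of pure instruments} is handled by allowing repeated summands and absorbing multiplicities into the tensor factors $\clk^i_j$ — but since we only claim existence of \emph{some} nested sequence of compressions, this bookkeeping is routine and does not affect the statement.

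Everything else is bookkeeping: passing from the unitary equivalence $\cli\cong\oplus_{i,j}V_{ij}^*\pi_{ij}E_{ij}V_{ij}$ of Theorem~\ref{direct sum decomposition of \cst extreme instruments} to the cleaner indexing in the statement, and checking that the finite-dimensionality of $\clh$ makes all nests finite and complete so the hypotheses of Theorem~\ref{direct sum of pure instruments} reduce to condition~(1) alone. I would present the proof as: (i) apply Theorem~\ref{direct sum decomposition of \cst extreme instruments}; (ii) regroup into disjointness classes using the intertwiner/irreducibility dichotomy; (iii) apply Proposition~\ref{mutually disjoint directsum} to reduce to one class at a time; (iv) apply Theorem~\ref{direct sum of pure instruments} within each class to extract the nest condition; (v) reverse all steps for the converse.
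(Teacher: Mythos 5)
Your proposal is correct and takes essentially the same route as the paper: the paper's entire proof of this theorem is the single sentence ``Combining Theorem~\ref{direct sum of pure instruments}, Proposition~\ref{mutually disjoint directsum}, and the earlier Theorem~\ref{direct sum decomposition of \cst extreme instruments}, we obtain\ldots'', which is precisely your steps (i)--(v). The regrouping into disjointness classes and the multiplicity bookkeeping for the non-unitary-equivalence hypothesis of Theorem~\ref{direct sum of pure instruments} are the only details the paper leaves implicit, and your treatment of them is sound.
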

	\begin{rmrk}
		This theorem recovers the characterizations of $\cst$-extreme unital completely positive (UCP) maps on finite-dimensional spaces established by Farenick and Zhou in \cite{farenickpams}. It also yields an analogous characterization for normalized positive operator-valued measures (POVMs).
	\end{rmrk}

We now point out an immediate consequence of the structural description of $\cst$-extreme instruments obtained in Theorem~\ref{characterization of $\cst$-extreme instruments}. In the finite-dimensional setting, this structural rigidity implies that the two natural dilations—CP sub-minimal and minimal  bi dilation must agree.
	
	\begin{crlre}\label{coro: \cst-extremity implies that the CP sub-minimal dilation matches with the bi-dilation}
		Let $\mathcal{I} : \ox \to \mathrm{CP}(\mathcal{A}, \mathcal{B}(\mathcal{H}))$ be a $\mathrm{C}^*$-extreme UCP instrument. If $\mathcal{H}$ is finite-dimensional, then the CP sub-minimal dilation coincides with the bi-dilation of the instrument.
	\end{crlre}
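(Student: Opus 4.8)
The plan is to combine the structural description of $C^*$-extreme instruments in finite dimensions with the explicit recovery of the CP sub-minimal dilation from the minimal bi-dilation carried out in Section~\ref{Relation between Sub-minimal dilation and minimal Bi-dilation}. First I would apply Theorem~\ref{direct sum decomposition of \cst extreme instruments} to write $\cli$, up to unitary equivalence, as a direct sum $\oplus_i\cli_i$ of pure UCP instruments. By Corollary~\ref{cor: POVM marginal of pure instrument is trivial} the POVM marginal of each $\cli_i$ is trivial, i.e. $\mu_{\cli_i}(A)\in\{0,I\}$ for all $A\in\ox$; since the POVM marginal of a direct sum of instruments is plainly the direct sum of the marginals, $\mu_\cli=\oplus_i\mu_{\cli_i}$ is a direct sum of $\{0,I\}$-valued normalized measures. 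A one-line check — every $\{0,I\}$-valued normalized measure is multiplicative, and a direct sum of spectral measures is spectral — then shows that $\mu_\cli$ is itself a spectral measure.

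Next, let $(\clk,\pi,E,V)$ be the minimal bi-dilation of $\cli$. Using $V^*V=I_\clh$, $V^*E(A)V=\mu_\cli(A)$ and $E(A)^2=E(A)$, exactly the computation from the proof of Theorem~\ref{characterization of spectral} gives $[\,V\mu_\cli(A)-E(A)V\,]^*[\,V\mu_\cli(A)-E(A)V\,]=\mu_\cli(A)^2-\mu_\cli(A)$, which vanishes since $\mu_\cli$ is spectral. Hence $E(A)V=V\mu_\cli(A)$ for every $A\in\ox$, so $E(\ox)V\clh\subseteq V\clh$. Because $\pi$ is unital we have $V\clh=\pi(1_\cla)V\clh\subseteq[\pi(\cla)V\clh]$, and since $[\pi(\cla)V\clh]$ is invariant under $\pi(\cla)$ it follows that $[\pi(\cla)E(\ox)V\clh]\subseteq[\pi(\cla)V\clh]$. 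The left-hand side equals $\clk$ by the minimality of the bi-dilation, so $[\pi(\cla)V\clh]=\clk$.

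Finally I would read off the statement from Section~\ref{Relation between Sub-minimal dilation and minimal Bi-dilation}, where the CP sub-minimal dilation of $\cli$ is identified as the quadruple $(\clk_1,P_1\pi P_1,P_1EP_1,V)$ with $\clk_1=[\pi(\cla)V\clh]$ and $P_1$ the orthogonal projection of $\clk$ onto $\clk_1$. By the previous step $\clk_1=\clk$, hence $P_1=I_\clk$, and the quadruple becomes $(\clk,\pi,E,V)$, namely the minimal bi-dilation; in particular its POVM component $P_1EP_1=E$ is a genuine spectral measure, which is exactly the assertion that the two dilations coincide. As for difficulty, essentially all the content is already packaged in Theorem~\ref{direct sum decomposition of \cst extreme instruments} (and thus, through Theorem~\ref{cstarextremeimpliesextreme}, in the finite-dimensionality hypothesis); the only point needing mild care is the chain of implications ``marginals of the pure pieces trivial'' $\Rightarrow$ ``$\mu_\cli$ spectral'' $\Rightarrow$ ``$E(\ox)$ reduces $V\clh$'' $\Rightarrow$ ``$[\pi(\cla)V\clh]=\clk$'', each step of which is short.
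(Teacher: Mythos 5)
Your proof is correct, and it takes the route the paper intends: the corollary is stated there as an immediate consequence of the structure theorem for $C^*$-extreme instruments with no separate argument, and your write-up supplies exactly the missing details (decompose into pure summands, observe the POVM marginal is spectral, deduce $E(A)V=V\mu_\cli(A)$, conclude $[\pi(\cla)V\clh]=\clk$ so that $P_1=I_\clk$). Two minor remarks: your first two steps reprove what the paper later records as Theorem \ref{thm: POVM marginal of $\cst$-extreme instruments are spectral in f.d.}, whose proof depends only on the structure theorem and not on this corollary, so you could cite it directly without circularity; and your choice to rerun the intertwining computation from Theorem \ref{characterization of spectral} rather than invoke Theorem \ref{thm : equivalence of bi and sub-minimal dilation} is the right one, since the latter assumes the CP marginal is extreme, which can fail for $C^*$-extreme instruments (Example \ref{exmp : $\cst$-extreme instrument wth non extreme CP marginal}).
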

    However, it is interesting to note that the POVM subminimal dilation does not, in general, provide any information about the minimal bi-dilation.  
For instance, consider the instrument 
\[
\mathcal I:\mathcal O(\{1,2\})\times M_4(\mathbb C)\longrightarrow M_2(\mathbb C), 
\qquad 
\mathcal I(1,X)=V^*XV,\;\; \mathcal I(2,X)=0,
\]
where $V\in M_{4,2}(\mathbb C)$ is an isometry.  
This defines a pure instrument with a spectral POVM marginal, and hence $\mathcal I$ is clearly a $C^*$-extreme UCP instrument.  
Nevertheless, the POVM subminimal dilation is 
$(\mathbb C^2,\phi_{\mathcal I},\mu_{\mathcal I},V)$, 
whereas the minimal bi-dilation is 
$(\mathbb C^4,\pi,E,V)$ and the two are not same.

	\section{Instrument and Its Marginals Through Different Notions of Convexity}\label{Instrument and Its marginals through different notions of convexity} 
    In this section, we explore the relationship between an instrument and its marginals through the lens of classical and $\cst$-convexity structures of CP instruments introduced in Section \ref{Extremality and $\cst$-Convexity of Instruments}.
	\subsection{Extreme instruments and their marginals}
	
	In general the extremity of an instrument does not imply the extremity of its marginals. That is, there exist extreme instruments whose marginals—both the POVM and the CP part—are not extreme. This phenomenon was first discussed in \cite{Ariano}, and is illustrated in Example \ref{exmp: extreme instrument with non-extreme marginals}. As previously noted, the instrument $\cli$ in that example is an extreme instrument. However, the choice of the POVM $\mu$ clearly indicates that the POVM marginal is not extreme. Moreover, the non-extremity of the CP marginal follows from the commutativity of $\mu$, together with Theorem \ref{thm:extrme point criterion for instruments} for CP maps.

	Interestingly, we see below that under certain natural additional conditions, the extremality of the POVM marginal can be inferred from the extremality of the instrument itself. The same can't be said about the CP marginal. For instance, if an extreme instrument is commutative, then its POVM marginal must be a spectral measure (see Theorem \ref{thm : POVM marginal of extreme instruments with commutative range is spectral}). However, there exist examples (see Example \ref{exmp : $\cst$-extreme instrument wth non extreme CP marginal}) where the corresponding CP marginal is not extreme.
    
	\begin{thm}\label{thm : POVM marginal of extreme instruments with commutative range is spectral}
		Let $\cli:\ox\to CP(\cla,\bh)$ be an extreme UCP instrument in the set $I_{\clh}(X,\cla)$ with commutative range. Then the POVM marginal, $\mu_{\cli}:\ox\to\bh$ of $\cli$ is a spectral measure.
	\end{thm}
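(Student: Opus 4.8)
The plan is to exploit the extremality criterion of Theorem~\ref{thm:extrme point criterion for instruments} together with the Radon--Nikodym description of dominated instruments (Theorem~\ref{radonnikodymthm}) to show that the POVM marginal $\mu_\cli$ has only trivial Radon--Nikodym derivatives, which for a POVM is equivalent to being projection-valued (spectral). Concretely, let $(\clk,\pi,E,V)$ be the minimal bi-dilation of $\cli$; since $\cli$ is unital, $V$ is an isometry. Suppose $\mu_\cli$ is \emph{not} spectral. Then there is some $A_0\in\ox$ for which $\mu_\cli(A_0)$ is not a projection, i.e. $\mu_\cli(A_0)^2\neq\mu_\cli(A_0)$; equivalently the operator $\mu_\cli(A_0)\bigl(I-\mu_\cli(A_0)\bigr)$ is a nonzero positive contraction. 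I would like to promote this ``room'' in the POVM marginal into a genuine perturbation of the whole instrument, which — by Theorem~\ref{thm:extrme point criterion for instruments} — contradicts extremality.

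The key step is to produce, from the failure of spectrality of $\mu_\cli$, a nonzero positive operator $D\in\{\pi(\cla)E(\ox)\}'$ with $V^*DV=0$; this is exactly the negation of the injectivity of $D\mapsto V^*DV$, hence contradicts extremality of $\cli$. Here is where commutativity of the range of $\cli$ enters. The range of $\cli$ consists of the operators $\{\cli(A)(a):A\in\ox,a\in\cla\}=\{V^*\pi(a)E(A)V:A\in\ox,a\in\cla\}$, and this being commutative is a strong constraint on how $V^*(\cdot)V$ interacts with the algebra $\clr:=\{\pi(\cla)E(\ox)\}''$ (the von Neumann algebra generated on $\clk$). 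I would argue first on the POVM side alone: restricting attention to $a=1_\cla$, the operators $V^*E(A)V=\mu_\cli(A)$ commute among themselves and with all $V^*\pi(a)V=\phi_\cli(a)$. Using the Naimark part $(\clk_2,E,V)$ — equivalently the POVM sub-minimal picture from Section~\ref{Relation between Sub-minimal dilation and minimal Bi-dilation} — a commutative-range POVM whose minimal Naimark dilation is spectral but which is itself non-projection-valued must admit a nontrivial element $D_0$ of $\{E(\ox)\}'$ with $V^*D_0V=0$ (this is the POVM analogue of the extreme-point criterion; compare the spectral-measure argument inside the proof of Theorem~\ref{characterization of spectral}, where $V\mu(A)=E(A)V$ was forced precisely by $\mu(A)^2=\mu(A)$). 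The subtlety is that $D_0$ lives only in $\{E(\ox)\}'$, whereas for the instrument we need a perturbation commuting with $\pi(\cla)E(\ox)$, i.e. an element of $\{\pi(\cla)E(\ox)\}'=\{\pi(\cla)\}'\cap\{E(\ox)\}'$ (Remark~\ref{commutnt of spectral instruments are intersection of commutants of its marginals}).

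I expect this last point to be the main obstacle: upgrading the POVM-level perturbation $D_0$ to one that \emph{also} commutes with $\pi(\cla)$, without killing the property $V^*D_0V=0$. The commutativity of the range of $\cli$ is precisely the hypothesis that should make this possible — it forces $\pi(\cla)$ and $E(\ox)$ to sit in ``compatible'' positions relative to $V\clh$, so that the spectral projections of $\mu_\cli(A_0)$ (pulled up into $\clk$) can be arranged to lie in $\{\pi(\cla)\}'$ as well. A clean way to organize this is: let $P$ be the spectral projection of $\mu_\cli(A_0)$ for, say, the spectrum intersected with $(0,1)$ — this is nonzero by assumption — and show that the corresponding cut of $E$ produces the desired $D\in\{\pi(\cla)E(\ox)\}'$ with $V^*DV=0$ on the range of $P$, using that $\cli(\ox)(\cla)$ commutes with $\mu_\cli(A_0)$ and hence with $P$. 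Once such a $D$ is exhibited, Theorem~\ref{thm:extrme point criterion for instruments} gives the contradiction and we conclude $\mu_\cli(A)^2=\mu_\cli(A)$ for every $A$, i.e. $\mu_\cli$ is spectral. An alternative, possibly slicker route avoiding the explicit $D$: directly build two distinct instruments $\clj_1\le\cli$, $\clj_2\le\cli$ with $\clj_1(X)=\clj_2(X)$ by perturbing within the commutant $\{E(\ox)\}'$ along the projection $P$, and invoke Theorem~\ref{thm: extreme point condition by Bhat}; commutativity of the range is what guarantees $\clj_1,\clj_2$ remain CP and that their $X$-values coincide. Either way, the heart of the matter is the same compatibility argument forced by the commutative range.
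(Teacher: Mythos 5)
There is a genuine gap, and in fact the central step of your main route is impossible. You propose to contradict Theorem~\ref{thm:extrme point criterion for instruments} by exhibiting a nonzero \emph{positive} operator $D\in\{\pi(\cla)E(\ox)\}'$ with $V^*DV=0$. No such $D$ exists for a minimal bi-dilation, regardless of whether $\cli$ is extreme: $D\geq 0$ and $V^*DV=0$ force $D^{1/2}V=0$, hence $DV=0$, and since $D$ commutes with every $\pi(a)E(A)$ it annihilates the total set $\pi(\cla)E(\ox)V\clh$, so $D=0$. A failure of injectivity of $D\mapsto V^*DV$ can only be witnessed by non-positive elements of the commutant, and you give no construction of one. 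Moreover, the step you yourself flag as ``the main obstacle'' --- upgrading a perturbation from $\{E(\ox)\}'$ to $\{\pi(\cla)E(\ox)\}'=\{\pi(\cla)\}'\cap\{E(\ox)\}'$ --- is precisely where the content of the theorem lies, and it is left as a hope rather than an argument. Your POVM-level input is also not available as stated: extremality of $\cli$ does not give extremality of $\mu_\cli$ as a POVM (compare Example~\ref{exmp: extreme instrument with non-extreme marginals}), so you cannot simply quote a POVM extremality criterion to produce $D_0$.

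Your ``alternative, slicker route'' (two dominated objects with equal total mass plus Theorem~\ref{thm: extreme point condition by Bhat}) is indeed the strategy the paper uses, but without the explicit perturbation it is not a proof, and the perturbation is the whole point. The paper's construction needs no spectral projections and no commutant gymnastics: fix $A\in\ox$ and set $\nu_1(B)=\mu_\cli(A\cap B)\,\mu_\cli(A^\complement)$ and $\nu_2(B)=\mu_\cli(A^\complement\cap B)\,\mu_\cli(A)$. Commutativity of the range makes each $\nu_i$ positive-operator valued and dominated by $\mu_\cli$ (for instance $\mu_\cli(B)-\nu_1(B)=\mu_\cli(A^\complement\cap B)+\mu_\cli(A\cap B)\mu_\cli(A)\geq 0$), while $\nu_1(X)=\mu_\cli(A)\mu_\cli(A^\complement)=\nu_2(X)$. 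The extreme point condition then forces $\nu_1=\nu_2$, and evaluating at $B=A$ gives $\mu_\cli(A)\mu_\cli(A^\complement)=0$, so $\mu_\cli(A)$ is a projection. Your diagnosis of \emph{where} commutativity must enter is reasonable, but neither of your routes supplies the mechanism, and the first one starts from a premise that minimality rules out.
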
 
	\begin{proof}
		Fix $A\in\ox$ such that $\mu(A)\neq0.$
		Define two POVMs $$\nu_1(B)=\mu(A\cap B)\mu(A^\complement) ~\text{and}~\nu_2(B)=\mu(A^\complement \cap B)\mu(A),~\forall~B\in\ox.$$ It is easy to verify that $\nu_i\leq\mu$ for $i=1,2$ and further, $\nu_1(X)=\mu(A)\mu(A^\complement)=\nu_2(X).$ Since $\mu$ is an extreme point, it follows from the previous theorem that $\nu_1=\nu_2.$ In particular, evaluating at $B=A$ we obtain,  $\nu_1(A)=\nu_2(A)=\mu(A)\mu(A^\complement)=0.$
		Now using the relation $\mu(A)+\mu(A^\complement)=1,$ we conclude that $\mu(A)$ is a projection. Since $A$ was arbitrary, it follows that all $\mu(A)$'s are projections. Hence, $\mu$ is a spectral measure.
	\end{proof}
In the converse direction, it can be seen that   the extremity of both marginals implies the extremity of the instrument. This result is due to E. Haapasalo, T. Heinosaari, and J.-P. Pellonpää (see Theorem 4.1 in \cite{pellonpaapieces}). Although the original result was established for CP maps on  tensor products of von Neumann algebras, the underlying technique adapts naturally to our framework. For the reader’s convenience,  we restate the result here in our setting.

	\begin{thm}
		Let $\cli:\ox\to CP(\cla,\bh)$ be a UCP instrument. If both the marginals $\phi_\cli$ and $\mu_\cli$ are extreme, then the instrument $\cli$ is extreme.
	\end{thm}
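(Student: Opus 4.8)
The plan is to verify the extreme‑point criterion of Theorem~\ref{thm:extrme point criterion for instruments}. Fix the minimal bi‑dilation $(\clk,\pi,E,V)$ of $\cli$; by that theorem it suffices to show that $D\mapsto V^*DV$ is injective on $\clm:=\{\pi(\cla)E(\ox)\}'=\{\pi(\cla)\}'\cap\{E(\ox)\}'$ (Remark~\ref{commutnt of spectral instruments are intersection of commutants of its marginals}), so I will take $D\in\clm$ with $V^*DV=0$ and argue that $D=0$. Throughout I will use the sub‑minimal data of Subsection~\ref{Relation between Sub-minimal dilation and minimal Bi-dilation}: let $P_1,P_2$ be the projections of $\clk$ onto $\clk_1=[\pi(\cla)V\clh]$ and $\clk_2=[E(\ox)V\clh]$; recall that $\clk_1$ reduces $\pi$ (so $P_1\in\{\pi(\cla)\}'$), $\clk_2$ reduces $E$ (so $P_2\in\{E(\ox)\}'$), $V\clh\subseteq\clk_1\cap\clk_2$, and that $(\clk_1,\pi|_{\clk_1},V)$ and $(\clk_2,E|_{\clk_2},V)$ are, respectively, the minimal Stinespring dilation of $\phi_\cli$ and the minimal Naimark dilation of $\mu_\cli$ (equivalently, the minimal bi‑dilations of the instruments associated with $\phi_\cli$ and with $\mu_\cli$, i.e.\ the cases $\ox=\{\emptyset,X\}$ and $\cla=\bbc$).

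First I would use the extremality of $\mu_\cli$. Since $D$ and $P_2$ both lie in $\{E(\ox)\}'$, the compression $P_2DP_2$ restricts to an element of the commutant of $E|_{\clk_2}(\ox)$ inside $\clb(\clk_2)$, and $V^*(P_2DP_2)V=V^*DV=0$; the criterion of Theorem~\ref{thm:extrme point criterion for instruments}, applied to the instrument associated with $\mu_\cli$, then forces $P_2DP_2=0$. Because $V\clh$ and every $E(A)V\clh$ lie in $\clk_2$ and $P_2$ commutes with $E(A)$, this yields
\[
V^*DE(A)V=V^*(P_2DP_2)\,E(A)V=0\qquad\text{for all }A\in\ox .
\]

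Next I would feed this, one set at a time, into the extremality of $\phi_\cli$. For each fixed $A\in\ox$ the operator $DE(A)$ still lies in $\{\pi(\cla)\}'$ (a product of commuting members of $\{\pi(\cla)\}'$), and by the previous step $V^*(DE(A))V=0$; hence the criterion of Theorem~\ref{thm:extrme point criterion for instruments}, applied now to the instrument associated with $\phi_\cli$, gives $P_1\,DE(A)\,P_1=0$ for \emph{every} $A\in\ox$. I would then unwind this with the minimality of the bi‑dilation: for $a,b\in\cla$, $h,k\in\clh$ and $A\in\ox$, since $\pi(a)Vh,\pi(b)Vk\in\clk_1$ and $D$, $E(A)$, $\pi(\cdot)$ mutually commute,
\[
0=\langle\pi(a)Vh,\ DE(A)\,\pi(b)Vk\rangle=\langle h,\ V^*D\,\pi(a^*b)E(A)V\,k\rangle,
\]
and letting $a^*b$ run over $\cla$ gives $V^*D\,\pi(c)E(A)V=0$ for all $c\in\cla$, $A\in\ox$. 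A last computation on the total set $\{\pi(b)E(B)Vk\}$ of $\clk$,
\[
\langle\pi(b)E(B)Vk,\ D\,\pi(c)E(C)Vh\rangle=\langle Vk,\ D\,\pi(b^*c)E(B\cap C)Vh\rangle=0,
\]
forces $D=0$, completing the verification.

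The only step that is not pure bookkeeping is the coupling of the two hypotheses: one must resist applying $\phi_\cli$‑extremality to $D$ itself---which yields merely $P_1DP_1=0$, too weak to conclude---and instead apply it to the whole family $\{DE(A):A\in\ox\}$ manufactured by $\mu_\cli$‑extremality. (Symmetrically one could first use $\phi_\cli$‑extremality to get $P_1DP_1=0$, hence $V^*D\pi(a)V=0$, and then apply $\mu_\cli$‑extremality to the family $\{D\pi(a):a\in\cla\}$.) Beyond this I expect no real obstacle: one just has to check carefully that the compressions $P_2DP_2$ and $P_1\,DE(A)\,P_1$ genuinely land in the commutants that appear in the two marginal criteria, and that those marginal criteria are precisely the special cases $\cla=\bbc$ and $\ox=\{\emptyset,X\}$ of Theorem~\ref{thm:extrme point criterion for instruments}.
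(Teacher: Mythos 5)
Your argument is correct. The paper itself does not prove this theorem; it only restates Theorem~4.1 of \cite{pellonpaapieces} and defers to that reference, so there is no internal proof to compare against. What you supply is a self-contained verification of the injectivity criterion of Theorem~\ref{thm:extrme point criterion for instruments}, and every step checks out: $P_2\in\{E(\ox)\}'$ and $P_1\in\{\pi(\cla)\}'$ because $\clk_2$ and $\clk_1$ are reducing; $(\clk_2,E|_{\clk_2},V)$ and $(\clk_1,\pi|_{\clk_1},V)$ are indeed the minimal Naimark and Stinespring dilations of the marginals (this is exactly the content of the paper's subsection on recovering subminimal dilations from the bi-dilation); $V^*DE(A)V=V^*(P_2DP_2)E(A)V=0$ uses only $E(A)V\clh\subseteq\clk_2$; each $DE(A)$ lies in $\pi(\cla)'$ because the commutant is an algebra and $E(A)\in\pi(\cla)'$; and the span of $\{a^*b\}$ is all of $\cla$ since $\cla$ is unital, so the total-set computation does force $D=0$. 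Your closing remark identifies the genuinely non-routine point accurately: applying the $\phi_\cli$-criterion to $D$ alone only kills $P_1DP_1$, which is insufficient, and the whole argument hinges on first manufacturing the family $\{DE(A)\}_{A\in\ox}$ (or, symmetrically, $\{D\pi(a)\}_{a\in\cla}$) from the other marginal. This is in the same dilation-theoretic spirit as the Haapasalo--Heinosaari--Pellonp\"a\"a proof, but has the advantage of being phrased entirely in the paper's $C^*$-algebraic bi-dilation framework rather than the von Neumann tensor-product setting of the original.
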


Moreover, in the same theorem, it was shown that the extremity of a single marginal is sufficient to identify the instrument uniquely. 
Nevertheless, the extremity of the marginals alone does not guarantee that the instrument is decomposable. This is a feature that sharply contrasts with the classical case. 
This distinction is highlighted in the following example, which uses the correspondence between regular POVMs on a compact Hausdorff space $X$ and completely positive maps on $\cx$ (see Chapter~4, \cite{paulsen_book}).
\begin{xmpl}\label{extremity of marginals doesn't gaurantee decomposability}
Let $X = \{1,2,3,4\},~\ox= \mathcal{P}(X)$ and $\clh = \mathbb{C}^2$. Set $\omega = e^{2\pi i/3}$ and  define $\mu :\ox \to \mathcal{B}(\clh)$ by,
\[
\mu(\{1\}) = \frac12 \begin{bmatrix} 1 & 0 \\ 0 & 0 \end{bmatrix}, \quad
\mu(\{2\}) = \frac16 \begin{bmatrix} 1 & \sqrt{2} \\ \sqrt{2} & 2 \end{bmatrix},
\]
\[
\mu(\{3\}) = \frac16 \begin{bmatrix} 1 & \sqrt{2}\omega^2 \\ \sqrt{2}\omega & 2 \end{bmatrix}, \quad
\mu(\{4\}) = \frac16 \begin{bmatrix} 1 & \sqrt{2}\omega \\ \sqrt{2}\omega^2 & 2 \end{bmatrix}.
\]
Then $\mu$ is an extreme POVM. Let $(\clk, E, V)$ be its minimal Naimark dilation.  
Consider the instrument $\cli : \ox \to CP(\cx, \mathcal{B}(\clh))$ given by
\[
\cli(A)(a) = V^* \pi_{\mu}(a) E(A) V,
\]
where $\pi_\mu : \cx \to \mathcal{B}(\clk)$ is the $*$-homomorphism corresponding to $E$ defined by $$E(a)= \sum _{i=1}^4a(i)E(\{i\}), ~~a\in C(X).$$  The extremality of $\mu$ ensures that $\cli$ is an extreme UCP instrument. However, $\cli$ is not decomposable.
\end{xmpl}

	\subsection{\texorpdfstring{$\cst$}{C*}-extreme instruments and their marginals}
	In this section, we investigate the interplay between $\cst$-convexity properties of instruments and their marginals. 
    
    Example \ref{exmp: extreme instrument with non-extreme marginals} revealed that, even in finite dimensions, the classical extremality of an instrument does not, in general, imply the extremality of its marginals.
In contrast to the classical convex setting, the framework of $\cst$-extremity reveals a more intricate relationship. We will see in Theorem \ref{thm: POVM marginal of $C^*$-extreme instruments are spectral in f.d.} below that the $\cst$-extremity of an instrument does ensure that its POVM marginal is spectral -
	hence, $\cst$-extreme at least in the finite-dimensional setting. However, no such conclusion can be drawn for the CP marginal. This asymmetry is illustrated in the following example and in Theorem \ref{thm: POVM marginal of $\cst$-extreme instruments are spectral in f.d.}.

	\begin{xmpl}\label{exmp : $\cst$-extreme instrument wth non extreme CP marginal} Consider the instrument $\cli:\ox\to CP(M_2(\bbc),M_2(\bbc)),$ on the set $X=\{1,2\},$ defined by
		$\cli(i)(A)=a_{ii}E_{ii}, ~i=1,2,$
		where $A=\sum _{i,j=1}^2a_{ij}E_{ij}$, and $E_{ij},i,j=1,2,$ are the standard matrix units in $M_2(\bbc),$.
	 It is easy to verify that $\cli$ is a $\cst$-extreme instrument with commutative range. However, the associated CP marginal $\phi_{\cli}:M_2(\bbc)\to M_2(\bbc)$, mapping a matrix to its diagonal part,
	is not extreme in the convex set of unital completely positive maps, and hence not $\cst$-extreme. \end{xmpl}
	
	We now use the explicit decomposition of $C^*$-extreme instruments obtained in Theorem \ref{characterization of $\cst$-extreme instruments} to show that $\cst$-extremity of an instrument implies the $\cst$-extremity of its POVM marginal in the finite-dimensional setting.

\begin{thm}\label{thm: POVM marginal of $\cst$-extreme instruments are spectral in f.d.}
Let $\cli : \ox \to CP(\cla, \bh)$ be a $\cst$-extreme UCP instrument with $\clh$  finite-dimensional. Then its POVM marginal $\mu_\cli$ is $\cst$-extreme.
\end{thm}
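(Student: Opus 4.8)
The plan is to combine the structural description of $\cst$-extreme instruments in finite dimensions (Theorem~\ref{characterization of $\cst$-extreme instruments}) with the triviality of the spectral measure attached to an irreducible instrument, in order to show that $\mu_\cli$ is in fact a \emph{spectral measure}; the asserted $\cst$-extremality will then follow formally. By Theorem~\ref{characterization of $\cst$-extreme instruments}, $\cli$ is unitarily equivalent to $\bigoplus_{i=1}^m\bigoplus_{j=1}^{n_i}\cli^i_j$, where each $\cli^i_j(\cdot)=(V^i_j)^*\,\pi_iE_i(\cdot)\,V^i_j$ is a compression, by an isometry $V^i_j:\clh^i_j\to\clk_i$, of an irreducible spectral instrument $\pi_iE_i:\ox\to CP(\cla,\clb(\clk_i))$. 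Since the POVM marginal of a direct sum is the direct sum of the POVM marginals, and unitary conjugation carries projections to projections, it suffices to analyse each $\mu_{\cli^i_j}$ separately.

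First I would record that, $\pi_iE_i$ being irreducible, the associated spectral measure $E_i$ is trivial, i.e.\ $E_i(\ox)\subseteq\{0,I_{\clk_i}\}$ (by the proposition characterizing irreducible spectral instruments). Hence, for every $A\in\ox$,
\[
\mu_{\cli^i_j}(A)=\cli^i_j(A)(1_\cla)=(V^i_j)^*E_i(A)V^i_j\in\{0,(V^i_j)^*V^i_j\}=\{0,I_{\clh^i_j}\},
\]
using that $\pi_i$ is unital and $V^i_j$ is an isometry; this is also the content of Remark~\ref{rmk: POVM marginal of compression pure instrument is trivial}. Thus each $\mu_{\cli^i_j}$ is a $\{0,I\}$-valued, hence projection-valued, measure. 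Taking the direct sum over $i,j$ and transporting back along the implementing unitary, $\mu_\cli(A)$ is a projection for every $A\in\ox$, so $\mu_\cli$ is a spectral measure.

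It remains to pass from spectrality to $\cst$-extremality. Regarding $\mu_\cli$ as an instrument over the trivial $\cst$-algebra, I would consider $\cli_{\mu_\cli}\in I_\clh(X,\bbc)$ given by $\cli_{\mu_\cli}(A)(a)=a\,\mu_\cli(A)$. Because $\mu_\cli(A)$ is a projection for each $A$ and $\mu_\cli(X)=I_\clh$, the map $a\mapsto a\,\mu_\cli(A)$ is a unital $*$-homomorphism for every $A$, so $\cli_{\mu_\cli}$ is a spectral instrument; by the corollary following Theorem~\ref{abstractcharacterizationinstrument} (every spectral instrument is $\cst$-extreme), $\cli_{\mu_\cli}$ is $\cst$-extreme in $I_\clh(X,\bbc)$. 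Under the canonical identification of $I_\clh(X,\bbc)$ with the $\cst$-convex set of normalized POVMs — which carries $\cst$-convex combinations to $\cst$-convex combinations and unitary equivalence to unitary equivalence — this says exactly that $\mu_\cli$ is a $\cst$-extreme POVM.

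The bookkeeping (compatibility of marginals with direct sums and with unitary equivalence, and the POVM/instrument dictionary for $\cla=\bbc$) is routine; the only step requiring care is the correct invocation of the structure theorem, which is where finite-dimensionality genuinely enters. I do not expect a deeper obstacle, since the one non-trivial ingredient — that an irreducible instrument has trivial spectral measure — is already available.
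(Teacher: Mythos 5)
Your proof is correct and follows essentially the same route as the paper: invoke Theorem~\ref{characterization of $\cst$-extreme instruments} to write $\cli$ as a direct sum of compressions of irreducible instruments, use the triviality of the POVM marginal of each such compression (Remark~\ref{rmk: POVM marginal of compression pure instrument is trivial}), and conclude that $\mu_\cli$ is spectral, hence $\cst$-extreme. Your only addition is spelling out the final spectral-implies-$\cst$-extreme step via the POVM/instrument dictionary for $\cla=\bbc$, which the paper leaves implicit.
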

	\begin{proof}
		By Theorem \ref{characterization of $\cst$-extreme instruments}, any $\cst$-extreme instrument $\cli$ admits a decomposition of the form: $$\cli=\oplus_{i=1}^m\oplus_{j=1}^{n_i}\cli^i_j,$$ where $\cli^i_j (1\leq j\leq n_i)$ is a nested sequence of compressions of irreducible instruments $\pi_iE_i, 1\leq i \leq m .$ By Remark \ref{rmk: POVM marginal of compression pure instrument is trivial}, the POVM marginal of each $\cli^i_j$ is trivial. Since the POVM marginal of $\cli$ is the direct sum of the marginals of the $\cli^i_j,$  it follows that the POVM  marginal of $\cli$ is a direct sum of trivial measures, hence spectral, and therefore $\cst$-extreme. 	\end{proof}

	In parallel with the case of extremality (see Theorem \ref{thm : POVM marginal of extreme instruments with commutative range is spectral}), we will see in Corollary \ref{cor : POVM marginal of a $C^*$-extreme instrument with a commutative range is spectral} that commutativity of a $\cst$-extreme instrument guarantees that its POVM marginal is spectral. This result, however, appears as a consequence of the following more general theorem, which captures a broader structural phenomenon. The formulation and the underlying idea of this theorem are motivated by the main result (Theorem 3.8) of \cite{manishcmp} and so we omit the proof.

	\begin{thm}\label{m}
		Let $\cli$ be a  $C^*$-extreme point in $I_\clh(X,\cla)$. If $E\in\ox$ satisfies $\cli(A,a)\cli(E,1_\cla)=\cli(E,1_\cla)\cli(A,a)$ for all $A\subseteq E$  in $\ox,$ and $a\in\cla$, then $\cli(E,1_\cla)$ is a projection. In particular, if $\cli(E,1_\cla)$ commutes with all $\cli(B,a)$ for $B\in \ox$ and $a\in\cla,$ then $\cli(E,1_\cla)$ is a projection.
	\end{thm}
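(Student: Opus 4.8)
The statement is really about the single positive contraction $T:=\cli(E,1_\cla)$ — equivalently the POVM–marginal value $\mu_\cli(E)$ — and the goal is to prove $T^2=T$, i.e.\ $\sigma(T)\subseteq\{0,1\}$. The plan is to follow the scheme of Theorem~3.8 in \cite{manishcmp}: trap $\cli$ between suitably reweighted sub-instruments, apply the order characterization of $C^{*}$-extremity, and read off a spectral rigidity for $T$.

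First I would, for each $\epsilon\in(0,1)$, introduce $\clj_\epsilon(A,a):=(1-\epsilon)\cli(A\cap E,a)+\cli(A\cap E^{\complement},a)$. This is a CP instrument, $\cli-\clj_\epsilon=\epsilon\,\cli(\,\cdot\cap E,\cdot)$ is again CP so $\clj_\epsilon\leq\cli$, and $\clj_\epsilon(X,1_\cla)=I-\epsilon T$ is invertible because $0\leq T\leq I$. Corollary~\ref{Zhou type characterization of $\cst$-extreme instruments} then yields an invertible $S_\epsilon\in\bh$ with $\clj_\epsilon(A,a)=S_\epsilon^{*}\cli(A,a)S_\epsilon$ for all $A,a$; evaluating at $(X,1_\cla)$ and $(E,1_\cla)$ gives $S_\epsilon^{*}S_\epsilon=I-\epsilon T$ and $S_\epsilon^{*}TS_\epsilon=(1-\epsilon)T$. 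Writing the polar decomposition $S_\epsilon=U_\epsilon(I-\epsilon T)^{1/2}$ — where $U_\epsilon$ is a genuine unitary since $S_\epsilon$ and $I-\epsilon T$ are invertible — and using that $(I-\epsilon T)^{-1/2}$ commutes with $T$, one gets $U_\epsilon^{*}TU_\epsilon=(1-\epsilon)T(I-\epsilon T)^{-1}=g_\epsilon(T)$ with $g_\epsilon(t)=\frac{(1-\epsilon)t}{1-\epsilon t}$.

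Thus $\sigma(T)=g_\epsilon(\sigma(T))=g_\epsilon^{-1}(\sigma(T))$ for every $\epsilon$, where each $g_\epsilon$ is an increasing self-homeomorphism of $[0,1]$ fixing $0$ and $1$ with $g_\epsilon(t)<t$ on $(0,1)$. Since $\{g_\epsilon(t_0):\epsilon\in(0,1)\}=(0,t_0)$ and $\{g_\epsilon^{-1}(s):\epsilon\in(0,1)\}=(s,1)$, one point of $\sigma(T)$ in $(0,1)$ would already force $\sigma(T)=[0,1]$; hence either $\sigma(T)\subseteq\{0,1\}$ (and we are done) or $\sigma(T)=[0,1]$. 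When $\dim\clh<\infty$ the second alternative is excluded immediately: $\operatorname{tr}T=\operatorname{tr}(U_\epsilon^{*}TU_\epsilon)=\operatorname{tr}(g_\epsilon(T))$ with $T-g_\epsilon(T)\geq 0$ forces $g_\epsilon(T)=T$. Note that so far neither the commutativity assumption nor the restriction ``$A\subseteq E$'' has been used — the commuting hypothesis is needed exactly to rule out $\sigma(T)=[0,1]$ in infinite dimensions.

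That exclusion is the step I expect to be the main obstacle. Under the commuting hypothesis the intertwiner of the previous step upgrades: since functions of $T$ commute with the relevant values of $\cli$ (for $A\cap E$ always, for $A\cap E^{\complement}$ in the ``in particular'' case; the stated weaker hypothesis is handled by also running the reweighting that degenerates $E^{\complement}$ instead of $E$, or by passing to the bi-dilation), one finds $U_\epsilon^{*}\cli(A,a)U_\epsilon=(I-\epsilon T)^{-1}\bigl[(1-\epsilon)\cli(A\cap E,a)+\cli(A\cap E^{\complement},a)\bigr]$. Iterating $\operatorname{Ad}_{U_\epsilon}$ on $\cli(E^{\complement},\cdot)$ gives $U_\epsilon^{*n}\cli(E^{\complement},a)U_\epsilon^{n}=m_n(T)\cli(E^{\complement},a)$ with $0\leq m_n\leq 1$ decreasing and $m_n\to\chi_{\{0\}}$ pointwise on $[0,1]$, while the mirror reweighting concentrates the analogous mass on $\chi_{\{1\}}(T)$; playing these against the norm identity $\|U_\epsilon^{*n}\cli(E^{\complement},a)U_\epsilon^{n}\|=\|\cli(E^{\complement},a)\|$ should force the spectral projection $\chi_{(0,1)}(T)$ to kill the range of $\mu_\cli$ over $E$ and over $E^{\complement}$, hence to vanish — this is the delicate point, and the part most closely modelled on \cite{manishcmp}. (A parallel route works in the minimal bi-dilation $(\clk,\pi,\scre,V)$: $T$ is a projection iff $\scre(E)$ leaves $V\clh$ invariant, which one tries to extract from the factorization property of $\{S\in\{\pi(\cla)\scre(\ox)\}':S(V\clh)\subseteq V\clh\}$ given by Proposition~\ref{factorization proposition} and Lemma~\ref{factorization lemma}, applied to $D=I-\tfrac12\scre(E)$.) Finally, the ``in particular'' assertion is the special case where the hypothesis holds for every $A\in\ox$, so it is immediate.
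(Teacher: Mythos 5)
The paper itself omits the proof of this theorem, deferring to Theorem~3.8 of \cite{manishcmp}, so I am comparing your proposal against that intended argument. Your first half is correct and cleanly executed: $\clj_\epsilon\leq\cli$ with $\clj_\epsilon(X,1_\cla)=I-\epsilon T$ invertible, Corollary~\ref{Zhou type characterization of $\cst$-extreme instruments} plus polar decomposition gives $U_\epsilon^*TU_\epsilon=g_\epsilon(T)$, and the dichotomy ``$\sigma(T)\subseteq\{0,1\}$ or $\sigma(T)=[0,1]$'' follows. In particular your proof of the finite-dimensional case is complete. But the theorem is stated for arbitrary $\clh$, and the step you yourself flag as the obstacle --- excluding $\sigma(T)=[0,1]$ --- is a genuine gap, and the route you sketch for it does not close. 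Two concrete problems: (a) the identity $U_\epsilon^{*n}\cli(E^{\complement},a)U_\epsilon^{n}=m_n(T)\cli(E^{\complement},a)$ needs $T$ to commute with $\cli(E^{\complement},a)$, which the hypothesis does not provide (it only covers $A\subseteq E$); restricting to $A\subseteq E$ one gets $U_\epsilon^{*n}\cli(A,a)U_\epsilon^{n}=f_n(T)\cli(A,a)$ with $f_n\searrow\chi_{\{1\}}$ pointwise, but $f_n(T)\to\chi_{\{1\}}(T)$ only strongly, and norm preservation under unitary conjugation combined with strong convergence does not force $\chi_{(0,1)}(T)$ to vanish. (b) The parenthetical fallback via Proposition~\ref{factorization proposition} and Lemma~\ref{factorization lemma} is unavailable in general, since Lemma~\ref{factorization lemma} is stated only for finite-dimensional von Neumann algebras.

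The missing idea is that the commutation hypothesis should be spent at the \emph{construction} stage, not afterwards: because $T$ commutes with $\cli(A,a)$ for all $A\subseteq E$, one may take a nonconstant positive function $f$ of $T$ (for instance $D=f(T)$ with $f=1-\tfrac12\chi_{[\delta,1-\delta]}$, or a continuous version of it) and form
$\clj(A,a)=D^{1/2}\cli(A\cap E,a)D^{1/2}+\cli(A\cap E^{\complement},a)$,
which is a CP instrument dominated by $\cli$ precisely because $\cli(A\cap E,a)-D^{1/2}\cli(A\cap E,a)D^{1/2}=(I-D)^{1/2}\cli(A\cap E,a)(I-D)^{1/2}\geq 0$ --- this is the only place the hypothesis is needed, and it is unavailable for scalar-coefficient perturbations, which is exactly why your first half never used it. Running your same computation with this $\clj$ yields $U^*TU=F(T)$ with $F=\mathrm{id}$ outside $[\delta,1-\delta]$ and $F(t)=t/(2-t)<t$ on $[\delta,1-\delta]$; then $t_1=\max(\sigma(T)\cap[\delta,1-\delta])$ can have no preimage in $\sigma(T)$ under $F$, a contradiction, so $\sigma(T)\cap[\delta,1-\delta]=\emptyset$ for every $\delta$ and $T$ is a projection. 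This single-shot ``barrier'' argument is the mechanism of Theorem~3.8 in \cite{manishcmp}; your scalar reweighting, by contrast, fixes only the endpoints $0$ and $1$ and therefore can never rule out $\sigma(T)=[0,1]$ on its own.
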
 
	The following two corollaries are immediate:
	\begin{crlre}
		For every atomic $\cst$-extreme instrument, the associated POVM marginal is a spectral measure.
	\end{crlre}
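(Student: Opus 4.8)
The plan is to obtain the corollary from Theorem~\ref{m}, by verifying its commutativity hypothesis at each atom and then patching the resulting projections together. Since $\cli$ is atomic, so is its POVM marginal $\mu_\cli$ (by the remark following Definition~\ref{definition of atomic and non-atomic instruments}); hence, exactly as for atomic POVMs in \cite{plosker_ramsey_povmintegratiion_2, manishcmp}, $\mu_\cli$ is carried by a countable family of pairwise disjoint atoms $\{A_n\}_n$ with $X=\bigsqcup_n A_n$ modulo a $\cli$-null set and $\mu_\cli(\cdot)=\sum_n\mu_\cli(\cdot\cap A_n)$ in WOT. If each $\mu_\cli(A_n)$ turns out to be a projection, then, $\sum_n\mu_\cli(A_n)=\mu_\cli(X)=I_\clh$ being a sum of positive contractions, the projections $\mu_\cli(A_n)$ are mutually orthogonal; and for arbitrary $B\in\ox$ the identity $\mu_\cli(B)=\sum_n\mu_\cli(B\cap A_n)$ with $\mu_\cli(B\cap A_n)\in\{0,\mu_\cli(A_n)\}$ exhibits $\mu_\cli(B)$ as a sub-sum of mutually orthogonal projections, hence a projection. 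So $\mu_\cli$ is spectral, and everything reduces to a single atom.

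Fix an atom $A$ and take $E=A$ in Theorem~\ref{m}. Because $A$ is an atom, for every $A'\subseteq A$ in $\ox$ one has $\cli(A')\in\{0,\cli(A)\}$ as completely positive maps, so $\cli(A',a)\in\{0,\cli(A,a)\}$ for each $a\in\cla$; hence the hypothesis of Theorem~\ref{m} collapses to the single requirement
\[
\cli(A,a)\,\mu_\cli(A)=\mu_\cli(A)\,\cli(A,a)\qquad(a\in\cla),
\]
with $\mu_\cli(A)=\cli(A,1_\cla)$. Granting this, Theorem~\ref{m} makes $\mu_\cli(A)$ a projection, and we are done.

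It remains to establish this commutation, which is the instrument analogue of the property of $C^*$-extreme POVMs underpinning Theorem~3.8 of \cite{manishcmp}, and I expect it to be the crux. I would pass to the minimal bi-dilation $(\clk,\pi,E,V)$: there $\cli(A,a)=V^*\pi(a)E(A)V$ and $\mu_\cli(A)=V^*E(A)V$, and $E(A)\in\{\pi(\cla)E(\ox)\}'$ because the spectral projection $E(A)$ commutes with $\pi(\cla)$ and with $E(\ox)$. As $A$ is an atom for $\cli$, it is an atom for $\pi E$ (Proposition~\ref{cli is atomic iff piE is atomic}), so $E(A)$ is an indecomposable spectral projection. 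Using the factorization structure of $\clm=\{S\in\{\pi(\cla)E(\ox)\}':S(V\clh)\subseteq V\clh\}$ from Proposition~\ref{factorization proposition} --- or, in finite dimensions, its identification via Lemma~\ref{factorization lemma} as a reflexive nest subalgebra carrying a complete atomic nest $\scre\subseteq\mathrm{Lat}\,\clm$ --- one argues that such an indecomposable spectral projection must reduce $V\clh$, i.e. $E(A)(V\clh)\subseteq V\clh$. Then $V\mu_\cli(A)=E(A)V$, so $\cli(A,a)=V^*\pi(a)E(A)V=\phi_\cli(a)\,\mu_\cli(A)$ (and in particular $\mu_\cli(A)^2=\mu_\cli(A)$ outright), yielding the commutation. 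The genuine obstacle is this reduction step: since $\mu_\cli(A)=V^*E(A)V$ need not be invertible, one cannot feed $D=E(A)$ into Corollary~\ref{factorizationcorollary} directly, and one must instead exploit the full nest/lattice structure of $\clm$ --- locating $E(A)$ inside the diagonal $\clm\cap\clm^{*}$ --- in the spirit of the computations in \cite{manishcmp}.
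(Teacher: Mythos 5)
Your overall strategy --- reduce to a single atom via the atomic decomposition and the orthogonality of projections summing to $I_\clh$, then feed the atom into Theorem~\ref{m} --- is exactly the route the paper intends (the paper simply declares this corollary ``immediate'' from Theorem~\ref{m}), and your reduction steps are fine. The problem is the step you yourself flag as the crux, and it is a genuine gap: for an atom $A$ the hypothesis of Theorem~\ref{m} does \emph{not} trivialize. In the POVM setting it does, because for $A'\subseteq A$ one has $\mu(A')\in\{0,\mu(A)\}$ and anything commutes with itself; for an instrument the hypothesis involves $\cli(A',a)$ for every $a\in\cla$, and at an atom it collapses only to the still-nontrivial requirement that $\cli(A,a)$ commute with $\mu_\cli(A)$ for all $a$. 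This is not a consequence of atomicity alone: in Example~\ref{exmp: extreme instrument with non-extreme marginals} the singleton $\{1\}$ is an atom and $\cli(1,a)=\mu(1)^{1/2}a\,\mu(1)^{1/2}$ does not commute with $\mu_\cli(1)=\mu(1)$. So the commutation must be extracted from $C^*$-extremity, and your proposal does not actually do this.

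The sketch you offer does not close the gap. The assertion that the ``indecomposable'' spectral projection $E(A)$ must leave $V\clh$ invariant is precisely what needs proof: $A$ being an atom only says that $E$ restricted to measurable subsets of $A$ takes the two values $0$ and $E(A)$; it gives no reason for $E(A)$ to lie in the algebra $\clm$ of Proposition~\ref{factorization proposition}, let alone in its diagonal. Moreover, the nest-algebra identification you lean on (Lemma~\ref{factorization lemma} and Remark~\ref{factorization lemma remark}) is available only when $\{\pi(\cla)E(\ox)\}'$ is finite-dimensional, and in that regime the conclusion already follows, with no atomicity hypothesis, from Theorem~\ref{thm: POVM marginal of $\cst$-extreme instruments are spectral in f.d.}; the substance of the corollary is the general case, which your argument does not reach. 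To be fair, you have correctly located a step that the paper itself passes over in silence, but as a proof the proposal is incomplete at exactly this point.
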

	\begin{crlre}\label{cor : POVM marginal of a $\cst$-extreme instrument with a commutative range is spectral}
	The associated POVM of every commutative $\cst$-extreme instrument is a spectral measure.
	\end{crlre}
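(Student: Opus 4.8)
The plan is to read the statement off from Theorem~\ref{m} with almost no extra work. Let $\cli$ be a commutative $\cst$-extreme instrument, i.e.\ a $\cst$-extreme point of $I_\clh(X,\cla)$ whose range $\{\cli(A,a):A\in\ox,\ a\in\cla\}$ is a commuting family of operators on $\clh$. Fix an arbitrary $E\in\ox$. Since $\cli(E,1_\cla)=\mu_\cli(E)$ belongs to the range of $\cli$, it commutes with $\cli(B,a)$ for every $B\in\ox$ and every $a\in\cla$; this is precisely the hypothesis of the ``in particular'' clause of Theorem~\ref{m}, which therefore tells us that $\mu_\cli(E)$ is a projection. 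As $E$ was arbitrary, every value of the POVM $\mu_\cli$ is a projection.

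The remaining step is to upgrade ``all values are projections'' to ``spectral''. Here I would invoke the standard measure-theoretic fact for POVMs: $\mu_\cli(X)=\cli(X,1_\cla)=I_\clh$ because $\cli$ is unital; for disjoint $A,B\in\ox$, $\mu_\cli(A)+\mu_\cli(B)=\mu_\cli(A\cup B)$ is a projection, which forces $\mu_\cli(A)\mu_\cli(B)=0$; and then decomposing arbitrary $A,B\in\ox$ through the disjoint pieces $A\cap B$, $A\setminus B$, $B\setminus A$ and using the additivity of $\mu_\cli$ built into Definition~\ref{maindefinition} gives $\mu_\cli(A\cap B)=\mu_\cli(A)\mu_\cli(B)$ for all $A,B$. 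Thus $\mu_\cli$ is multiplicative, hence a spectral measure, and being projection-valued it is in particular $\cst$-extreme (cf.\ Theorem~\ref{thm: POVM marginal of $\cst$-extreme instruments are spectral in f.d.} for the finite-dimensional parallel).

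I do not anticipate a genuine obstacle: all the substance has been absorbed into Theorem~\ref{m} (itself modeled on Theorem~3.8 of \cite{manishcmp}), and the commutativity hypothesis is exactly what is needed to feed into its ``in particular'' clause. The only point requiring a line of care is the reduction ``projection-valued POVM $\Rightarrow$ spectral measure'', but this is a routine argument that relies only on the countable additivity already recorded for CP instruments.
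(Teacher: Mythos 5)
Your proposal is correct and follows exactly the route the paper intends: the corollary is stated as an immediate consequence of Theorem~\ref{m}, and feeding the commutativity of the range into its ``in particular'' clause gives that every $\mu_\cli(E)$ is a projection, after which the standard upgrade from projection-valued POVM to spectral measure finishes the argument. The extra care you take with that last routine step is fine but not a departure from the paper's approach.
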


	It was established in Theorem 4.1, \cite{pellonpaapieces} 
    that if both the marginals of an instrument are extreme, then the instrument itself must also be extreme. It is natural to ask whether an analogous result holds in the setting of $\cst$-convexity. In this section, we provide an affirmative answer to this question in the finite-dimensional case.
	
	Before presenting the main result, we establish a few preparatory results that will play a crucial role in the proof. To this end, let us recall the relationship between sub-minimal dilations and minimal bi-dilations as discussed in Section~\ref{Relation between Sub-minimal dilation and minimal Bi-dilation}. Let $\cli: \ox \to CP(\cla, \bh)$ be a UCP instrument, and suppose that $(\clk, \pi, E, V)$ is its minimal bi-dilation. Let $P_1$  be the orthogonal projection onto the subspace $\clk_1 := [\pi(\cla)V(\clh)] \subseteq \clk$. Then the minimal Stinespring dilation of the associated CP marginal $\phi_\cli$ is given by the triple $(\clk_1, P_1 \pi P_1, V)$. Consequently, the CP sub-minimal dilation of the instrument $\cli$ is described by the quadruple $(\clk_1, P_1 \pi P_1, P_1 E P_1, V)$.
	
	\begin{thm}\label{thm : equivalence of bi and sub-minimal dilation}
		Let $\cli: \ox \to CP(\cla, \bh)$ be a UCP instrument with minimal bi-dilation quadruple $(\clk, \pi, E, V)$, and let $P_1$ denote the orthogonal projection onto the subspace $\clk_1:= [\pi(\cla)V(\clh)]\\ \subseteq \clk$. Suppose that the associated UCP map $\phi_\cli$ is extreme and the associated POVM marginal $\mu_\cli$ is spectral. Then the compression $P_1 E P_1$ is a spectral measure. Consequently, the CP sub-minimal dilation coincides with the minimal bi-dilation of $\cli$.

	\end{thm}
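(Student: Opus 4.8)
The plan is to carry out the entire argument inside the minimal bi-dilation $(\clk,\pi,E,V)$ (here $V$ is an isometry, since $\cli$ is unital) and then transfer the problem to the minimal Stinespring dilation of $\phi_\cli$, where Arveson's extreme-point criterion can be invoked. First I would record what Section~\ref{Relation between Sub-minimal dilation and minimal Bi-dilation} already supplies: $\clk_1=[\pi(\cla)V(\clh)]$ is invariant under the $*$-closed set $\pi(\cla)$, hence reducing, so $P_1\in\pi(\cla)'$; the triple $(\clk_1,\rho,V)$ with $\rho:=\pi(\cdot)|_{\clk_1}$ is the minimal Stinespring dilation of $\phi_\cli=V^*\pi(\cdot)V$; and the POVM occurring in the CP sub-minimal dilation of $\cli$ is $\mu(A):=P_1E(A)P_1\in\rho(\cla)'$, a normalized POVM on $\clk_1$ with $0\le\mu(A)\le I_{\clk_1}$ and $\mu(X)=P_1=I_{\clk_1}$. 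Since $V(\clh)\subseteq\clk_1$ we also have $P_1V=V$ and $V^*P_1=V^*$, so in particular $VV^*\le P_1\le I_\clk$.

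The heart of the argument, and the step where the hypothesis on $\mu_\cli$ is used, is a comparison of $V^*\mu(A)V$ and $V^*\mu(A)^2V$ with $\mu_\cli$. On one side $V^*\mu(A)V=V^*E(A)V=\mu_\cli(A)$. On the other side, using $P_1V=V$ and $V^*P_1=V^*$,
\[
V^*\mu(A)^2V=V^*P_1E(A)P_1E(A)P_1V=V^*E(A)P_1E(A)V .
\]
Because $E(A)$ is a projection, the inequalities $E(A)VV^*E(A)\le E(A)P_1E(A)\le E(A)$ hold; compressing by $V^*(\cdot)V$ gives
\[
\mu_\cli(A)^2=V^*E(A)VV^*E(A)V\ \le\ V^*E(A)P_1E(A)V\ \le\ V^*E(A)V=\mu_\cli(A).
\]
Since $\mu_\cli$ is spectral, $\mu_\cli(A)^2=\mu_\cli(A)$, so equality holds throughout and $V^*\mu(A)^2V=V^*\mu(A)V$. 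Thus $D:=\mu(A)-\mu(A)^2$ is a positive element of the von Neumann algebra $\rho(\cla)'$ with $V^*DV=0$. Since $\phi_\cli$ is extreme, Arveson's criterion (Theorem~1.4.6 of \cite{arvesonsubalgebra}) says that $T\mapsto V^*TV$ is injective on $\rho(\cla)'$, so $D=0$; that is, $\mu(A)=P_1E(A)P_1$ is a projection for every $A\in\ox$. A countably additive projection-valued set function is automatically a spectral measure (orthogonality of the projections on disjoint sets follows from the fact that a sum of two projections is a projection, and the identity $E'(A)E'(B)=E'(A\cap B)$ then follows by additivity), so $P_1EP_1$ is a spectral measure on $\clk_1$. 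This settles the first assertion.

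For the final ``coincidence'' claim I would upgrade ``$P_1E(A)P_1$ is a projection'' to ``$P_1$ commutes with $E(A)$''. Writing $S:=P_1E(A)$, one has $SS^*=P_1E(A)P_1$, so this being a projection forces $S$ to be a partial isometry, i.e. $SS^*S=S$, which reads $P_1E(A)P_1E(A)=P_1E(A)$; rewriting this as $(P_1E(A)(I-P_1))(P_1E(A)(I-P_1))^*=P_1E(A)(I-P_1)E(A)P_1=0$ gives $P_1E(A)=P_1E(A)P_1$, and the adjoint identity $E(A)P_1=P_1E(A)P_1$, so $P_1\in E(\ox)'$. Together with $P_1\in\pi(\cla)'$, the subspace $\clk_1=\ran P_1$ reduces the whole bi-dilation and contains $V(\clh)$, so minimality $[\pi(\cla)E(\ox)V(\clh)]=\clk$ forces $\clk_1=\clk$, i.e. $P_1=I$, $\rho=\pi$, $P_1EP_1=E$; hence the CP sub-minimal dilation $(\clk_1,\rho,P_1EP_1,V)$ literally coincides with the minimal bi-dilation.

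I expect the main obstacle to be the middle paragraph: recognizing that the sandwich inequality pins down $V^*\mu(A)^2V$ exactly when $\mu_\cli$ is spectral, so that $\mu(A)-\mu(A)^2$ is annihilated by the Stinespring compression and Arveson's injectivity criterion applies. The passage from ``$P_1E(A)P_1$ is a projection'' to ``$P_1$ commutes with $E(A)$'' is a short operator-algebraic computation, and the reduction $\clk_1=\clk$ is then automatic from minimality.
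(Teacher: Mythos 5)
Your proof is correct, and its overall skeleton matches the paper's: identify the CP sub-minimal data $(\clk_1,\rho,P_1EP_1,V)$, use spectrality of $\mu_\cli$ to force $V^*\bigl(P_1E(A)P_1\bigr)V=V^*\bigl(P_1E(A)P_1\bigr)^2V$, and then invoke extremality of $\phi_\cli$ (Arveson's injectivity of $T\mapsto V^*TV$ on $\rho(\cla)'$) to conclude that each $P_1E(A)P_1$ is a projection. The two points where you genuinely diverge are both to your credit. First, the paper obtains the key identity by first passing through decomposability: spectrality of $\mu_\cli$ gives decomposability of $\cli$ via Corollary \ref{decomposable-sufficientcondition}, and then Theorem \ref{decomposibleequivalent-1} supplies the relation $P_1E(A)P_1VV^*=VV^*P_1E(A)P_1VV^*=VV^*P_1E(A)P_1$, from which $\mu_\cli(A)^2=V^*(P_1E(A)P_1)^2V$ follows. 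Your squeeze $\mu_\cli(A)^2\le V^*E(A)P_1E(A)V\le\mu_\cli(A)$, pinched by $\mu_\cli(A)^2=\mu_\cli(A)$, reaches the same identity directly from $VV^*\le P_1\le I_{\clk}$ without any appeal to the decomposability machinery — a more self-contained route. Second, for the final claim the paper stops at ``$(\clk_1,P_1\pi P_1,P_1EP_1,V)$ is a minimal bi-dilation, hence coincides with $(\clk,\pi,E,V)$ by uniqueness,'' whereas you upgrade ``$P_1E(A)P_1$ is a projection'' to ``$P_1$ commutes with every $E(A)$'' via the partial-isometry computation, and then minimality $[\pi(\cla)E(\ox)V\clh]=\clk$ forces $P_1=I_\clk$; this yields the coincidence literally rather than up to unitary equivalence, and is a slightly sharper conclusion. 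All the individual steps (positivity of $\mu(A)-\mu(A)^2$, its membership in $\rho(\cla)'$, and the passage from projection-valued countably additive set function to spectral measure) check out.
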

	
	\begin{proof}
		Since the POVM marginal $\mu_\cli$ is spectral, Corollary~\ref{decomposable-sufficientcondition} together with Theorem~\ref{decomposibleequivalent-1} implies that
\begin{eqnarray}\label{decomposibleeqn1}
P_1E(A)P_1VV^*=VV^*P_1E(A)P_1VV^*=VV^*P_1E(A)P_1, \text{for all}~ A\in\ox,
		\end{eqnarray}
		Recall that $\mu_\cli = V^* P_1 E P_1 V$. The spectrality of $\mu_\cli$, in conjunction with \eqref{decomposibleeqn1}, leads to the equivalence:
		\begin{eqnarray*}			\mu_\cli(A)=\mu_\cli(A)^2&\iff&
			V^*P_1 E(A) P_1 V=	V^*P_1 E(A) P_1 V	V^*P_1 E(A) P_1 V\\&\iff&
			V^*P_1 E(A) P_1 V= V^*(P_1 E(A) P_1)^2 V,~\forall~A\in\clo(X).
		\end{eqnarray*}
		Since, the associated UCP map $\phi_\cli$ is extreme, it follows that the map: $D\mapsto V^*P_1DP_1V$, is injective on the von Neumann algebra $\{ P_1 \pi(\cla) P_1 \}'$. Therefore, the equality above implies that $P_1 E(A) P_1$ is a projection for each $A \in \ox$, i.e., the POVM $P_1 E P_1$ is spectral. Thus, the quadruple $(\clk_1, P_1 \pi P_1, P_1 E P_1, V)$ is a bi-dilation of the instrument $\cli$. Since this is already a sub-minimal dilation, it follows that it is also minimal as a bi-dilation.
	\end{proof}
	Having established  the preparatory results, we are in a position to present some principal contributions of this paper.
	\begin{thm}\label{marginalconvexity}

Let $\mathcal I:\mathcal O(X) \to CP(\mathcal A, \mathcal B(\mathcal H))$ be a UCP instrument such that its POVM marginal $\mu_{\mathcal I}$ is spectral and its CP marginal admits the decomposition
\[
\phi_{\mathcal I} = \oplus_{i=1}^{\ell}\, \psi_i \otimes 1_{\mathbb{C}^{n_i}},
\]
where $\psi_i:\mathcal A \to \mathcal B(\mathcal H_i)$, $i=1,\dots,\ell$, are pure unital completely positive maps with $\dim \mathcal H_i < \infty$.  
Assume that each $\psi_i$ admits a minimal Stinespring dilation $(\mathcal H_\pi,\pi,V_i)$ with respect to a fixed irreducible representation $\pi$ of $\mathcal A$, and that the family of subspaces $\{\operatorname{range}(V_i)\}_{i=1}^\ell$ forms a nest in $\mathcal H_\pi$. Then $\mathcal I$ is $C^*$-extreme.
	
	\end{thm}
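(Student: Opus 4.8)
The plan is to make the minimal bi-dilation of $\cli$ explicit, to peel $\cli$ apart into a direct sum of pure instruments carrying the given nest, and then to invoke Theorem~\ref{direct sum of pure instruments}. Since $\mu_\cli$ is spectral, Corollary~\ref{decomposable-sufficientcondition} shows that $\cli$ is decomposable, so $\cli(A,a)=\phi_\cli(a)\mu_\cli(A)=\mu_\cli(A)\phi_\cli(a)$ and $\phi_\cli(\cla)$ commutes with $\mu_\cli(\ox)$. The hypothesis on $\phi_\cli$ is precisely the Farenick--Zhou form of a $C^*$-extreme UCP map in finite dimensions \cite{farenickpams} (it is the CP-map instance of Theorem~\ref{direct sum of pure instruments}); in particular $\phi_\cli$ is extreme, and its minimal Stinespring dilation is $(\clh_\pi\otimes\mathbb{C}^{N},\ \pi\otimes 1_{\mathbb{C}^N},\ W)$ with $N=\sum_i n_i$ and $W:=\bigoplus_i V_i\otimes 1_{\mathbb{C}^{n_i}}$ an isometry. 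With $\phi_\cli$ extreme and $\mu_\cli$ spectral, Theorem~\ref{thm : equivalence of bi and sub-minimal dilation} applies: the minimal bi-dilation of $\cli$ coincides with its CP sub-minimal dilation, hence equals $(\clh_\pi\otimes\mathbb{C}^N,\ \pi\otimes 1_{\mathbb{C}^N},\ \widetilde E,\ W)$ for a projection-valued measure $\widetilde E$ commuting with $\pi\otimes 1_{\mathbb{C}^N}$. Since $\pi$ is irreducible, $\{(\pi\otimes 1_{\mathbb{C}^N})(\cla)\}'=1_{\clh_\pi}\otimes M_N$, so $\widetilde E=1_{\clh_\pi}\otimes F$ for a spectral measure $F\colon\ox\to M_N$.

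The decisive step is to prove that $F$ is block-diagonal with respect to $\mathbb{C}^N=\bigoplus_i\mathbb{C}^{n_i}$. First reduce, harmlessly, to the case where the subspaces $V_i\clh_i$ are pairwise distinct: equal ranges force $\psi_i$ and $\psi_j$ to be unitarily equivalent, so such summands can be merged. Next, $[(\pi\otimes 1_{\mathbb{C}^N})(\cla)W\clh]$ is all of $\clh_\pi\otimes\mathbb{C}^N$, so the projection $P_1$ occurring in Theorem~\ref{decomposibleequivalent-1} is the identity, and decomposability makes the first and last expressions in condition~(ii) of that theorem coincide, giving $(1\otimes F(A))\,WW^{*}=WW^{*}(1\otimes F(A))$ for all $A\in\ox$. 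Writing $WW^{*}=\bigoplus_i(V_iV_i^{*})\otimes 1_{\mathbb{C}^{n_i}}$ and comparing $(i,j)$-blocks yields $(V_iV_i^{*}-V_jV_j^{*})\otimes F(A)_{ij}=0$, whence $F(A)_{ij}=0$ for $i\neq j$; that is, $F=\bigoplus_i F_i$ with each $F_i\colon\ox\to M_{n_i}$ a spectral measure. Consequently $\clh=\bigoplus_i(\clh_i\otimes\mathbb{C}^{n_i})$ reduces $\cli$, and $\cli=\bigoplus_i\cli_i$ where $\cli_i(A,a)=\psi_i(a)\otimes F_i(A)$ and $\mu_\cli=\bigoplus_i 1_{\clh_i}\otimes F_i$.

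It remains to decompose further and to recognise the nest. In finite dimensions $\mu_\cli$ has only finitely many atoms $S_1,\dots,S_p$; the projections $Q_m:=\mu_\cli(S_m)=\bigoplus_i 1_{\clh_i}\otimes F_i(S_m)$ are mutually orthogonal, sum to $1_\clh$, and respect the $i$-decomposition, and $Q_m\cli Q_m=\bigoplus_i\bigl(\cli_i^{(m)}\otimes 1_{F_i(S_m)\mathbb{C}^{n_i}}\bigr)$, where $\cli_i^{(m)}$ is the instrument on $\clh_i$ concentrated on $S_m$ with CP marginal $\psi_i$ --- a pure instrument because $\psi_i$ is a pure CP map, with minimal bi-dilation $(\clh_\pi,\pi,E^{(m)},V_i)$ for the spectral instrument ``$\pi$ concentrated on $S_m$''. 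Thus $\cli$ is unitarily equivalent to a direct sum of pure UCP instruments; for $m\neq m'$ the spectral instruments ``$\pi$ concentrated on $S_m$'' and ``$\pi$ concentrated on $S_{m'}$'' are disjoint, and for a fixed $m$ the $\cli_i^{(m)}$ (over the $i$ with $F_i(S_m)\neq 0$) all dilate through the irreducible $\pi$, are pairwise non-unitarily equivalent since the ranges $V_i\clh_i$ have distinct dimensions, and those ranges form a nest. Hence both hypotheses of Theorem~\ref{direct sum of pure instruments} hold --- condition~(1) is the assumed nest property restricted to each index set, and condition~(2) is automatic since every index set is finite --- and therefore $\cli$ is $C^*$-extreme. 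The one real obstacle is the block-diagonality of $F$ in the second paragraph: this is the single place where decomposability, the explicit bi-dilation from Theorem~\ref{thm : equivalence of bi and sub-minimal dilation}, the irreducibility of $\pi$, and the nest hypothesis must all be used together, and after it the passage to pure instruments is just bookkeeping.
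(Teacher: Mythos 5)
Your proposal is correct and follows essentially the same route as the paper's proof: both pass through the $C^*$-extremity of $\phi_\cli$, identify the minimal bi-dilation with the CP sub-minimal dilation via Theorem~\ref{thm : equivalence of bi and sub-minimal dilation}, read off the spectral measure from the commutant $1_{\clh_\pi}\otimes M_N$, decompose $\cli$ into pure summands over the fixed irreducible $\pi$, and conclude with the characterization of $C^*$-extreme instruments (Theorems~\ref{direct sum of pure instruments}/\ref{characterization of $\cst$-extreme instruments}). Your explicit block-diagonality argument for $F$ (via decomposability and commutation with $WW^*$) carefully justifies a step the paper dispatches with a ``without loss of generality''.
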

	\begin{proof}
		We begin by establishing the necessary notations. Let
		$\clk=\oplus_{i=1}^{\ell}(\clh_\pi\otimes \bbc^{n_i}),$ and consider the representation 
		$ \rho: \cla \to \clb(\clk)$,    given by $$ \rho(a)=\oplus_{i=1}^{\ell}(\pi(a)\otimes 1_{\bbc^{n_i}}),~~\forall a \in \cla,$$
		where $\pi$ is a fixed irreducible representation of $\cla.$
		Define the isometry, $V:\clh\to\clk $ by $V=\oplus_{i=1}^{\ell}(V_i \otimes 1_{\bbc^{n_i}}).$ It is evident that the triple $(\clk,\rho,  V)$ forms a minimal Stinespring dilation for the UCP map $\phi_\cli$. By Theorem \ref{characterization of $\cst$-extreme instruments} for UCP maps we can conclude that $\phi_\cli$
		is $\cst$-extreme. Since $\clh$ is finite-dimensional and $\phi_\cli$ is $C^*$-extreme, it is also classically extreme. Since $\cli$ satisfies the hypothesis of Theorem \ref{thm : equivalence of bi and sub-minimal dilation}, we conclude that the CP sub-minimal dilation of $\cli$ coincides with its minimal Naimark dilation, which is given by the quadruple $(\rho, E, V, \clk)$, where $E:\clo(X) \to \rho(\cla)'\subset \clb(\clk)$
		is a spectral measure.
		Since $\pi$ is irreducible, we have $\pi(\cla)' = \bbc \cdot 1_{\clh_\pi}$, and therefore,
		\[\rho(\cla)^ \prime=\{1_{\clh_\pi}\otimes A: A \in M_m(\bbc), ~~m=\sum_{i=1}^{\ell}n_i\}.\]
		As $E$ is a spectral measure and takes values in $\rho(\cla)'$, with out loss of generality it decomposes as $E=\oplus_{i=1}^m E_i,$ where each $E_i$ is a trivial spectral measure on $X$ with values in $\clb(\clh_\pi)$. Accordingly, the instrument $\cli$ can be expressed as, \[\cli=\oplus_{i=1}^mW_i^*\pi_iE_iW_i,\] where each $\pi_i = \pi$, and the isometries $W_i$ are drawn from $\{ V_1, \dots, V_l \}$ with the common dilation space $\clk$. It follows immediately that the instruments $\{ \pi_i E_i \}_{i=1}^m$ are irreducible. Therefore, by invoking Theorem \ref{characterization of $\cst$-extreme instruments}, we conclude that $\cli$ is indeed $C^*$-extreme.

	\end{proof}

	Combining Proposition \ref{mutually disjoint directsum} with Theorem \ref{marginalconvexity}, we conclude that in finite dimensions the $C^*$-convexity of the marginals of an instrument ensures that the instrument itself is $C^*$-extreme. Moreover, under additional assumptions, the converse implications can also be established.

\begin{thm}\label{Final}
Let $\cli : \ox \to CP(\cla,\bh)$ be a UCP instrument, where $\mathcal{H}$ is finite dimensional.  
Then the marginals $\mu_\cli$ and $\phi_\cli$ are $\cst$-extreme if and only if:
\begin{enumerate}
    \item $\cli$ is $\cst$-extreme; and
    \item if $\cli_1, \cli_2 : \ox \to CP(\cla,\bh)$ dominated by $\cli$ are two pure instruments such that  $\phi_{\cli_1}$ and $\phi_{\cli_2}$ dilate to a common  irreducible representation then one of them  is a compression of the other.
\end{enumerate}
\end{thm}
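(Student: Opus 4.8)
The plan is to establish the two implications separately. For the reverse direction---assuming (1) and (2)---I would first note that, since $\clh$ is finite-dimensional, Theorem \ref{thm: POVM marginal of $\cst$-extreme instruments are spectral in f.d.} applied to the $C^*$-extreme $\cli$ shows that $\mu_\cli$ is spectral, hence $C^*$-extreme (this uses only (1)). To handle $\phi_\cli$, I would invoke Theorem \ref{characterization of $\cst$-extreme instruments} to write $\cli$, up to unitary equivalence, as $\bigoplus_{i=1}^{m}\bigoplus_{j=1}^{n_i}\cli^i_j$, where the $\pi_iE_i$ are mutually disjoint irreducible spectral instruments and $\cli^i_j$ is a compression of $\pi_iE_i$ by an isometry $W^i_j$. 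Each $\cli^i_j$, viewed (after padding by $0$) as an instrument into $CP(\cla,\bh)$, is dominated by $\cli$ and is pure (a compression of a pure instrument, Remark \ref{rmk: POVM marginal of compression pure instrument is trivial}); and $\phi_{\cli^i_j}(a)=(W^i_j)^*\pi_i(a)W^i_j$ has minimal Stinespring representation $\pi_i$ by irreducibility, which is irreducible. Thus for fixed $i$ any two of the $\cli^i_j$ satisfy the hypothesis of (2), so one is a compression of the other; by uniqueness of the minimal Stinespring dilation and $\pi_i(\cla)'=\mathbb{C}$, this forces $\{\text{range}(W^i_j)\}_{j}$ to be a nest in $\clh_{\pi_i}$. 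Feeding this into the UCP-map case of Theorem \ref{characterization of $\cst$-extreme instruments} (the Farenick--Zhou theorem) gives that $\phi_\cli=\bigoplus_{i,j}\phi_{\cli^i_j}$ is $C^*$-extreme.

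For the forward direction, assume $\mu_\cli$ and $\phi_\cli$ are $C^*$-extreme. Finite-dimensionality together with Theorem \ref{thm: POVM marginal of $\cst$-extreme instruments are spectral in f.d.} (applied to the instrument associated with $\mu_\cli$) shows $\mu_\cli$ is spectral, and the Farenick--Zhou form of $\phi_\cli$ splits $\clh$ along the disjointness classes of the irreducible representations occurring in $\phi_\cli$. Since $\mu_\cli\in\phi_\cli(\cla)'$ is spectral it respects this splitting, so $\cli=\bigoplus_\alpha\cli^{(\alpha)}$, where each $\cli^{(\alpha)}$ has spectral POVM marginal and CP marginal of the form $\bigoplus_i\psi^i_\alpha\otimes 1$ with $\psi^i_\alpha$ pure dilating to the single irreducible $\pi_\alpha$ and with nested Stinespring ranges; Theorem \ref{marginalconvexity} then shows each $\cli^{(\alpha)}$ is $C^*$-extreme, and since the $\cli^{(\alpha)}$ are mutually disjoint, Proposition \ref{mutually disjoint directsum} yields (1).

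It remains to prove (2). Since $\mu_\cli$ is spectral, $\cli$ is decomposable (Corollary \ref{decomposable-sufficientcondition}); since $\phi_\cli$ is moreover classically extreme in finite dimensions, its CP sub-minimal dilation coincides with its minimal bi-dilation (Theorem \ref{thm : equivalence of bi and sub-minimal dilation}). Given pure $\cli_1,\cli_2\leq\cli$ whose CP marginals dilate to a common irreducible $\pi$, the Radon--Nikodym theorem (Theorem \ref{radonnikodymthm}) writes $\cli_i(A,a)=V^*D_i\pi_\cli(a)E_\cli(A)V$ with $D_i$ a positive contraction in $\{\pi_\cli(\cla)E_\cli(\ox)\}'$; purity of $\cli_i$ together with the common-representation hypothesis forces $\cli_i$ into the $\pi$-disjointness block and $D_i$ to be a scalar times a one-dimensional projection $|\xi_i\rangle\langle\xi_i|$, with $\xi_i$ an eigenvector of the multiplicity spectral measure. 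Comparing these data with the nest of Stinespring ranges produced by the $C^*$-extremality of $\phi_\cli$, I would show that the Stinespring isometry of $\phi_{\cli_i}$ has range equal to a nest member $R_{k_i}\subseteq\clh_\pi$; the $R_k$ being totally ordered, $R_{k_1}$ and $R_{k_2}$ are comparable, and I would upgrade this inclusion to one of $\cli_1,\cli_2$ being a compression of the other.

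The hard part is this last step. Turning the Radon--Nikodym parametrisation of a pure sub-instrument of a decomposable $C^*$-extreme instrument into the nest language of the Farenick--Zhou decomposition of $\phi_\cli$, and---crucially---checking that an inclusion of Stinespring ranges of the CP marginals genuinely yields a compression of the \emph{instruments} $\cli_1$ and $\cli_2$ (which requires their ``points of concentration'', dictated by the multiplicity spectral measure, to be matched up rather than merely their CP marginals), is where the real work lies. All remaining steps are applications of results already in hand: Theorems \ref{characterization of $\cst$-extreme instruments}, \ref{marginalconvexity}, \ref{thm: POVM marginal of $\cst$-extreme instruments are spectral in f.d.}, \ref{thm : equivalence of bi and sub-minimal dilation}, \ref{radonnikodymthm}, Proposition \ref{mutually disjoint directsum}, and Corollary \ref{decomposable-sufficientcondition}.
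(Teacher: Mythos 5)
Your overall strategy tracks the paper's (same decompositions, same auxiliary results), but there are two genuine gaps. The more serious one is in the direction $(1)\,\&\,(2)\Rightarrow\phi_\cli$ is $C^*$-extreme. You apply condition (2) only to pairs $\cli^i_j,\cli^i_{j'}$ \emph{with the same index} $i$; but for fixed $i$ the family $\{\cli^i_j\}_j$ is already nested by Theorem \ref{characterization of $\cst$-extreme instruments}, so this application of (2) yields nothing new. The case that actually requires (2) is $i\neq i'$ with $\pi_i$ unitarily equivalent to $\pi_{i'}$: the characterization theorem only guarantees that the spectral \emph{instruments} $\pi_iE_i$ are mutually disjoint, which is compatible with $\pi_i\cong\pi_{i'}$ as long as $E_i$ and $E_{i'}$ differ (e.g.\ two copies of one irreducible $\pi$ with $E_1$ concentrated on $\{1\}$ and $E_2$ on $\{2\}$). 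Then $\phi_\cli$ contains $V_{ij}^*\pi V_{ij}$ and $V_{i'j'}^*\pi V_{i'j'}$ with the \emph{same} irreducible $\pi$, and Farenick--Zhou demands that the ranges of $V_{ij}$ and $V_{i'j'}$ be comparable --- which your argument never produces. The paper handles precisely this by regrouping the representations into equivalence classes (choosing $\{\pi_{i'}\}\subseteq\{\pi_i\}$ pairwise disjoint and $\{W_{i'j'}\}\subseteq\{V_{ij}\}$) and invoking (2) for the cross-block pairs.

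The second gap is one you flag yourself: in the forward direction your proof of (2) stops at ``I would upgrade this inclusion to one of $\cli_1,\cli_2$ being a compression of the other,'' which is exactly the step that needs to be supplied. The paper's route is shorter: it first observes that a pure instrument dominated by a direct sum of pure instruments must be a scalar multiple of a single summand, so that $\phi_{\cli_1}$ and $\phi_{\cli_2}$ are multiples of $V_{mj_1}^*\pi_m V_{mj_1}$ and $V_{mj_2}^*\pi_m V_{mj_2}$ occurring in the Farenick--Zhou decomposition of $\phi_\cli$, and nestedness of the $V_{mj}$ then gives comparability. (Your remark that passing from comparability of the CP marginals to a compression of the \emph{instruments} requires matching the spectral data is a fair criticism --- the paper is terse on this very point --- but your sketch does not supply the missing argument either.) As written, neither direction of your proposal is complete.
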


\begin{proof}
\noindent
$(\implies)$ In the forward direction, the $\cst$-extremity of $\cli$ follows from Proposition \ref{mutually disjoint directsum} along with the  Theorem \ref{marginalconvexity}.  
To prove (2), let $\clj' : \ox \to CP(\cla,\bh)$ be a pure instrument with minimal bi-dilation $(\clk_{\clj'},\pi_{\clj'},E_{\clj'},V_{\clj'})$, and let $\clj : \ox \to CP(\cla,\bh)$ be the direct sum of pure instruments such that $\clj' \leq \clj$, where
$\clj = \oplus_{i=1}^n V_i^* \pi_i E_i V_i.
$
Then necessarily $\clj' = t V_\ell^* \pi_\ell E_\ell V_\ell$ for some $\ell \in \{1,\dots,n\}$ and $t \in [0,1]$.  

Since $\phi_\cli$ is $\cst$-extreme, by Theorem~\ref{characterization of $\cst$-extreme instruments} for UCP maps, there exists a family of disjoint irreducible representations $\{\pi_i\}_{i=1}^n$ together with a nested sequence of pure CP maps $\{V_{ij}^* \pi_i V_{ij}\}_{j=1}^{n_i}$ for each $i$, such that
\[
   \phi_\cli \;=\; \oplus_i \oplus_j V_{ij}^* \pi_i V_{ij}.
\]

Let $\cli_1, \cli_2 : \ox \to CP(\cla,\bh)$ be pure instruments dominated by $\cli$ such that $\phi_{\cli_1}$ and $\phi_{\cli_2}$ dilate to the same irreducible representation $\pi : \cla \to \bk$. Then $\phi_{\cli_i} \leq \phi_\cli$ for $i=1,2$. From the discussion in the beginning it follows that $\pi = \pi_m$ for some $m \in \{1,\dots,n\}$, and moreover
\[
  \phi_{\cli_1} = t_1 V_{m j_1}^* \pi_m V_{m j_1}, 
  \quad 
  \phi_{\cli_2} = t_2 V_{m j_2}^* \pi_m V_{m j_2},
  \quad t_i \in [0,1].
\]
Since the family $\{ V_{ij}^* V_{ij} \}$ is nested for each $i$, it follows that $\phi_{\cli_1}$ and $\phi_{\cli_2}$ are compressions of either one, establishing condition (2).  


$(\impliedby)$ For the converse, the $\cst$-extremity of the POVM marginal $\mu_\cli$ follows from Theorem~\ref{thm: POVM marginal of $\cst$-extreme instruments are spectral in f.d.}. Since $\cli$ itself is $\cst$-extreme, by Theorem~\ref{characterization of $\cst$-extreme instruments} we have  $\cli \;=\; \oplus_i \oplus_j V_{ij}^* \pi_i E_i V_{ij},$
where $\{\pi_i E_i\}_{i=1}^n$ are disjoint irreducible instruments, and $\{V_{ij}^* \pi_i E_i V_{ij}\}$ is a nested sequence of pure instruments for each $i$.  
Thus,  $\phi_\cli \;=\; \oplus_i \oplus_j V_{ij}^* \pi_i V_{ij}.$

To show that $\phi_\cli$ is $\cst$-extreme, it suffices to prove that
\[
  \phi_\cli \;=\; \oplus_{i'} \oplus_{j'} W_{i'j'}^* \pi_{i'} W_{i'j'},
\]
where $\{\pi_{i'}\}_{i'=1}^m \subseteq \{\pi_i\}_{i=1}^n$ are disjoint irreducible representations and for each $i'$, the family $\{W_{i'j'} W_{i'j'}^*\}$ is nested, with $\{W_{i'j'}\} \subseteq \{V_{ij}\}$.  
The disjointness of $\{\pi_{i'}\}$ follows directly, and condition (2) guarantees the nesting property. Hence, $\phi_\cli$ is $\cst$-extreme. \end{proof}

\section*{Acknowledgments}
Bhat gratefully acknowledges funding from  ANRF  (India) through J C
Bose Fellowship No. JBR/2021/000024.	
	\bibliography{references}

\begin{thebibliography}{10}

\bibitem{arvesonsubalgebra}
William~B. Arveson.
\newblock Subalgebras of {$C\sp{\ast} $}-algebras.
\newblock {\em Acta Math.}, 123:141--224, 1969.

\bibitem{manishcmp}
Tathagata Banerjee, BV~Rajarama Bhat, and Manish Kumar.
\newblock {$C^\ast$}-extreme points of positive operator valued measures and unital completely positive maps.
\newblock {\em Communications in Mathematical Physics}, 388(3):1235--1280, 2021.

\bibitem{manishjfa}
B.~V.~Rajarama Bhat and Manish Kumar.
\newblock {$C^\ast$}-extreme maps and nests.
\newblock {\em J. Funct. Anal.}, 282(8):Paper No. 109397, 40, 2022.

\bibitem{bhat_manish_Lattices_of_Logmodular_algebra}
B.~V.~Rajarama Bhat and Manish Kumar.
\newblock Lattices of logmodular algebras.
\newblock {\em Publ. Res. Inst. Math. Sci.}, 60(3):507--537, 2024.

\bibitem{Chiribella_and_Ariano}
Giulio Chiribella, Giacomo~Mauro D'Ariano, and Paolo Perinotti.
\newblock Realization schemes for quantum instruments in finite dimensions.
\newblock {\em J. Math. Phys.}, 50(4):042101, 19, 2009.

\bibitem{Choi}
Man~Duen Choi.
\newblock Completely positive linear maps on complex matrices.
\newblock {\em Linear Algebra Appl.}, 10:285--290, 1975.

\bibitem{Ariano}
Giacomo~Mauro D'Ariano, Paolo Perinotti, and Michal Sedl\'ak.
\newblock Extremal quantum protocols.
\newblock {\em J. Math. Phys.}, 52(8):082202, 16, 2011.

\bibitem{daviesquantum}
E~B Davies.
\newblock {\em Quantum Theory of Open Systems}.
\newblock Academic Press, London, 1976.

\bibitem{davies}
E.~B. Davies and J.~T. Lewis.
\newblock An operational approach to quantum probability.
\newblock {\em Comm. Math. Phys.}, 17:239--260, 1970.

\bibitem{Douglas_Plosker_and_Smith}
Douglas Farenick, Sarah Plosker, and Jerrod Smith.
\newblock Classical and nonclassical randomness in quantum measurements.
\newblock {\em J. Math. Phys.}, 52(12):122204, 26, 2011.

\bibitem{farenickpams}
Douglas~R. Farenick and Hongding Zhou.
\newblock The structure of {$C^*$}-extreme points in spaces of completely positive linear maps on {$C^*$}-algebras.
\newblock {\em Proc. Amer. Math. Soc.}, 126(5):1467--1477, 1998.

\bibitem{pellonpaapieces}
Erkka Haapasalo, Teiko Heinosaari, and Juha-Pekka Pellonp\"{a}\"{a}.
\newblock When do pieces determine the whole? {E}xtreme marginals of a completely positive map.
\newblock {\em Rev. Math. Phys.}, 26(2):1450002, 16, 2014.

\bibitem{Heinosaari_Teiko_and_Pellonpa}
Teiko Heinosaari and Juha-Pekka Pellonp\"a\"a.
\newblock Extreme commutative quantum observables are sharp.
\newblock {\em J. Phys. A}, 44(31):315303, 4, 2011.

\bibitem{holevo}
A.~S. Holevo.
\newblock Radon-{N}ikod\'{y}m derivatives of quantum instruments.
\newblock {\em J. Math. Phys.}, 39(3):1373--1387, 1998.

\bibitem{johnson}
Roy~A. Johnson.
\newblock Atomic and nonatomic measures.
\newblock {\em Proc. Amer. Math. Soc.}, 25:650--655, 1970.

\bibitem{plosker_ramsey_povmintegratiion_2}
Darian McLaren, Sarah Plosker, and Christopher Ramsey.
\newblock On operator valued measures.
\newblock {\em Houston J. Math.}, 46(1):201--226, 2020.

\bibitem{neumark}
M.~A. Neumark.
\newblock On a representation of additive operator set functions.
\newblock {\em C. R. (Doklady) Acad. Sci. URSS (N.S.)}, 41:359--361, 1943.

\bibitem{ozawa}
Masanao Ozawa.
\newblock Quantum measuring processes of continuous observables.
\newblock {\em J. Math. Phys.}, 25(1):79--87, 1984.

\bibitem{paulsen_book}
Vern Paulsen.
\newblock {\em Completely bounded maps and operator algebras}, volume~78 of {\em Cambridge Studies in Advanced Mathematics}.
\newblock Cambridge University Press, Cambridge, 2002.

\bibitem{pellonapaa1}
Juha-Pekka Pellonp\"{a}\"{a}.
\newblock Quantum instruments: {I}. {E}xtreme instruments.
\newblock {\em J. Phys. A}, 46(2):025302, 16, 2013.

\bibitem{pellonapaa2}
Juha-Pekka Pellonp\"{a}\"{a}.
\newblock Quantum instruments: {II}. {M}easurement theory.
\newblock {\em J. Phys. A}, 46(2):025303, 15, 2013.

\bibitem{pisiernormal}
Gilles Pisier.
\newblock {\em Tensor Products of C*-Algebras and Operator Spaces: The Connes--Kirchberg Problem}, volume~96.
\newblock Cambridge University Press, 2020.

\bibitem{Srinivas}
M.~D. Srinivas.
\newblock Collapse postulate for observables with continuous spectra.
\newblock {\em Commun. Math. Phys.}, 71:131--158, 1980.

\bibitem{stinespring}
W.~Forrest Stinespring.
\newblock Positive functions on {$C^*$}-algebras.
\newblock {\em Proc. Amer. Math. Soc.}, 6:211--216, 1955.

\end{thebibliography}
	\bibliographystyle{plain}

\end{document}